\documentclass[11pt,reqno]{amsart}

 \usepackage{amsfonts,amsmath,amscd,amssymb,amsbsy,amsthm,amstext,amsopn,amsxtra}
 \usepackage{color,fullpage,mathrsfs}
 \usepackage{subfigure}
 \usepackage{verbatim} 
 \usepackage{stmaryrd}
 \usepackage{extarrows}
 \usepackage[all]{xy}
 \usepackage{bm}   
 \usepackage{hyperref}
 \usepackage{tikz}
 \usepackage{tikz-cd}
 \usepackage{enumitem}
 \usetikzlibrary{calc}

\usepackage[normalem]{ulem} 

\usepackage{cancel}
\usepackage{color}
\usepackage{tikz}

 \numberwithin{equation}{section}
 \hypersetup{colorlinks,linkcolor=[rgb]{0,0,1},citecolor=[rgb]{1,0,0}}

\newtheorem{theorem}{Theorem}[section]
\newtheorem{lemma}[theorem]{Lemma}
\newtheorem{proposition}[theorem]{Proposition}
\newtheorem{corollary}[theorem]{Corollary}
\newtheorem{conjecture}[theorem]{Conjecture}
\newtheorem{assumption}[theorem]{Assumption}

\theoremstyle{definition}

\newtheorem{example}[theorem]{Example}

\newtheorem{remark}[theorem]{Remark}

\newcommand{\one}{\ensuremath{(\mathrm{i})}}
\newcommand{\two}{\ensuremath{(\mathrm{ii})}}

\newcommand{\CC}{\ensuremath{\mathbb{C}}} 

\newcommand{\QQ}{\ensuremath{\mathbb{Q}}}

\newcommand{\ZZ}{\ensuremath{\mathbb{Z}}}

\newcommand{\chamber}{C}

\newcommand{\cont}{\operatorname{cont}}

\newcommand{\Dnil}{D_{\operatorname{nil}}}

\newcommand{\End}{\operatorname{End}}
\newcommand{\Ext}{\operatorname{Ext}}
\newcommand{\ghilb}{\ensuremath{G}\operatorname{-Hilb}}

\newcommand{\GL}{\operatorname{GL}}

\newcommand{\Hom}{\operatorname{Hom}}
\newcommand{\0}{\operatorname{0}}
\newcommand{\I}{\operatorname{I}}
\newcommand{\II}{\operatorname{II}}
\newcommand{\III}{\operatorname{III}}
\newcommand{\id}{\operatorname{id}} 
\newcommand{\Irr}{\operatorname{Irr}}

\newcommand{\Knum}{K^{\operatorname{num}}}

\newcommand{\Nef}{\operatorname{Nef}}

\newcommand{\Pic}{\operatorname{Pic}}

\newcommand{\rank}{\operatorname{rk}}
\newcommand{\relint}{\operatorname{relint}}
 \DeclareMathOperator{\Rderived}{\mathbf{R}\!}

\newcommand{\Seshadri}{\operatorname{S}}
\newcommand{\SL}{\operatorname{SL}}

\newcommand{\supp}{\operatorname{supp}}

\title{The Cautis--Logvinenko conjecture}

\author{Alastair Craw} 
\address{Department of Mathematical Sciences, 
University of Bath, 
Claverton Down, 
Bath BA2 7AY, 
UK.}
\email{a.craw@bath.ac.uk}
\urladdr{https://sites.google.com/view/alastair-craw/}

\author{Hsueh-Yung Lin}
\address{National Taiwan University,
Department of Mathematics,
No. 1, Sec. 4, Roosevelt Rd.,
Taipei 10617, Taiwan}
\email{hsuehyunglin@ntu.edu.tw}

\author{Ryo Yamagishi} 
\address{Department of Mathematics, Faculty of Science, Kyushu University, 744, Motooka, Nishi-ku, Fukuoka, Japan 819-0395.}
\email{yamagishi@kyushu-u.ac.jp}

\begin{document}
\begin{abstract}
 For a finite subgroup $G\subset \SL(3,\CC)$, the Cautis--Logvinenko conjecture states that for each nontrivial irreducible representation $\rho$ of $G$, the image of the sheaf $\mathcal{O}_0\otimes \rho$ under the derived equivalence of Bridgeland--King--Reid is a pure sheaf on the $G$-Hilbert scheme. We prove a strong form of this conjecture in complete generality, and in doing so, we compute the relevant sheaf explicitly whenever
 its support is of dimension one. Our main result implies that a matrix defining the Gale dual of the linearisation map is sign-coherent, thereby allowing us to read off the support and cohomological degree of the pure sheaves directly from the matrix. 
\end{abstract}

 \maketitle

\section{Introduction}
   For a finite subgroup $G\subset \SL(2,\mathbb{C})$, the McKay correspondence \cite{McKay80} is a bijection between the nontrivial irreducible representations of $G$ and the irreducible components of the exceptional locus of the minimal resolution $\tau\colon S\to \mathbb{C}^2/G$. For a geometric construction of this correspondence, we regard the surface $S$ as the $G$-Hilbert scheme of $\mathbb{C}^2$ following Ito--Nakamura~\cite{ItoNakamura99}, in which case the resulting universal family over $S$ determines an equivalence from the derived category of coherent sheaves on $S$ to the derived category of $G$-equivariant coherent sheaves on $\mathbb{C}^2$ by \cite{KV00} (see also \cite{BKR01}). For every nontrivial irreducible representation $\rho$ of $G$, the inverse derived equivalence
 \[
 \Psi\colon D^G(\mathbb{C}^2)\stackrel{\sim}{\longrightarrow} D(S)
 \]
 sends the $G$-equivariant coherent sheaf $\mathcal{O}_0\otimes \rho$ supported at the origin in $\mathbb{C}^2$ to the twist by $-1$ of the structure sheaf of an irreducible component of the exceptional locus of $\tau$, and the assignment
 \begin{equation}
     \label{eqn:geometricMcKay}
\rho \longmapsto \supp\Psi(\mathcal{O}_{0}\otimes \rho)
  \end{equation}
 provides a natural geometric realisation of the classical McKay bijection.
   
    The search for an analogue of the McKay correspondence for a finite subgroup $G\subset \SL(3,\mathbb{C})$ in the spirit of bijection \eqref{eqn:geometricMcKay} was initiated by Cautis--Logvinenko~\cite{CL09}. The work of Bridgeland, King and Reid~\cite{BKR01} establishes that the $G$-Hilbert scheme is a crepant resolution $\tau\colon Y:= \ghilb(\mathbb{C}^3)\to \mathbb{C}^3/G$, and moreover, the universal family over $Y$ determines an equivalence from the derived category of coherent sheaves on $Y$ to the derived category of $G$-equivariant coherent sheaves on $\mathbb{C}^3$; write
 \[
 \Psi\colon D^G(\mathbb{C}^3)\stackrel{\sim}{\longrightarrow} D(Y)
 \]
 for the inverse to the BKR derived equivalence. Every irreducible representation $\rho$ of $G$ determines a complex of coherent sheaves $\Psi(\mathcal{O}_0\otimes \rho)$ whose support is contained in the exceptional locus of $\tau$ (see Corollary~\ref{cor:PsiSrhofibre}).  

 In this context, Cautis--Logvinenko~\cite[Conjecture~1.2]{CL09} made the following prediction:

 \begin{conjecture}[Cautis--Logvinenko]
 \label{conj:CLintro}
  Let $G\subset \SL(3,\mathbb{C})$ be a finite subgroup. For each nontrivial, irreducible representation $\rho$ of $G$, the object $\Psi(\mathcal{O}_0\otimes \rho)$ is a pure sheaf on $Y=\ghilb(\mathbb{C}^3)$. More precisely, there is a unique $k\in \{-1,0\}$ depending on $\rho$ such that $H^{k}\big(\Psi(\mathcal{O}_0\otimes \rho)\big)\neq 0$.
 \end{conjecture}
    
   As evidence, Cautis--Logvinenko~\cite[Theorem~1.1]{CL09} proved the conjecture for all cyclic subgroups $G$ for which the quotient  $\mathbb{C}^3/G$ has an isolated singularity \cite{CL14}, and this was later extended to all finite abelian subgroups of $G\subset \SL(3,\mathbb{C})$ by Cautis, Craw and Logvinenko~\cite{CCL17}, where toric geometry allowed for an explicit description of the pure sheaves $\Psi(\mathcal{O}_0\otimes \rho)$ for every irreducible representation $\rho$ of $G$. However, evidence for the conjecture in the nonabelian case is sparse at best: work of Van den Bergh~\cite{VdB04Duke}, which predates \cite{CL09}, implies that the conjecture holds in the very special case where the fibres of the crepant resolution $\tau$ have dimension at most one (see \cite[Example~4.6]{C21}).

   \medskip
   
 Our main result establishes a strong form of Conjecture~\ref{conj:CLintro} 
  in complete generality. Even in the abelian case, our approach simplifies significantly the proofs of the conjecture from \cite{CCL17, CL09} because we bypass the technical analysis of a resolution from \cite{CQV15} on which the original proofs rely.
 
 To state our main result, let $R(G)$ denote the representation ring of $G$, write $v$ (resp.\ $\rho_0$) for the regular (resp.\ trivial) representation of $G$, and consider the $\QQ$-vector space 
 \[
 \Theta:=\{\theta\in \Hom_{\mathbb{Z}}(R(G),\mathbb{Q}) \mid \theta(v)=0\}.
 \]
 The skew group algebra $A:= \mathbb{C}[x,y,z]\rtimes G$ is Morita equivalent to a quotient of the path algebra of the McKay quiver $Q$ of $G$ by a two-sided ideal of relations. In light of Ito--Nakajima~\cite{ItoNakajima00} and an observation of Alastair King, the $G$-Hilbert scheme is a fine moduli space $\ghilb(\mathbb{C}^3)\cong \mathcal{M}_\theta(A,v)$ 
  of $\theta$-stable $A$-modules of dimension vector $v$, where $\theta\in \Theta$ is any stability condition in the unique GIT chamber $C$ in $\Theta$ that contains the positive orthant $\big\{\theta\in \Theta \mid \theta_\rho:=\theta(\rho) > 0 \text{ for }\rho\neq \rho_0\big\}$. The coordinate hyperplanes $(\theta_\rho=0)$ in $\Theta$ indexed by the nontrivial irreducible representations $\rho$ of $G$ provide all supporting hyperplanes for the positive orthant. However, $(\theta_\rho=0)$ is not a supporting hyperplane of $C$ if and only if $(\theta_\rho=0)$ intersects $C$ (see Lemma~\ref{lem:C}, while \cite[Example~9.6]{CI04} shows a nontrivial example).
  Our main result can be stated as follows:
  
 \begin{theorem}
 \label{thm:mainintro}
    Let $G\subset \mathrm{SL}(3,\mathbb{C})$ be a finite subgroup.  For each nontrivial, irreducible representation $\rho$ of $G$, precisely one of the following occurs:
    \begin{enumerate}
      \item[\ensuremath{(+)}] the hyperplane $(\theta_\rho=0)$ in $\Theta$ does not intersect $C$, and       $\Psi(\mathcal{O}_0\otimes \rho)$ is a coherent sheaf whose support is a connected, proper surface in $Y;$ 
      \item[\ensuremath{(0)}] the hyperplane $(\theta_\rho=0)$ in $\Theta$ does not intersect $C$, and 
      there is a unique nonsingular, rational curve $\ell\cong \mathbb{P}^1$ in $Y$ such that $\Psi(\mathcal{O}_0\otimes \rho)\cong \mathcal{O}_{\ell}(-1);$ or 
     \item[\ensuremath{(-)}]  the hyperplane $(\theta_\rho=0)$ in $\Theta$ intersects $C$, and the complex $\Psi(\mathcal{O}_0\otimes \rho)[-1]$ is a coherent sheaf whose support is a connected proper surface in $Y$.    
     \end{enumerate}  
  Thus, Conjecture~\ref{conj:CLintro} holds, where $k=-1$ if and only if  the hyperplane $(\theta_\rho=0)$ intersects $C$.
  \end{theorem}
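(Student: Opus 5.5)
The argument turns on a variation-of-GIT description of $\Psi(\mathcal{O}_0\otimes\rho)$. Let $\mathcal{R}=\bigoplus_\rho \mathcal{R}_\rho$ be the tautological bundle on $Y=\mathcal{M}_\theta(A,v)$, normalised so that $\mathcal{R}_{\rho_0}\cong\mathcal{O}_Y$. Then $\Psi(-)=\Rderived\Hom_G(\mathcal{R}^\vee,-)$, up to the standard dualities, and $\Psi(\mathcal{O}_0\otimes\rho)$ is computed by applying $\Psi$ to the Koszul resolution of $\mathcal{O}_0$ tensored with $\rho$. The result is a complex on $Y$ in degrees $-3,\dots,0$, of the shape
\[
\mathcal{R}_\rho^\vee\to \bigoplus \mathcal{R}_{\sigma}^\vee\to\bigoplus \mathcal{R}_{\sigma'}^\vee\to \mathcal{R}_{\rho}^\vee,
\]
with summands indexed by the arrows into and out of the vertex $\rho$ of the McKay quiver $Q$ (this is the complex of \cite{CL09}). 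Its cohomology is supported on $\tau^{-1}(0)$ (Corollary~\ref{cor:PsiSrhofibre}). The plan is to control the degrees $-3,-2$ and $0$ pointwise, fibre by fibre, and then to identify which of $H^{-1}$ and $H^0$ survives.

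\textbf{Step 1: vanishing of $H^{-3}$ and $H^{-2}$.} At a point $y\in Y$ with $\theta$-stable module $M_y$, the derived fibre of the complex computes $\Ext^\bullet_A(M_y,S_\rho)$ up to a shift, where $S_\rho$ is the vertex simple. We have $\Hom_A(M_y,S_\rho)\neq0$ exactly when $\rho$ is a quotient of $M_y$. Stability together with $\theta_\rho>0$ should rule out such quotients when $\rho\neq\rho_0$, since $M_y$ is generated at $\rho_0$. By Serre duality in the 3-Calabi--Yau category, $\Ext^3_A(M_y,S_\rho)\cong\Hom_A(S_\rho,M_y)^\vee$. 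Hence only $\Ext^1$ and $\Ext^2$ can be nonzero, so the complex is quasi-isomorphic to a two-term complex of vector bundles in degrees $-1$ and $0$. The standard base-change and depth argument on the smooth threefold $Y$ then gives that $H^{-1}$ is torsion-free or zero, and since the complex has torsion support, $H^{-1}$ can only be nonzero if it is not really torsion-free. I would make this precise with the Auslander--Buchsbaum formula.

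\textbf{Step 2: separating $H^{-1}$ from $H^0$ via the chamber $C$.} The determinant $\det\Psi(\mathcal{O}_0\otimes\rho)$, or equivalently its first Chern class, is expressed through the linearisation map $L_C\colon\Theta\to\Pic(Y)$, $\theta\mapsto\bigotimes\det(\mathcal{R}_\sigma)^{\theta_\sigma}$. The divisor class in question is the Gale dual column associated to $\rho$. The key claim is sign-coherence: the surface-class part of $c_1$, namely the divisor $D_\rho$ supported on the exceptional surfaces, is effective if the hyperplane $(\theta_\rho=0)$ misses $C$, and anti-effective if it meets $C$. To prove this I would use Lemma~\ref{lem:C} together with the wall-crossing description. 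If $(\theta_\rho=0)$ meets $C$, choose $\theta\in C$ with $\theta_\rho<0$. Then $S_\rho$ is a $\theta$-destabilising sub rather than quotient at points of the exceptional locus, which forces $\Ext^1$ to jump and makes $H^0$ vanish generically, hence identically. In the opposite case, $H^{-1}=0$. Purity then says that a two-term complex whose $c_1$ is a sign-definite divisor has cohomology in only one degree, which reduces to showing that the complementary cohomology has codimension $\geq 2$ and then vanishes by local duality.

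\textbf{Step 3: support and the curve case.} The support of the surviving sheaf is the union of surfaces in $D_\rho$, or a curve when $D_\rho=0$. Connectedness should follow from the indecomposability of $\Psi(\mathcal{O}_0\otimes\rho)$, as the image of a simple object. When $D_\rho=0$, case $(0)$, the sheaf has one-dimensional support and $\chi=0$ with the correct rank on a curve. I would show the curve is a single $\mathbb{P}^1$, namely the curve contracted by the wall of $C$ cut out by $\theta_\rho=0$, and match it with $\mathcal{O}_\ell(-1)$ using the simple-object property $\Hom=\CC$ and $\Ext^1=0$ on the curve.

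\textbf{Main obstacle.} Step 2 is the hardest part: proving sign-coherence and the vanishing of the wrong cohomology sheaf without the explicit resolution of \cite{CQV15}. I would first attack it via the fibre-dimension bound, namely that the locus where both $\Ext^1$ and $\Ext^2$ are nonzero has dimension at most one, combined with a local cohomology argument.
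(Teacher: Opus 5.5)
Your Step~1 is false as stated, and the error propagates through the rest of the plan. From $\Hom_A(M_y,S_\rho)=0$ (correct, since $M_y$ is generated at $\rho_0$) you only learn that one end of the derived fibre vanishes. The other end is $\Ext^3_A(M_y,S_\rho)\cong\Hom_A(S_\rho,M_y)^\vee$, which is nonzero whenever $S_\rho$ embeds in $M_y$. When $(\theta_\rho=0)$ is a supporting hyperplane of $C$, this happens at every point of $\supp\Psi(S_\rho)$: these are exactly the points where $M_y$ is strictly $\eta$-semistable for $\eta\in\relint(W_\rho)$, with $S_\rho$ as destabilising submodule.

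A sanity check confirms the claim cannot hold in general. If only two consecutive Ext groups survived at every $y$, the Bridgeland--Maciocia criterion would give a two-term complex of vector bundles with injective differential, because the kernel is torsion-free with proper support. Then $\Psi(S_\rho)$ would sit in one fixed degree for every $\rho\neq\rho_0$; with your indexing, $H^{-1}$ would simply vanish, not be ``torsion-free or zero''. But both degrees occur, e.g.\ for $V_1$ and $V_2$ in Example~\ref{exa:trihedralNdC}. What you can legitimately conclude is a three-term complex $P^{-2}\to P^{-1}\to P^0$.

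The vanishing of $\Hom_A(S_\rho,M_y)$ for all $y$ is exactly the content of case $(-)$. If $(\theta_\rho=0)$ meets $C$, choose $\theta\in C$ with $\theta_\rho\le 0$. Then $S_\rho$ cannot be a submodule of any $\theta$-stable $M_y$, the complex shortens to an injective map $P^{-2}\to P^{-1}$, and $\Psi(S_\rho)[-1]$ is a sheaf supported on a divisor. This should replace your Step~2 argument for that case (``destabilising sub \dots\ forces $\Ext^1$ to jump''), which does not make sense as written.

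The genuine gap is the complementary case, where you simply assert $H^{-1}=0$. Sign-coherence cannot supply this, for three reasons. First, stray cohomology supported on curves (say $H^{-1}$ on a curve and $H^0$ on a surface) is invisible in $F_2/F_1$. Second, in case $(0)$ the relevant column is zero, so it says nothing. Third, in the paper sign-coherence is deduced from this theorem, not used to prove it. Moreover, in case $(0)$ no two-term reduction exists, since $\Hom^2_{D(Y)}(\mathcal{O}_\ell(-1),\mathcal{O}_y)\neq 0$ for $y\in\ell$. So the object must be identified directly.

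The missing idea is variation of GIT into the wall $W_\rho=\overline{C}\cap(\theta_\rho=0)$. For $\eta\in\relint(W_\rho)$ one shows $\supp\Psi(S_\rho)=Z_\rho$, the strictly $\eta$-semistable locus in $\tau^{-1}(\pi(0))$, where there are sequences $0\to S_\rho\to M_z\to Q_z\to 0$. One then argues by the type of $\cont_{W_\rho}$:

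For type~$\0$, the VGIT morphism is injective, which forces $\Hom_A(M_y,Q_z)=0$ for $y\neq z$. Combined with $\Ext^1_A(M_y,M_z)=0$, this kills $\Ext^1_A(M_y,S_\rho)$, giving $P^{-1}\hookrightarrow P^0$ and a sheaf in degree $0$.

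For types~$\I$ and~$\III$, a contracted $\mathbb{P}^1$ lies in $Z_\rho$; to see this, move extremal curves into $\tau^{-1}(\pi(0))$ and use Andreatta--Wi\'sniewski. On this curve every $\mathcal{R}_\sigma$ with $\sigma\neq\rho$ is trivial and $\det\mathcal{R}_\rho$ has degree one. A direct cohomology computation then gives $\Phi(\mathcal{O}_\ell(-1))\cong S_\rho$, which also forces $Z_\rho=\ell$.

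Type~$\II$ must be excluded separately. A surface contracted to a point is a Gorenstein del Pezzo surface, so it contains a smooth rational curve, and the previous argument would then make $Z_\rho$ a curve.

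Your Step~3 contains none of these ingredients. It gives no reason the support is a single smooth rational curve. It has no degree computation of the tautological bundles on that curve, which is what actually pins down $\mathcal{O}_\ell(-1)$; $\Hom=\CC$ and $\Ext^1=0$ alone do not. And it has no argument ruling out a surface contracted to a point.
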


 It is important to observe that the object $\Psi(\mathcal{O}_0\otimes \rho_0)$ associated to the trivial representation can have more than one nonvanishing cohomology sheaf. The simplest example occurs for the cyclic group action of type $\frac{1}{6}(1,2,3)$, where the exceptional locus of $\tau$ has two irreducible components comprising one curve and one surface. In this case, Cautis,  Craw and Logvinenko~\cite[Theorem~1.2]{CCL17} implies that  $H^k(\Psi(\mathcal{O}_0\otimes \rho_0))\neq 0$ for $k=-2$ and $k=-1$, so the assumption that $\rho$ is nontrivial in Theorem~\ref{thm:mainintro} is essential.

 \medskip

The central role played by Conjecture~\ref{conj:CLintro} in extending the classical McKay correspondence in the spirit of \eqref{eqn:geometricMcKay} to finite nonabelian subgroups of $\SL(3,\mathbb{C})$ was highlighted in recent work by the first author~\cite[Section~6]{C21}, and Theorem~\ref{thm:mainintro} allows us to build on that work. For this, consider the dimension filtration $\{0\}\subset F_0\subseteq F_1\subseteq F_2=K_0(Y)_{\QQ}$ of the Grothendieck group of coherent sheaves with support in $\tau^{-1}(\pi(0))$ where $\pi\colon \CC^3\to \CC^3/G$ is the quotient map.
   The main result of \cite[Theorem~1.1]{C21} implies  that the classes 
 $[\Psi(\mathcal{O}_0\otimes\rho)]$ for $\rho\neq \rho_0$ provide a basis for $F_2/F_0$, and furthermore, the quotient map 
 \[
 G_C\colon F_2/F_0\longrightarrow F_2/F_1
 \]
sends each class $[\Psi(\mathcal{O}_0\otimes\rho)]$ to a specific linear combination of the basis $[\mathcal{O}_{E_1}], \dots, [\mathcal{O}_{E_\ell}]$ of $F_2/F_1$ associated to the irreducible components $E_1, \dots, E_\ell$ of dimension two in the fibre $\tau^{-1}(\pi(0))$. To state our second main result, we say that a matrix is \emph{sign-coherent} if for each column, the nonzero entries are either all positive or all negative. 

 \begin{theorem}
 \label{thm:signcoherentintro}
 Let $G\subset \SL(3,\CC)$ be a finite subgroup. The matrix defining the map $G_C$ with respect to the bases described above is sign-coherent. 
  \end{theorem}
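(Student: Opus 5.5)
Theorem~\ref{thm:signcoherentintro} will follow from Theorem~\ref{thm:mainintro}. The idea is that each column of the matrix of $G_C$ records the image of $[\Psi(\mathcal{O}_0\otimes\rho)]$ in $F_2/F_1$, and this image is determined by the two-dimensional part of the support together with the cohomological degree. The plan is to fix a nontrivial $\rho$ and compute the column indexed by $\rho$ in each of the three cases of Theorem~\ref{thm:mainintro}.

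\textbf{Case $(0)$.} Here $\Psi(\mathcal{O}_0\otimes\rho)\cong\mathcal{O}_\ell(-1)$ has one-dimensional support, so its class lies in $F_1$. Its image under $G_C$ is therefore zero, and the column is identically zero, which is trivially sign-coherent.

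\textbf{Case $(+)$.} Here $\mathcal{F}:=\Psi(\mathcal{O}_0\otimes\rho)$ is a coherent sheaf whose support is a proper surface. By Corollary~\ref{cor:PsiSrhofibre} this support lies in $\tau^{-1}(\pi(0))$, so it is a union of some of the surfaces $E_i$ together with lower-dimensional pieces. Modulo $F_1$, the class of any coherent sheaf with support in the fibre equals $\sum_i m_i[\mathcal{O}_{E_i}]$, where $m_i$ is the length of $\mathcal{F}$ at the generic point of $E_i$. One sees this by filtering $\mathcal{F}$ by subsheaves whose successive quotients are, generically along each $E_i$, isomorphic to $\mathcal{O}_{E_i}$; sheaves supported on $E_i$ that agree with $\mathcal{O}_{E_i}$ generically differ from it by classes in $F_1$. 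Each $m_i$ is a nonnegative integer, and some $m_i$ is positive because the support is two-dimensional. So the column has nonnegative entries.

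\textbf{Case $(-)$.} Here $\Psi(\mathcal{O}_0\otimes\rho)[-1]$ is a sheaf of the same kind. Since $[\Psi(\mathcal{O}_0\otimes\rho)]=-[\Psi(\mathcal{O}_0\otimes\rho)[-1]]$ in $K_0$, the column has nonpositive entries, not all zero.

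In every case the nonzero entries of each column share a sign, which proves sign-coherence. The column is positive exactly when $(\theta_\rho=0)$ misses $C$ and the support is a surface, and negative exactly when $(\theta_\rho=0)$ meets $C$. This is the sense in which one reads off support and degree from the matrix.

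\textbf{Points to check.}
\begin{enumerate}[label=(\roman*)]
\item The basis of $F_2/F_1$ used in \cite{C21} must be exactly the classes $[\mathcal{O}_{E_i}]$, so that the generic-length expansion above gives the matrix entries with no change-of-basis twist.
\item Any zero-dimensional or curve contributions must vanish in $F_2/F_1$. This holds by the definition of the dimension filtration.
\end{enumerate}

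The main obstacle is therefore not in this deduction. It is Theorem~\ref{thm:mainintro} itself, specifically the purity of $\Psi(\mathcal{O}_0\otimes\rho)$, which I am assuming.
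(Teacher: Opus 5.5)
Your proposal is correct and takes essentially the same route as the paper. Purity from Theorem~\ref{thm:mainintro} collapses the alternating sum, and expanding the class modulo $F_1$ by generic lengths in the basis $[\mathcal{O}_{E_i}]$ gives each column the sign $(-1)^k$ times nonnegative integers; the paper obtains this expansion via Proposition~\ref{prop:GaleDual} (from \cite{C21}) and Lemma~\ref{lem:F2/F1basis}, where it also verifies the linear independence of the $[\mathcal{O}_{E_i}]$ that you take as part of the setup.
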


 The explicit description of $G_C$ that follows from Theorem~\ref{thm:mainintro} (see Proposition~\ref{prop:GaleDual}) allows us to read off the support with multiplicities of the objects $\Psi(\mathcal{O}_0\otimes \rho)$ for $\rho\neq \rho_0$ directly from the $\rho^{\text{th}}$ column of this sign-coherent matrix. The unusual naming convention for the cases in Theorem~\ref{thm:mainintro} is explained by the three possibilities for the sign of the entries in the $\rho^{\text{th}}$ column of this matrix: 
 \begin{enumerate}
     \item[\ensuremath{(+)}] the nonzero entries are positive if and only if $\rho$ satisfies case \ensuremath{(+)} of Theorem~\ref{thm:mainintro};
     \item[(0)] the column is the zero vector if and only if $\rho$ satisfies case \ensuremath{(0)} of Theorem~\ref{thm:mainintro}; or
     \item[($-$)] the nonzero entries are negative if and only if $\rho$ satisfies case \ensuremath{(-)} of Theorem~\ref{thm:mainintro}.
 \end{enumerate}
This trichotomy for nontrivial irreducible representations is a well-known feature of \emph{Reid's recipe} for a finite abelian subgroup of $\SL(3,\CC)$, see \cite[Theorem~1.2]{Logvinenko10} or \cite[Theorem~1.4]{BCQ15}. In Section~\ref{sec:signcoherent}, we illustrate this recipe for nonabelian subgroups  by applying Theorems~\ref{thm:mainintro} and \ref{thm:signcoherentintro} to the examples of order 39 and 24  that were studied first by Nolla de Celis~\cite{NolladeCelis21} (compare \cite[Example~6.5]{C21}).

 \medskip

 Theorem~\ref{thm:signcoherentintro} provides us with the information of the supports of the objects $\Psi(\mathcal{O}_0\otimes \rho)$, so it is possible to write down explicitly the assignment 
 \[
\rho \longmapsto \supp\Psi(\mathcal{O}_{0}\otimes \rho)
  \]
 in the spirit of \eqref{eqn:geometricMcKay}. However, this is not a bijection for a general finite subgroups of $\SL(3,\CC)$; indeed, it was well-known to Cautis and Logvinenko that, for the cyclic group action of type $\frac{1}{6}(1,2,3)$,  the objects $\Psi(\mathcal{O}_0\otimes \rho)$ associated to three of the nontrivial irreducible representations $\rho$ share the same support. Therefore, to understand the McKay correspondence bijection in full, one must go beyond computing the support by calculating the pure sheaves $\Psi(
 \mathcal{O}_0\otimes \rho)$ themselves. This leads one naturally to seek a nonabelian generalisation of the \emph{derived McKay correspondence} \cite{CCL17}.

  We take the first steps in this direction by calculating explicitly the objects  $\Psi(
 \mathcal{O}_0\otimes \rho)$ in case \ensuremath{(0)} of Theorem~\ref{thm:mainintro}. To state the result, note that in case \ensuremath{(0)} of Theorem~\ref{thm:mainintro}, the intersection of the hyperplane $(\theta_\rho=0)$ with the closure of $C$ defines a codimension-one wall $W_\rho$ of $C$. Define $Z_\rho$ to be closed subset of $Y$ obtained as the intersection of the fibre $\tau^{-1}(\pi(0))$ with the strictly-semistable locus for the wall $W_\rho$. Our analysis of the morphism induced by variation of GIT quotient from $C$ into the wall $W_\rho$ allows us to establish the following result (see Proposition~\ref{prop:typeIorIII}).

  \begin{theorem}
    \label{thm:BCQconjectureintro}
 Suppose that $\rho\neq \rho_0$ is such that case \ensuremath{(0)} of Theorem~\ref{thm:mainintro} holds, or equivalently, suppose that the support of $\Psi(\mathcal{O}_0\otimes \rho)$ has dimension one. Then the locus $Z_\rho$ in $Y$ is isomorphic to $\mathbb{P}^1$, and we have $\Psi(\mathcal{O}_0\otimes \rho)\cong \det(\mathcal{R}_\rho)^{-1}\vert_{Z_\rho}$.
 \end{theorem}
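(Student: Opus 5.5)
The plan is to realise $\Psi(\mathcal{O}_0\otimes\rho)$ through the tautological bundles and the wall-crossing morphism attached to $W_\rho$.

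Recall that $\Psi(-)=\Rderived\Hom_G(\mathcal{R},-)$ up to dualising conventions, where $\mathcal{R}=\bigoplus_\rho \mathcal{R}_\rho\otimes\rho$ is the tautological bundle on $Y=\mathcal{M}_\theta(A,v)$. Applying the Koszul resolution of $\mathcal{O}_0$ on $\CC^3$ gives a representative of $\Psi(\mathcal{O}_0\otimes\rho)$ as a three-term complex of tautological bundles
\[
\mathcal{R}_\rho^\vee \to \textstyle\bigoplus \mathcal{R}_{\rho'}^\vee \to \bigoplus\mathcal{R}_{\rho''}^\vee \to \mathcal{R}_\rho^\vee ,
\]
with terms indexed by the arrows and relations of the McKay quiver at the vertex $\rho$. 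The fibre of this complex at a point $y\in Y$ computes $\Ext^\bullet_A(M_y,S_\rho)$ (suitably dualised), where $M_y$ is the $\theta$-stable module at $y$ and $S_\rho$ is the vertex simple. So the cohomology sheaves can be detected pointwise.

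Next, choose $\theta_0$ in the relative interior of the wall $W_\rho$ and consider the morphism $f\colon Y\to \mathcal{M}_{\theta_0}(A,v)$ obtained by variation of GIT quotient. Since case (0) holds, $(\theta_\rho=0)$ does not meet $C$, so $\theta_0$ lies on the boundary of the positive orthant with $\theta_0(\rho)=0$ and $\theta_0(\rho')>0$ for all $\rho'\neq\rho,\rho_0$. A point $y$ is strictly $\theta_0$-semistable exactly when $M_y$ has a submodule (or quotient) of dimension vector a multiple of $\rho$. Stability for $\theta\in C$ forces that destabiliser to be a quotient isomorphic to $S_\rho$, that is, $\Hom_A(M_y,S_\rho)\neq0$. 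Hence $Z_\rho$ is precisely the locus in $\tau^{-1}(\pi(0))$ where the last map of the complex fails to be surjective, which identifies the support of $\Psi(\mathcal{O}_0\otimes\rho)$ with $Z_\rho$. By Theorem~\ref{thm:mainintro}, this support is a curve $\ell\cong\mathbb{P}^1$, so $Z_\rho=\ell$.

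To compute the sheaf itself, use case (0) of Theorem~\ref{thm:mainintro}: $\Psi(\mathcal{O}_0\otimes\rho)\cong\mathcal{O}_\ell(-1)$ is a sheaf in degree $0$. It therefore equals the cokernel of the last map, i.e.\ a quotient of $\mathcal{R}_\rho^\vee$ supported on $\ell$.

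The step I expect to be the main obstacle is showing this quotient is a line bundle on $\ell$, namely $\det(\mathcal{R}_\rho)^{-1}\vert_{\ell}$. The approach is as follows.
\begin{enumerate}
\item Along $\ell$, each $M_y$ has a unique quotient $M_y\to S_\rho^{\oplus r}$. Since this quotient is unique, $r$ is constant along $\ell$, and the universal family restricted to $\ell$ yields a surjection $\mathcal{R}_\rho^\vee\vert_\ell\to\mathcal{L}$ onto a rank-$r$ bundle.
\item Comparing Euler characteristics with $\mathcal{O}_\ell(-1)$ forces $r=1$ and $\deg\mathcal{L}=-1$.
\item To identify $\mathcal{L}$ with $\det(\mathcal{R}_\rho)^{-1}\vert_\ell$, note that the kernel of $\mathcal{R}_\rho\vert_\ell\to\mathcal{L}^\vee$ consists of the part of $\mathcal{R}_\rho$ pulled back from the $\theta_0$-stable factor. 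The contracted curve $\ell$ maps to a point under $f$, so that part is trivial on $\ell$. Hence $\det\mathcal{R}_\rho\vert_\ell\cong\mathcal{L}^\vee$.
\end{enumerate}
This uses that the line bundle $\det\mathcal{R}_\rho$ is the one whose degree on $f$-contracted curves is governed by $\theta_\rho$.

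The delicate points are the constancy of $r$ and the triviality of the kernel on $\ell$. For both I would use that $f$ contracts $\ell$ to a single point whose polystable module is $N\oplus S_\rho$, with $N$ fixed.
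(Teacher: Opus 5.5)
There is a genuine gap. It sits where you anticipated, and it is made worse by a sub/quotient error.

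At $y\in Z_\rho$ the destabilising object is a \emph{submodule} $S_\rho\hookrightarrow V_y$, never a quotient. Indeed, $V_y$ is generated by its one-dimensional $\rho_0$-component, so $\Hom_A(V_y,S_\rho)=0$ for every $y\in Y$ (Lemma~\ref{lem:HomVySrho}). Accordingly, the fibre at $y$ of the cokernel of $\bigoplus\mathcal{R}_{\rho'}^\vee\to\mathcal{R}_\rho^\vee$ is $\Hom_A(S_\rho,V_y)^*\cong\Ext^3_A(V_y,S_\rho)$, not $\Hom_A(V_y,S_\rho)^*$.

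With that corrected, your identification $Z_\rho=\supp\Psi(S_\rho)=\ell$ survives; it is Proposition~\ref{prop:support}. However, steps (1)--(3) point the wrong way. Dualising $\mathcal{R}_\rho^\vee\vert_\ell\twoheadrightarrow\Psi(S_\rho)\cong\mathcal{O}_\ell(-1)$ gives a line \emph{subbundle} $\mathcal{O}_\ell(1)\hookrightarrow\mathcal{R}_\rho\vert_\ell$, whose fibres are the copies of $S_\rho$ inside $V_y$. Steps (1)--(2) are redundant in any case, because the hypothesis of case $(0)$ already tells you this cokernel is $\mathcal{O}_\ell(-1)$.

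The real content is step (3), and there you assert rather than prove. You need $\mathcal{R}_\rho\vert_\ell/\mathcal{O}_\ell(1)$ to have degree zero. You have not shown that $\ell$ is contracted by $f$. Even granting that, S-equivalence only fixes the Jordan--H\"older factors of $Q_y=V_y/S_\rho$, not $Q_y$ itself. So nothing forces a polystable point of the form $N\oplus S_\rho$ with $N$ fixed and stable.

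The paper supplies the missing degree computation from the other direction:
\begin{enumerate}
\item It starts from a curve in the extremal ray contracted by $\cont_{W_\rho}$ (Lemma~\ref{lem:curveintauinverse0} and \cite{AW98}) and shows that this curve lies in $Z_\rho$.
\item It uses $\deg L_C(\eta)\vert_\ell=0$, $\eta_\sigma>0$ and global generation of the $\mathcal{R}_\sigma$ to force $\mathcal{R}_\sigma\vert_\ell$ to be trivial for $\sigma\neq\rho$.
\item It gets $\deg\det(\mathcal{R}_\rho)\vert_\ell=1$ from perfectness of the pairing (Lemma~\ref{lem:deg0or1}).
\item It then checks $\Phi(\det(\mathcal{R}_\rho)^{-1}\vert_\ell)\cong S_\rho$ directly (Proposition~\ref{prop:contractP1}).
\end{enumerate}
Case $(0)$ of Theorem~\ref{thm:mainintro} is itself deduced from this argument. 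Your proof is therefore non-circular only if that theorem is treated as a black box.

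Granting it, the shortest repair bypasses the wall altogether. On $\ell\cong\mathbb{P}^1$ we have $\deg\det(\mathcal{R}_\rho)\vert_\ell=\chi(\mathcal{R}_\rho\otimes\mathcal{O}_\ell(-1))$. Since $\Phi(\mathcal{O}_\ell(-1))\cong S_\rho$ and $\Phi^k(F)\cong\bigoplus_\sigma H^k(\mathcal{R}_\sigma\otimes F)\otimes\sigma$, this Euler characteristic is the multiplicity of $\rho$ in $S_\rho$, namely $1$.
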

 
  This result establishes the conjecture of Bocklandt--Craw--Quintero V\'{e}lez~\cite[Conjecture~5]{BCQ21} when $\rho$ is such that case \ensuremath{(0)} of Theorem~\ref{thm:mainintro} holds. In case \ensuremath{(0)}, the line bundle $\det(\mathcal{R}_\rho)$ has degree one on the curve $\ell:=Z_\rho\cong \mathbb{P}^1$ (see Lemma~\ref{lem:deg0or1}), so Theorem~\ref{thm:BCQconjectureintro} implies that 
  \[
  \Psi(\mathcal{O}_0\otimes \rho)\cong \mathcal{O}_{Z_\rho}(-1)
  \]
  which is the form presented in Theorem~\ref{thm:mainintro}(0). In fact, for every irreducible representation $\sigma\neq \rho$, the restriction of the tautological bundle $\mathcal{R}_\sigma$ on $Y$ to the curve $\ell$ is trivial, while $\mathcal{R}_\rho$ is globally generated of degree one. This reflects precisely the behaviour of the tautological bundles on the $G$-Hilbert scheme for a finite subgroup $G\subset \SL(2,\CC)$ as in \cite[Lemma~2.1]{KV00}. Note that \cite[Conjecture~5]{BCQ21} remains open in case \ensuremath{(+)} of Theorem~\ref{thm:mainintro}.

Finally, the Cautis--Logvinenko conjecture was extended
by the first author \cite{C21} to a statement for threefold noncommutative crepant resolutions (NCCRs) for which the vertex simple modules exist. Here, we treat only the classical case of the conjecture.
 
\smallskip

 \noindent \textbf{Notation}
 We work throughout over the base field $\CC$ of complex numbers. When working with modules over an algebra, we consider left-modules only. For a finitely generated $\ZZ$-module $\Lambda$, we write $\Lambda_{\QQ}:=\Lambda\otimes_{\ZZ} \QQ$. 
 
\smallskip

 \noindent \textbf{Acknowledgements}
 We thank Tom Bridgeland and Matthew Pressland for an inspiring conversation that provided the catalyst for this project.  The first and third authors were supported by Research Project Grant RPG-2021-149 from The Leverhulme Trust. In addition, the first author is also supported by EPSRC grant UKRI2779,
 the second author is supported by
the Yushan Fellow Program of the Ministry of Education in Taiwan 
(MOE-115-YSFMS-0003-002-P2), 
the National Council for Science and Technology in Taiwan (NSTC 115-2628-M-002-009), and the Asian Young Scientist Fellowship,
 and the third author is supported by JSPS KAKENHI Grants numbered JP26K06728 and JP19K14504.

 \section{The $G$-Hilbert scheme}
 We begin by recalling some well-known results about the $G$-Hilbert scheme for a finite subgroup $G\subset \SL(3,\mathbb{C})$.

 \subsection{Skew group algebra}
 Let $G\subset \SL(3,\mathbb{C})$ be a finite subgroup. 
  Let $\Irr(G)$ denote the set of isomorphism classes of irreducible representations of $G$, where $\rho_0$ is the trivial representation, and write $R(G)=\bigoplus_{\rho\in \Irr(G)} \ZZ \rho$ for the representation ring of $G$.   Write $\rho_{\text{given}}$ for the three-dimensional representation of $G$ given by the inclusion of $G\subset \SL(3,\CC)$.

 The skew group algebra $A:= \CC[x,y,z]\rtimes G$ is the $\CC$-algebra whose underlying vector space is $\CC[x,y,z] \otimes_{\CC} \CC[G]$, and where the multiplication is defined first on pure tensors by 
 \[ (f \otimes g)(f' \otimes g') = f\,g(f') \otimes gg', 
 \] for $f,f' \in \CC[x,y,z]$ and $g,g' \in G$, and is then extended \(\CC\)-bilinearly to all of $A$. Since $G\subset \SL(3,\mathbb{C})$,  $A$ is a Calabi--Yau algebra of global dimension three. 
 
 The \emph{McKay quiver} of $G$ is the quiver $Q$ with vertex set $\Irr(G)$, where for each $\rho, \rho'\in \Irr(G)$, there are $\dim_{\mathbb{C}}\Hom_G(\rho', \rho\otimes \rho_{\text{given}})$ arrows from vertex $\rho$ to vertex $\rho'$.  It is well-known that the skew group algebra $A$ is Morita equivalent to a quotient $A^\flat$ of the path algebra of $Q$ modulo a two-sided ideal of relations \cite[Theorem~2.1]{GM02}, so the category of $A$-modules is equivalent to the category of $A^\flat$-modules, and hence also to the category of representations of $Q$ that satisfy the relations.  For each $\rho\in \Irr(G)$, define the $A$-module 
 \[
 S_\rho:= \CC\otimes \rho,
 \]
 where the $A$-module structure satisfies $(f\otimes g)\cdot v=f(0)\rho(g)v$ for $f\in \CC[x,y,z]$, $g\in G$, and $v\in \rho$. Observe that every such $A$-module $S_\rho$ is simple because $\rho$ is irreducible. Every $A$-module $V$ is in particular a $\CC[G]$-module, so it decomposes  
 \begin{equation}
     \label{eqn:Gmodule}
  V=\bigoplus_{\rho\in \Irr(G)}V_\rho\otimes \rho
 \end{equation}
 as a representation of $G$. The \emph{dimension vector} of a finite dimensional $A$-module $V$ is the vector $d(V):=(\dim V_\rho)_{\rho\in \Irr(G)}\in R(G)$.

 The global sections functor gives an equivalence between the abelian category of $G$-equivariant coherent sheaves on $\mathbb{C}^3$ and the abelian category of finitely generated $A$-modules. Recall that a finite-dimensional $A$-module $V$ is \emph{nilpotent} if there exists $n\in \mathbb{N}$ such that any path in the McKay quiver of length greater than $n$ acts on the $A^\flat$-module corresponding to $V$ as the zero map. 
 
 \begin{lemma}
 \label{lem:abelianequiv}
    The global sections functor identifies the abelian category of $G$-equivariant coherent sheaves on $\mathbb{C}^3$ with support at $0\in \mathbb{C}^3$ and the abelian category of nilpotent left $A$-modules. For $\rho\in \Irr(G)$, this equivalence identifies $\mathcal{O}_0\otimes \rho$ with the simple $A$-module $S_\rho=\CC\otimes \rho$.
 \end{lemma}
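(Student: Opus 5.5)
The plan is to restrict the standard equivalence between $G$-equivariant coherent sheaves on $\CC^3$ and finitely generated $A$-modules to the objects supported at the origin. Since $\CC^3$ is affine, the global sections functor $\mathcal{F}\mapsto \Gamma(\CC^3,\mathcal{F})$ gives an equivalence between $G$-equivariant coherent sheaves and finitely generated $\CC[x,y,z]$-modules carrying a compatible $G$-action. Such modules are exactly finitely generated $A$-modules, with $(f\otimes g)\cdot m=f\,(g\cdot m)$. Both full subcategories in the statement are closed under subobjects, quotients and extensions, so it suffices to show that the equivalence matches the two subcategories object by object.

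First, let $\mathcal{F}$ be supported at $0$, and put $M=\Gamma(\CC^3,\mathcal{F})$. Then $M$ is a finitely generated $\CC[x,y,z]$-module whose support is $\{0\}$. By the Nullstellensatz, $\mathfrak{m}^n M=0$ for some $n$, where $\mathfrak{m}=(x,y,z)$, and in particular $M$ is finite dimensional. Conversely, a finite-dimensional $A$-module $V$ on which some power of $\mathfrak{m}$ acts as zero has support $\{0\}$ as a $\CC[x,y,z]$-module.

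The remaining step is to show that ``$\mathfrak{m}^n V=0$ for some $n$'' is equivalent to nilpotency of the corresponding $A^\flat$-module. I expect this to be the main obstacle, since it requires recalling how the Morita equivalence of \cite[Theorem~2.1]{GM02} is built. The algebra $A^\flat=eAe$, where $e=\sum_\rho e_\rho$ and each $e_\rho\in\CC[G]$ is a primitive idempotent picking out one copy of $\rho$. The arrows of $Q$ from $\rho$ to $\rho'$ correspond to a basis of $e_{\rho'}(\mathfrak{m}_1\otimes\CC[G])e_\rho$, where $\mathfrak{m}_1=\langle x,y,z\rangle\cong\rho_{\text{given}}^*$ is the space of linear forms. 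Consequently, paths of length $k$ span $e(\mathfrak{m}^k\otimes\CC[G])e$ modulo the relations, because $\mathfrak{m}^k$ is generated by products of $k$ linear forms and the idempotents can be absorbed using the $G$-action. Under the Morita equivalence $V\mapsto eV$, the condition that every path of length $>n$ acts by zero on $eV$ is therefore equivalent to $e\,\mathfrak{m}^{n+1}A\,e$ annihilating $eV$. Since $AeA=A$, this is in turn equivalent to $\mathfrak{m}^{n+1}V=0$. This gives the first statement.

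For the second statement, note that $\Gamma(\CC^3,\mathcal{O}_0\otimes\rho)=\CC\otimes\rho$. Here a polynomial $f$ acts as multiplication by the scalar $f(0)$, since it acts through $\CC[x,y,z]/\mathfrak{m}$, and $G$ acts diagonally. Because $G$ fixes the origin, the $G$-action on $\mathcal{O}_0$ is trivial, so the action on $\CC\otimes\rho$ is through $\rho$. This is precisely the formula $(f\otimes g)\cdot v=f(0)\rho(g)v$ defining $S_\rho$, so the equivalence identifies $\mathcal{O}_0\otimes\rho$ with $S_\rho$.
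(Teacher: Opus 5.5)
Your proof is correct, but it takes a different route from the paper. The paper argues through composition factors. Citing \cite[Lemma~8.1]{BKR01}, the simple $G$-equivariant sheaves supported at $0$ are exactly the sheaves $\mathcal{O}_0\otimes\rho$, so every sheaf supported at $0$ has a Jordan--H\"older filtration with these factors. Transporting this through $H^0$ and the Morita equivalence, such sheaves correspond to $A^\flat$-modules that are iterated extensions of the vertex simples, and these are precisely the nilpotent ones. You instead characterise support at $0$ by annihilation by a power of $\mathfrak{m}$. You then match this with nilpotency using two facts: the graded structure of $A^\flat\cong eAe$, where arrows lie in degree one so that paths of length at least $k$ span $e(\mathfrak{m}^k\otimes\CC[G])e$, and fullness $AeA=A$. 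What your version buys is that it is self-contained, since it needs no classification of simple equivariant sheaves, and it is quantitative: the bound on the lengths of paths acting nontrivially on $eV$ is exactly governed by the power of $\mathfrak{m}$ annihilating $V$. It also makes explicit an input the paper's argument uses only tacitly, namely that arrows have degree one. The paper needs this to know that the $A^\flat$-module corresponding to $S_\rho$ is the vertex simple, with any loops at $\rho$ acting by zero. The paper's version is shorter and phrased entirely in terms of composition factors, so it never refers to the polynomial grading of $A$. Whether your description of the arrows matches the paper's orientation convention for $Q$ ($\rho_{\text{given}}$ versus its dual) does not affect your argument.
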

 \begin{proof}
  Following \cite[Proof of Lemma~8.1]{BKR01}, the simple $G$-equivariant coherent sheaves supported at $0\in \mathbb{C}^3$ are precisely $\{\mathcal{O}_0\otimes \rho\}_{\rho\in \Irr(G)}$, and every such sheaf admits a Jordan--Holder filtration having these simple objects as composition factors. Since $S_\rho\cong H^0(\mathcal{O}_0\otimes \rho)$, the corresponding $A$-modules admit filtrations whose factors are the simple $A$-modules $\{S_\rho \}_{\rho\in \Irr(G)}$. The result follows from the Morita equivalence between $A$ and $A^\flat$ because an $A^\flat$-module admits such a filtration if and only if it is nilpotent.
 \end{proof}

 Write $K(A)$ and $K_{\text{fin}}(A)$ for the Grothendieck groups of the categories of finitely-generated $A$-modules and finite-dimensional $A$-modules respectively. The Euler form determines a bilinear pairing $\chi_A\colon K(A)\times K_{\text{fin}}(A)\to \ZZ$ given by
\[
\chi_A(E,F)=\sum_{i\in \ZZ} (-1)^i \dim_{\CC} \Ext_A^i(E,F).
\]
 The numerical Grothendieck group $\Knum(A)$ of finitely-dimensional $A$-modules is defined to be the quotient $\Knum(A):= K_{\text{fin}}(A)/K(A)^\perp$. For any finite-dimensional $A$-module $V$, the decomposition \eqref{eqn:Gmodule} shows that the class of $V$ satisfies $[V]=\sum_{\rho\in \Irr(G)} (\dim V_\rho) [S_\rho]\in \Knum(A)$, and the map
 \begin{equation}
     \label{eqn:KnumA}
     d\colon \Knum(A)\longrightarrow R(G)
 \end{equation}
 sending the class of $V$ to its dimension vector $d(V)$ is a $\ZZ$-linear isomorphism (injectivity follows as in the proof of \cite[Lemma~7.1.1]{BCZ17}).

 \subsection{The $G$-Hilbert scheme}
   The $G$-Hilbert scheme, denoted $\ghilb(\CC^3)$, is defined to be the fine moduli space of \emph{$G$-clusters}, that is, $G$-invariant subschemes of $\CC^3$ for which $H^0(\mathcal{O}_Z)$ is isomorphic as a $\CC[G]$-module to $\CC[G]$. For every $G$-cluster $Z$, the sheaf $\mathcal{O}_Z$ is a $G$-equivariant coherent sheaf, and there is a surjective $A$-module homomorphism $A \to H^0(\mathcal{O}_Z)$ making $H^0(\mathcal{O}_Z)$ a cyclic $A$-module that is generated by the image of $1\otimes e\in A$, where $e=\frac{1}{\vert G\vert} \sum_{g\in G} g\in \CC[G]$ is the idempotent corresponding to the trivial representation. Conversely,  an $A$-module $V$ is the space of sections of a $G$-cluster if there is a surjective $A$-module homomorphism $Ae \to V$ satisfying $\dim V_\rho=\dim\rho$ for each $\rho\in \Irr(G)$.

   There is a Hilbert--Chow morphism 
  \[
  \tau\colon Y:= \ghilb(\mathbb{C}^3)\longrightarrow \mathbb{C}^3/G
  \]
  that sends each $G$-cluster to the $G$-orbit that supports it. Let $\mathcal{Z}\subseteq Y\times \CC^3$ denote the universal subscheme, and write $p$ and $q$ for the restriction to $\mathcal{Z}$ of the first and second projections respectively. Let $\pi\colon \CC^3\to \CC^3/G$ denote the quotient morphism.  There is a commutative diagram of schemes
      \begin{equation}
 \label{eqn:BKRdiagram}
\begin{tikzcd}
  & \mathcal{Z}\ar[dl,swap,"p"] \ar[dr,"q"]& \\
Y\ar[dr,swap,"\tau"] & & \CC^3\ar[dl,"\pi"] \\
  & \mathbb{C}^3/G & 
  \end{tikzcd}
  \end{equation}
  where $q$ and $\tau$ are birational, and where $p$ is finite and flat. The sheaf $\mathcal{R}:=p_*\mathcal{O}_{\mathcal{Z}}$ is locally free of rank $|G|$ and admits a decomposition $\mathcal{R}\cong\bigoplus_{\rho\in \Irr(G)} \mathcal{R}_\rho\otimes \rho$ into indecomposable summands,
   where $\mathcal{R}_\rho$ is locally free of rank $\dim \rho$. 
   
 Let $D(Y)$ denote the bounded derived category of coherent sheaves on $Y$, and let $D(A)$ denote the bounded derived category of finitely-generated left $A$-modules. Identify the abelian category of $G$-equivariant coherent sheaves with the category of finitely generated left $A$-modules. We write
 \begin{equation}
  \label{eqn:BKR}
 \Phi(-):=\Rderived q_*p^*(-)\colon D(Y)\longrightarrow D(A),
   \end{equation}
 for the Fourier--Mukai transform with kernel $\mathcal{O}_{\mathcal{Z}}$ that was studied by Bridgeland--King--Reid~\cite{BKR01}.
    
\begin{theorem}[Bridgeland--King--Reid]
\label{thm:BKR}
The Hilbert--Chow morphism $\tau\colon Y\to \mathbb{C}^3/G$ is a projective crepant resolution, and the functor $\Phi$ from \eqref{eqn:BKR} is an equivalence of derived categories. In addition, the restriction of $\Phi$ to the full subcategory $D_{0}(Y)$ of objects supported on $\tau^{-1}(\pi(0))$ is an equivalence
 \[
 \Phi_0\colon D_0(Y)\longrightarrow \Dnil(A),
 \] where 
 $\Dnil(A)$ is the full subcategory of objects comprising nilpotent $A$-modules.
\end{theorem}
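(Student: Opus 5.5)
The first two claims are the main theorem of Bridgeland--King--Reid~\cite{BKR01}, so I will recall its structure rather than reprove it. The statement about $\Phi_0$ then follows from support considerations. Throughout I take as input that $Y=\ghilb(\CC^3)$ is irreducible of dimension three; BKR deduce this for $G\subset\SL(3,\CC)$ because the fibres of $\tau$ have dimension at most two.

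For the equivalence, I would show that $\Phi$ is fully faithful using the Bondal--Orlov/Bridgeland criterion applied to the spanning class of skyscrapers $\mathcal{O}_y$, $y\in Y$. Here $\Phi(\mathcal{O}_y)=\mathcal{O}_{Z_y}$ is the $G$-cluster indexed by $y$. What must be checked is
\[
\Ext^i_{A}(\mathcal{O}_{Z_y},\mathcal{O}_{Z_{y'}})=0 \quad\text{for } y\neq y' \text{ and all } i,
\]
together with the usual condition at $y=y'$.
\begin{itemize}
\item Degree $0$ vanishes because both clusters are quotients of $\mathcal{O}_{\CC^3}$ with the same character $\CC[G]$.
\item Degree $3$ follows from Serre duality on $\CC^3$, which is $G$-equivariantly trivial because $G\subset\SL(3,\CC)$.
\item $\chi=0$ holds since the classes depend only on the regular representation.
\end{itemize}
These leave only degrees $1$ and $2$, which must vanish together. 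The key step, and the main obstacle, is the intersection-theorem argument. Let $\Gamma\subset Y\times Y$ be the support of the kernel of $\Phi^{-1}\circ\Phi$ off the diagonal. The new intersection theorem gives $\operatorname{codim}\Gamma\le 1$ whenever $\Gamma$ is nonempty, whereas the fibre-dimension bound $\dim(Y\times_{\CC^3/G}Y)\le 4$ forces the codimension to be at least $2$. Hence $\Gamma$ is empty. Once $\Phi$ is fully faithful, triviality of the canonical bundle on both sides and indecomposability of $D(Y)$ make $\Phi$ an equivalence.

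Crepancy and smoothness come out of the same argument. The equivalence shows $Y$ is smooth, since $\mathcal{O}_y$ then has finite homological dimension. Compatibility of $\Phi$ with Serre functors gives $\omega_Y\cong\mathcal{O}_Y$. Projectivity of $\tau$ holds because $\ghilb$ is a GIT quotient, namely $\mathcal{M}_\theta(A,v)$, which is projective over the affine quotient $\mathcal{M}_0=\CC^3/G$.

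For $\Phi_0$, the diagram \eqref{eqn:BKRdiagram} commutes, so an object supported on $\tau^{-1}(\pi(0))$ has $p^*$ supported on $p^{-1}\tau^{-1}(\pi(0))=q^{-1}\pi^{-1}(\pi(0))$. Since $\pi^{-1}(\pi(0))=\{0\}$, its image under $\Phi$ is supported at $0$. By Lemma~\ref{lem:abelianequiv}, its cohomology modules are therefore nilpotent. Conversely, the inverse $\Psi=\Rderived p_*(\mathcal{O}_{\mathcal{Z}}^\vee\otimes q^*(-))^G$ is a Fourier--Mukai transform, up to shift, with kernel supported on $\mathcal{Z}$. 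It sends objects supported at $0$ to objects supported on $p(q^{-1}(0))\subseteq\tau^{-1}(\pi(0))$. Thus $\Phi$ and $\Psi$ restrict to mutually inverse functors $D_0(Y)\rightleftarrows\Dnil(A)$. One point needs care: $\Dnil(A)$ must be taken to mean complexes with nilpotent cohomology. With that reading, the support argument applies directly.
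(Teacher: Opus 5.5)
Your proposal is correct. For the first sentence of the theorem you do what the paper does, namely rely on Bridgeland--King--Reid \cite{BKR01}. The genuine difference is in the claim about $\Phi_0$.

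The paper cites \cite[Section~6, Step~7]{BKR01} for the fact that $\Phi$ restricts to an equivalence from $D_0(Y)$ onto the subcategory $D^G_0(\CC^3)$ of complexes supported at the origin. It then invokes Lemma~\ref{lem:abelianequiv} to identify $D^G_0(\CC^3)$ with $\Dnil(A)$. You instead derive the restriction from the global equivalence by tracking supports:
\begin{itemize}
\item commutativity of \eqref{eqn:BKRdiagram} and $\pi^{-1}(\pi(0))=\{0\}$ give $p^{-1}\tau^{-1}(\pi(0))=q^{-1}(0)$, so $\Phi$ sends $D_0(Y)$ to complexes with cohomology supported at $0$, hence nilpotent;
\item the kernel of $\Psi$ is supported on $\mathcal{Z}$ and $p(q^{-1}(0))\subseteq\tau^{-1}(\pi(0))$, so $\Psi$ sends $\Dnil(A)$ back into $D_0(Y)$.
\end{itemize}
This makes the last assertion self-contained: it needs only the global equivalence and the supports of the two kernels. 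The paper's route is shorter but depends on a specific step inside BKR. Both routes use Lemma~\ref{lem:abelianequiv} in the same way, and your reading of $\Dnil(A)$ as complexes with nilpotent cohomology is the one the paper intends.

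Your sketch of the BKR proof is not needed, but several points in it are inaccurate as written:
\begin{itemize}
\item The intersection-theorem step concerns the kernel of $\Phi^L\circ\Phi$ for the left adjoint $\Phi^L$. The kernel of $\Phi^{-1}\circ\Phi$ is just $\mathcal{O}_\Delta$, so as written the step is vacuous.
\item The equivalence criterion needs indecomposability of the target $D^G(\CC^3)$, not of $D(Y)$. This holds because $\CC^3$ is connected and $G$ acts faithfully.
\item On the $Y$ side the criterion needs only $\mathcal{O}_y\otimes\omega_Y\cong\mathcal{O}_y$. Invoking triviality of $\omega_Y$ there, and later deducing $\omega_Y\cong\mathcal{O}_Y$ from the equivalence, is circular. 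Likewise, smoothness of $Y$ must be established before Serre functors on $D(Y)$ are used, not read off from the equivalence afterwards.
\item $\mathcal{M}_0(A,v)$ can be strictly larger than $\CC^3/G$, which in general is only its irreducible component containing the free orbits. Projectivity of $\tau$ still follows, because the projective morphism $Y\to\mathcal{M}_0(A,v)$ factors as $\tau$ followed by a separated morphism.
\end{itemize}
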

 \begin{proof}
 The results of the first sentence are explicit in \cite{BKR01}. In addition, the restriction $\Phi_0$ of the Fourier--Mukai transform from \eqref{eqn:BKR} to $D_0(Y)$ gives an equivalence between $D_0(Y)$ and the full subcategory $D^G_0(\mathbb{C}^3)$ of the bounded derived category of $G$-equivariant coherent sheaves on $\mathbb{C}^3$ consisting of objects supported at the origin in $\mathbb{C}^3$ \cite[Section~6, Step~7]{BKR01}. The equivalence of abelian categories from Lemma~\ref{lem:abelianequiv} allows us to identify $D^G_0(\mathbb{C}^3)$ with $\Dnil(A)$ as triangulated categories. 
 \end{proof}

  For each $y\in Y$, the equivalence $\Phi$ sends the skyscraper sheaf $\mathcal{O}_y$ to the $G$-equivariant coherent sheaf $\mathcal{R}\vert_y$ obtained as the fibre of the tautological bundle $\mathcal{R}$ on $Y=\ghilb(\CC^3)$ over the point $y$, and we identify this in turn with the $A$-module \[
  V_y:=H^0(\mathcal{R}\vert_y)
  \]
  whose dimension vector is equal to the regular representation of $G$.
 
\begin{corollary}
\label{cor:fibrenilpotent}
 Let $y\in Y$. Then $y\in \tau^{-1}(\pi(0))$ if and only if the $A$-module $V_y$ is nilpotent.
 \end{corollary}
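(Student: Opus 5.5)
The plan is to turn the statement into a comparison of supports: a point $y\in Y$ lies in $\tau^{-1}(\pi(0))$ exactly when the $G$-cluster $Z_y$ is supported at the origin, and $Z_y$ being supported at the origin should be equivalent to $V_y=H^0(\mathcal{O}_{Z_y})$ being nilpotent. I will argue this directly, using Lemma~\ref{lem:abelianequiv} for the module-theoretic side.

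\emph{Translating the condition on $y$.} By definition $\tau(y)$ is the $G$-orbit supporting $Z_y$. The fibre $\pi^{-1}(\pi(0))=\{0\}$ is a single point, since $0$ is fixed by $G$. Hence $y\in\tau^{-1}(\pi(0))$ if and only if $\supp Z_y=\{0\}$. Equivalently, this says the $G$-equivariant coherent sheaf $\mathcal{R}\vert_y\cong\mathcal{O}_{Z_y}$ is supported at $0\in\CC^3$.

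\emph{Forward direction.} Suppose $\mathcal{R}\vert_y$ is supported at $0$. Lemma~\ref{lem:abelianequiv} says the global sections functor sends $G$-equivariant coherent sheaves supported at the origin to nilpotent $A$-modules. So $V_y=H^0(\mathcal{R}\vert_y)$ is nilpotent.

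\emph{Reverse direction.} Suppose $V_y$ is nilpotent. I want the support of the corresponding $G$-equivariant sheaf to be $\{0\}$.
\begin{itemize}
\item By Lemma~\ref{lem:abelianequiv} and its proof, $V_y$ admits a filtration whose factors are the simple modules $S_\rho$.
\item Each $S_\rho$ corresponds to $\mathcal{O}_0\otimes\rho$, which is supported at the origin.
\item Support is preserved under extensions, so the sheaf $\mathcal{O}_{Z_y}$ is supported at $0$.
\end{itemize}
This also has a concrete reading. The arrows of the McKay quiver encode the action of $x,y,z$. Nilpotency says every monomial of large degree acts by zero on $V_y$. Hence $\mathfrak{m}_0^n$ annihilates $\mathcal{O}_{Z_y}$ for some $n$, which again forces $\supp Z_y=\{0\}$.

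\emph{Main obstacle.} The delicate point is only the bookkeeping in the Morita equivalence $A\simeq A^\flat$. One must check that "every path of length greater than $n$ acts by zero" matches "$\mathfrak{m}_0^n$ acts by zero on the underlying $\CC[x,y,z]$-module". I would handle this by citing the equivalence already recorded in Lemma~\ref{lem:abelianequiv}, rather than reproving it, so that both directions follow from that lemma together with the first step.
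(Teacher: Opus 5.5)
Your proof is correct, but it takes a different route from the paper.

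The paper argues on $Y$. It notes that $y\in\tau^{-1}(\pi(0))$ if and only if $\mathcal{O}_y$ lies in $D_0(Y)$. It then invokes the part of Theorem~\ref{thm:BKR} saying that $\Phi$ restricts to an equivalence $\Phi_0\colon D_0(Y)\to\Dnil(A)$, a statement that itself rests on BKR's Step~7 together with Lemma~\ref{lem:abelianequiv}. For the reverse implication, it implicitly combines essential surjectivity of $\Phi_0$ with full faithfulness of $\Phi$ to conclude that $\mathcal{O}_y$ lies in $D_0(Y)$.

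You argue on $\CC^3$ instead. The pointwise definition of the Hilbert--Chow morphism, together with $\pi^{-1}(\pi(0))=\{0\}$, turns the condition into $\supp Z_y=\{0\}$. From there you need only the abelian-level Lemma~\ref{lem:abelianequiv}. Your filtration-and-extension argument does the job that full faithfulness does in the paper: it ensures that membership in the subcategory of objects supported at the origin is reflected, not merely preserved.

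What each route buys:
\begin{itemize}
\item Yours is more elementary. It never uses the fact that the BKR transform preserves support over $\CC^3/G$, only the moduli description of $\tau$ on $G$-clusters.
\item The paper's is shorter once Theorem~\ref{thm:BKR} is in hand. It is also the argument that extends verbatim to arbitrary objects, which is exactly what Corollary~\ref{cor:PsiSrhofibre} needs. There, $\Psi(S_\rho)$ has no cluster description for your pointwise argument to use.
\end{itemize}
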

 \begin{proof}
 We have $y\in \tau^{-1}(\pi(0))$ if and only if $\supp(\mathcal{O}_y)\subseteq \tau^{-1}(\pi(0))$. Theorem~\ref{thm:BKR} shows that this holds if and only if $V_y=\Phi(\mathcal{O}_y)$ is nilpotent.
 \end{proof}
 
 Of particular interest to us is the functor $\Psi\colon D(A)\rightarrow D(Y)$ that is quasi-inverse to $\Phi$.
 
\begin{corollary}
\label{cor:PsiSrhofibre}  For each $\rho\in \Irr(G)$, the support of the object $\Psi(S_\rho)$ is contained in $\tau^{-1}(\pi(0))$.
\end{corollary}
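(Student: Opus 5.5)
By Lemma~\ref{lem:abelianequiv}, the simple module $S_\rho$ is nilpotent, since it corresponds to $\mathcal{O}_0\otimes\rho$, which is supported at the origin. Hence $S_\rho$, viewed as a complex concentrated in degree zero, is an object of $\Dnil(A)$.

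Theorem~\ref{thm:BKR} gives the equivalence $\Phi_0\colon D_0(Y)\to\Dnil(A)$, obtained by restricting $\Phi$. Essential surjectivity of $\Phi_0$ therefore supplies an object $E\in D_0(Y)$ with $\Phi(E)\cong S_\rho$. Since $\Psi$ is quasi-inverse to $\Phi$, we get
\[
\Psi(S_\rho)\cong\Psi(\Phi(E))\cong E.
\]
By definition of $D_0(Y)$, the object $E$ is supported on $\tau^{-1}(\pi(0))$, and so the support of $\Psi(S_\rho)$ is contained in $\tau^{-1}(\pi(0))$.

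The only point needing care is that $\Phi_0$ is genuinely the restriction of $\Phi$ to the full subcategory $D_0(Y)$. This is what allows us to identify $\Psi$ on $\Dnil(A)$ with a quasi-inverse of $\Phi_0$, and it holds by construction in Theorem~\ref{thm:BKR}.

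As an alternative, one can argue pointwise. For $y\notin\tau^{-1}(\pi(0))$ we have
\[
\Hom^i_{D(Y)}\big(\Psi(S_\rho),\mathcal{O}_y\big)\cong\Ext^i_A(S_\rho,V_y).
\]
By Corollary~\ref{cor:fibrenilpotent}, $V_y$ is not nilpotent, so it is supported at a $G$-orbit disjoint from the origin. Hence these groups vanish for all $i$, and $y\notin\supp\Psi(S_\rho)$.
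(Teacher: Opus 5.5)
Your main argument is correct and is the intended one. The paper states this corollary without proof, as an immediate consequence of Theorem~\ref{thm:BKR}: $S_\rho$ is nilpotent, and $\Phi_0\colon D_0(Y)\to\Dnil(A)$ is an equivalence obtained by restricting $\Phi$. Your pointwise alternative also works, using the support criterion via $\Hom^i(-,\mathcal{O}_y)$ that the paper invokes in Proposition~\ref{prop:support}, but it is not needed.
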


 Finally, we record a useful result that builds on work of the first author with Ishii~\cite{CI04}:

 \begin{lemma}
     \label{lem:curveintauinverse0}
 Each extremal ray of the Mori cone of curves $\overline{\operatorname{NE}}(Y)$ is spanned by the class of a proper curve $\ell$ that lies in the locus $\tau^{-1}(\pi(0))$.    
     \end{lemma}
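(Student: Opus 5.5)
Since $\tau\colon Y\to \CC^3/G$ is projective and $\CC^3/G$ is affine, the Mori cone $\overline{\operatorname{NE}}(Y)$ is generated by classes of proper curves contracted by $\tau$, each lying in some fibre $\tau^{-1}(\pi(x))$. The plan is to show that $\overline{\operatorname{NE}}(Y)$ is rational polyhedral and spanned by finitely many such classes, and then that every class can be realised, up to positive rational multiple, by a curve in the central fibre $\tau^{-1}(\pi(0))$.

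For the first part I will use the work of the first author with Ishii~\cite{CI04}. The linearisation map $L_C\colon \Theta\to \Pic(Y)_\QQ$, $\theta\mapsto \bigotimes_\rho \det(\mathcal{R}_\rho)^{\theta_\rho}$, is surjective, and it identifies the closure of $C$ with the nef cone $\Nef(Y)$. This works because $Y\cong\mathcal{M}_\theta(A,v)$ for $\theta\in C$, and $L_C(\theta)$ is ample for $\theta\in C$. Since $\overline{C}$ is a rational polyhedral cone cut out by finitely many walls, $\Nef(Y)$ is rational polyhedral. Its dual $\overline{\operatorname{NE}}(Y)$ is therefore rational polyhedral, and each extremal ray is dual to a facet of $\Nef(Y)$.

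For the second part, fix an extremal ray $R$ and a divisor class $L$ on the relative interior of the corresponding facet of $\Nef(Y)$, so that $L$ is nef and $L^\perp\cap\overline{\operatorname{NE}}(Y)=R$. By relative base-point-freeness (or directly from the GIT description, taking $L=L_C(\theta_0)$ with $\theta_0$ a generic point of a wall of $C$), some multiple of $L$ defines a projective contraction $f\colon Y\to Y'$ over $\CC^3/G$. This $f$ is the morphism $Y\to\mathcal{M}_{\theta_0}(A,v)$ given by variation of GIT, and it contracts exactly the curves whose classes lie in $R$. The main step is to show that the exceptional locus of $f$ meets $\tau^{-1}(\pi(0))$ in a curve. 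I will use the $\CC^*$-action on $\CC^3$ by scaling, which commutes with $G$ and so acts on $Y$, $Y'$ and $\CC^3/G$ compatibly with $f$ and $\tau$, with $\pi(0)$ the unique fixed point attracting every point of $\CC^3/G$ as $t\to 0$. Take any curve $\ell_1$ contracted by $f$ with $[\ell_1]\in R$. Its image $f(\ell_1)$ is a point $y'$ with $\tau'(y')=\pi(x)$. The limit $\lim_{t\to0} t\cdot \ell_1$ in the Chow variety, or Hilbert scheme, exists by properness of $\tau$ over $\CC^3/G$. This limit is an effective $1$-cycle with the same numerical class, supported in $\tau^{-1}(\pi(0))$, because $t\cdot\pi(x)\to\pi(0)$. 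Its components have classes in $\overline{\operatorname{NE}}(Y)$ summing to $[\ell_1]\in R$. Since $R$ is extremal, each component class lies in $R$, and any such component $\ell\subset\tau^{-1}(\pi(0))$ spans $R$.

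The obstacle I expect is justifying the limit cycle: its existence and numerical equivalence under the flat degeneration. Properness of $\tau$ and the flat-limit argument for $1$-cycles in the relative Hilbert scheme of $Y/(\CC^3/G)$ should handle this, since the relevant Hilbert scheme component is projective over the base. I also need that numerical class is preserved in the $\CC^*$-family. This holds because $\CC^*$ is connected, so $t\cdot\ell_1$ is algebraically equivalent to $\ell_1$ and hence so is the limit. If the degeneration argument proves awkward, the fallback is the conical structure: the non-isomorphism locus of $f$ is closed and $\CC^*$-stable, so each of its components maps to a $\CC^*$-stable closed subset of $\CC^3/G$ containing $\pi(0)$. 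This gives a positive-dimensional fibre of $f$ inside $\tau^{-1}(\pi(0))$ and hence a curve there with class in $R$.
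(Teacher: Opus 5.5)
Your proposal is essentially the paper's proof: degenerate a curve spanning $R$ along the $\CC^*$-action, extend $\CC^*\to\mathrm{Hilb}(\tau)$ over $0$ using properness over $\CC^3/G$, and observe that the limit lies in $\tau^{-1}(\pi(0))$ with the same numerical class. Your use of extremality to pass from a possibly reducible, non-reduced limit cycle to a single component fills in a step the paper leaves implicit.

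The one soft spot is your preliminary step, which the paper does not need, since it simply starts from a proper curve spanning the ray. Ampleness of $L_C(\theta)$ for $\theta\in C$ only gives $L_C(\overline{C})\subseteq\Nef(Y)$, not equality, so it does not yield polyhedrality. If you want polyhedrality and a curve in each extremal ray, take them from the relative cone theorem instead: $Y$ is of Fano type over the $\QQ$-factorial klt base $\CC^3/G$.
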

     \begin{proof}
    Let $C \subset Y$ be a proper curve which spans an extremal ray of $\overline{\operatorname{NE}}(Y)$.
    Since $\mathbb{C}^3/G$ is affine, $t := \tau(C)$ is a point. We're done if $t=\pi(0)$, so we suppose otherwise. 
    
    As $\tau \colon Y \to \mathbb{C}^3/G$ is $\mathbb{C}^*$-equivariant, we obtain a flat family
    $\lambda \cdot C \subset Y$ of proper curves parameterised by $\lambda \in \mathbb{C}^*$
    representing the same class 
    $[C] \in \overline{\operatorname{NE}}(Y)$.
    The morphism $\mathbb{C}^* \to \mathbb{C}^3/G$ sending $\lambda$ to
    $\tau(\lambda \cdot C) = \lambda \cdot t$ allows us to regard $\mathbb{C}^*$ as a scheme over  $\mathbb{C}^3/G$. We then have a moduli map
    \[
    \phi \colon \mathbb{C}^* \to \mathrm{Hilb}(\tau)
    \]
    over $\mathbb{C}^3/G$
    to the relative Hilbert scheme 
    $\mathrm{Hilb}(\tau)$ parameterising proper curves in the fibres of 
    $\tau \colon Y \to \mathbb{C}^3/G$.

    Let $H \subset \mathrm{Hilb}(\tau)$ be an irreducible component
    containing the image of $\phi$
    and let $\sigma \colon H \to \mathbb{C}^3/G$ be the structural morphism.
    Note that $\mathbb{C}^* \to \mathbb{C}^3/G$ extends to a morphism
    $\alpha \colon \mathbb{C} \to \mathbb{C}^3/G$ with $\alpha(0) = \pi(0)$.
    As $\tau$ is projective,  
    $\sigma$ is proper~\cite[Th\'eor\`eme 3.2]{GroHilb} (see also \cite[Lemma 0DPH]{Stacks}), so the morphism $\phi\colon \mathbb{C}^* \to H$ 
    extends to a morphism
    \[
    \widetilde{\phi} \colon\mathbb{C} \longrightarrow H
    \]over $\mathbb{C}^3/G$.
    We have 
    $\sigma(\widetilde{\phi}(0)) = \alpha(0) = \pi(0)$,
    so the proper curve $\ell \subset Y$ that 
    $\widetilde{\phi}(0)$ parameterises 
    lies in $\tau^{-1}(\pi(0))$. 
    As $\ell$ is a flat limit of the curves $\lambda \cdot C$ as 
    $\lambda \to 0$, it has the same curve class as $C$.
\end{proof}

 \subsection{Quiver moduli space description}
 The $G$-Hilbert scheme may be regarded as a fine moduli space of $A$-modules
 \cite{ItoNakajima00,Crawthesis}. 
 
 To see this, let $v:=\sum_\rho (\dim\rho)[S_\rho]\in \Knum(A)$ denote the vector that's identified with the regular representation of $G$ under the isomorphism  
 \eqref{eqn:KnumA}, and consider the rational vector space 
   \[
   \Theta:= \big\{\theta\in \Hom_{\ZZ}(\Knum(A), \QQ) \mid \theta(v)=0\big\}.
   \]
   For $\theta\in \Theta$ and $\rho\in \Irr(G)$, write $\theta_\rho:= \theta([S_\rho])$ for the component of $\theta$ indexed by $\rho$. Each finite-dimensional $A$-module $V$ determines the class $[V]=\sum_\rho (\dim V_\rho) [S_\rho]$ in $\Knum(A)$, and for $\theta\in \Theta$, we set
   \[
   \theta(V):=\theta([V]) = \sum_{\rho\in \Irr(G)}  (\dim V_\rho)\theta_\rho.\]
   An $A$-module $V$ of dimension vector $v$ is \emph{$\theta$-semistable} (resp.\ \emph{$\theta$-stable}) if  $\theta(V')\geq 0$ (resp.\ $\theta(V')>0$) for every nonzero, proper $A$-submodule $V'$ of $V$. We say $\theta$ is \emph{generic} if every $\theta$-semistable $A$-module of dimension vector $v$ is $\theta$-stable.

  We recalled earlier that the skew group algebra $A$ is Morita equivalent to a quotient of the path algebra of the McKay quiver $Q$ modulo a two-sided ideal of relations, so the category of $A$-modules is equivalent to the category of representations of $Q$ that satisfy the relations. As a result, for any $\theta\in \Theta$, work of King~\cite{King94} allows us to construct the coarse moduli space $\mathcal{M}_\theta(A,v)$ of $\theta$-semistable $A$-modules of dimension vector $v$ (up to S-equivalence) as a GIT quotient. For generic $\theta$, this GIT quotient $\mathcal{M}_\theta(A,v)$ is the fine moduli space of $\theta$-stable $A$-modules of dimension vector $v$ (up to isomorphism), in which case, $\mathcal{M}_\theta(A,v)$ carries a tautological locally-free sheaf $T=\bigoplus_{\rho\in \Irr(G)} T_\rho$ such that $\operatorname{rank} T_\rho=\dim \rho$ for all $\rho\in \Irr(G)$, with $T_{\rho_0}\cong \mathcal{O}_{\mathcal{M}_\theta(A,v)}$, together with a tautological $\CC$-algebra homomorphism $A\to \End(T)$.

The rational vector space $\Theta$ admits a finite, polyhedral wall-and-chamber decomposition in which GIT stability parameters $\theta, \theta'\in \Theta$ lie in the same GIT cone if and only if the notions of $\theta$-stability and $\theta'$-stability coincide. The union of the interiors of the top-dimensional cones is precisely the set of all  generic GIT parameters in $\Theta$. Every stability condition that is contained in the positive orthant $\Theta_+=\big\{\theta\in \Theta \mid \theta_\rho > 0 \text{ for }\rho\neq \rho_0\big\}$ is generic, so $\Theta_+$ is contained in a GIT chamber denoted $C$. For $\theta\in C$, it is well known (see, for example, \cite[Proposition~5.9]{Crawthesis}) that there is an isomorphism  
\[
\mathcal{M}_\theta(A,v)\cong \ghilb(\mathbb{C}^3)
\]
 that identifies the locally-free sheaves $\mathcal{R}_\rho\cong T_\rho$ for all $\rho\in \Irr(G)$. These isomorphisms identify the tautological sheaf $T$ on $\mathcal{M}_\theta(A,v)$ with a subbundle of the tautological sheaf $\mathcal{R}$ on $\ghilb(\mathbb{C}^3)
$ in which each indecomposable summand appears precisely once. In particular,  \cite[Corollary~2.4]{CIK18} implies that the sheaf $\mathcal{R}_\rho$ is globally generated for each $\rho \in \Irr(G)$.

\begin{lemma}
\label{lem:C}
  Let $\rho\neq \rho_0$. Either the hyperplane $(\theta_\rho=0)$ in $\Theta$ intersects the chamber $C$, or it is a supporting hyperplane that contains a codimension-one face of the closure $\overline{C}$.   
\end{lemma}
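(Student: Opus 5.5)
Suppose the hyperplane $(\theta_\rho=0)$ does not meet $C$; we must show it contains a codimension-one face of $\overline{C}$. The plan is to use convexity of $C$ together with the inclusion $\Theta_+\subseteq C$. Since $C$ is convex (a GIT chamber is the relative interior of a polyhedral cone) and contains $\Theta_+$, which lies in the open half-space $(\theta_\rho>0)$, connectedness of $C$ and the assumption $C\cap(\theta_\rho=0)=\emptyset$ give $C\subseteq(\theta_\rho>0)$. Hence $\overline{C}\subseteq(\theta_\rho\geq 0)$, so $(\theta_\rho=0)$ is a supporting hyperplane of $\overline{C}$. It remains to show that $\overline{C}\cap(\theta_\rho=0)$ has dimension $\dim\Theta-1$.

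For this I will use the closure of the positive orthant. The face $F_\rho:=\overline{\Theta_+}\cap(\theta_\rho=0)$ is a codimension-one face of $\overline{\Theta_+}$. To see this, note that $\Theta$ has coordinates $\theta_\sigma$ for $\sigma\neq\rho_0$, with $\theta_{\rho_0}$ determined by $\theta(v)=0$. In these coordinates, $\overline{\Theta_+}$ is the closed orthant $\{\theta_\sigma\geq 0 \text{ for all } \sigma\neq\rho_0\}$ of full dimension. Its facet $F_\rho$ is the set $\{\theta_\rho=0,\ \theta_\sigma\geq0 \text{ for } \sigma\neq\rho,\rho_0\}$, which has dimension $\dim\Theta-1$. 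Moreover $F_\rho\subseteq\overline{\Theta_+}\subseteq\overline{C}$. Therefore
\[
\overline{C}\cap(\theta_\rho=0)\supseteq F_\rho
\]
has dimension at least $\dim\Theta-1$, and so equals a codimension-one face of $\overline{C}$ (faces cut out by supporting hyperplanes of a full-dimensional polyhedral cone are faces, and this one is not all of $\overline{C}$ because $C\not\subseteq(\theta_\rho=0)$).

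The only delicate point is the first step, namely that a chamber is convex, so that it cannot cross the hyperplane without meeting it. I would justify this from the finite polyhedral wall-and-chamber structure recalled above: each chamber is an open polyhedral cone, cut out by finitely many strict linear inequalities $\theta(V')>0$, and is therefore convex. With that in hand, the dichotomy stated in the lemma follows. The two alternatives are mutually exclusive because a supporting hyperplane of $\overline{C}$ meets only the boundary, never the interior $C$.
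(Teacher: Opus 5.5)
Your proposal is correct and follows the paper's argument: both use $\Theta_+\subseteq C$ and the convexity of $C$ to get $C\subseteq(\theta_\rho>0)$ when the hyperplane misses $C$. Both then deduce codimension one from the inclusion $\overline{\Theta_+}\cap(\theta_\rho=0)\subseteq\overline{C}\cap(\theta_\rho=0)$, with you spelling out the convexity step, which the paper uses tacitly in its case $\theta_\rho<0$.
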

\begin{proof}
The hyperplane $(\theta_\rho=0)$ contains a codimension-one face of the closure $\overline{\Theta_+}$. If there exists $\theta\in C$ satisfying $\theta_\rho<0$, then $(\theta_\rho=0)$ intersects the (open) chamber $C$. Otherwise, $C$ lies in the half-space $(\theta_\rho>0)$, and hence $\overline{\Theta_+}\cap (\theta_\rho=0)$ is contained in $\overline{C}\cap (\theta_\rho=0)$. The former is of codimension-one, hence so is the latter.
\end{proof}

   \section{The first case of the trichotomy}
 We begin by highlighting the importance of the chamber $C$. Recall that for each $y\in Y$, we write $V_y$ for the $A$-module obtained as the fibre over $y$ of the tautological bundle $\mathcal{R}$ on
 $Y=\ghilb(\CC^3)$. 

 \begin{lemma}
 \label{lem:HomVySrho}
 For $y\in Y$, we have $\Hom_A(V_y,S_\rho)=0$ for $\rho\neq \rho_0$.
 \end{lemma}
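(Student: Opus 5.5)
Since $V_y=H^0(\mathcal{R}\vert_y)$ is the space of sections of a $G$-cluster, it is a cyclic $A$-module generated by the image of $1\otimes e$, where $e$ is the idempotent corresponding to the trivial representation. Equivalently, there is a surjection $Ae\to V_y$, and $V_y$ is generated as an $A$-module by its $\rho_0$-isotypic component $(V_y)_{\rho_0}\otimes\rho_0$. The plan is to use this generation property directly.

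Let $f\colon V_y\to S_\rho$ be an $A$-module homomorphism with $\rho\neq\rho_0$. First, $f$ is in particular $G$-equivariant. Since $S_\rho=\CC\otimes\rho$ is $\rho$-isotypic as a $\CC[G]$-module, Schur's lemma shows that $f$ vanishes on every isotypic component $V_{y,\sigma}\otimes\sigma$ with $\sigma\neq\rho$. In particular, $f$ kills the generator $e\cdot v$, where $v$ is the image of $1\otimes e$, because it lies in the $\rho_0$-component and $\rho_0\neq\rho$. Second, every element of $V_y$ has the form $a\cdot v$ with $a\in A$, and $A$-linearity gives $f(a\cdot v)=a\cdot f(v)=0$. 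Hence $f=0$.

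An equivalent and slightly cleaner formulation avoids elements. Because $Ae\to V_y$ is surjective, we have an injection $\Hom_A(V_y,S_\rho)\hookrightarrow\Hom_A(Ae,S_\rho)\cong eS_\rho$. The space $eS_\rho$ is the $G$-invariant part of $\rho$, which is zero for nontrivial irreducible $\rho$.

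None of the steps is a serious obstacle. The only point needing care is that the generator of $V_y$ really lies in the $\rho_0$-isotypic part. This is exactly the description of $G$-clusters recalled above: $H^0(\mathcal{O}_Z)$ is a quotient of $\CC[x,y,z]$ and is generated by the constant function $1$, which is $G$-invariant.

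An alternative route would use $\theta$-stability for $\theta\in C$. The kernel of a surjection $V_y\to S_\rho$ is a submodule $V'$ with $\theta(V')=-\theta_\rho$, which is negative for $\theta\in\Theta_+\subset C$, contradicting stability. This gives the same conclusion, but the cyclicity argument above is more direct.
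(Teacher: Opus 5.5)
Your proof is correct, but your main argument takes a different route from the paper. The paper's proof is essentially your ``alternative route'': it uses the identification $Y\cong\mathcal{M}_\theta(A,v)$, so each $V_y$ is $\theta$-stable. The kernel of a nonzero, hence surjective, map $V_y\to S_\rho$ would then be a nonzero proper submodule with $\theta$-value $-\theta_\rho<0$, a contradiction. This inequality needs $\theta_\rho>0$, so one should take $\theta\in\Theta_+$ as you did, rather than an arbitrary $\theta\in C$. When $(\theta_\rho=0)$ meets $C$, the chamber contains parameters with $\theta_\rho<0$.

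Your main argument uses only the definition of a $G$-cluster: $V_y$ is a quotient of $Ae$, so $\Hom_A(V_y,S_\rho)\hookrightarrow\Hom_A(Ae,S_\rho)\cong eS_\rho=\rho^G=0$. This is more elementary and self-contained, since it never invokes the moduli-space description of $\ghilb(\CC^3)$. The two arguments express the same fact: for dimension vector $v$ (where $\dim\rho_0=1$), $\theta$-stability with $\theta\in\Theta_+$ is equivalent to being generated by the $\rho_0$-isotypic component. The paper phrases it via stability to highlight the role of the chamber $C$. That viewpoint drives the rest of the argument, for instance in Proposition~\ref{prop:type1}, where one chooses $\theta\in C$ with $\theta_\rho=0$.
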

 \begin{proof}
 For $\theta\in C$, the kernel $K$ of any nonzero homomorphism $V_y\to S_\rho$ would satisfy $\theta(K)=\theta(V_y) - \theta(S_\rho) <0$ because $\rho\neq\rho_0$. However, the $A$-module $V_y$ is $\theta$-stable, a contradiction.
   \end{proof}
 
 To compute the cohomology sheaves of the object $\Psi(S_\rho)$ in $D(Y)$,  we begin by taking a projective resolution of $S_\rho$ as a left $A$-module
 and then we apply $\Psi$. The global dimension of $A$ is three, so $\Psi(S_\rho)$ is quasi-isomorphic to a complex of locally-free sheaves on $Y$ of the form
 \[
 P^{-3}\longrightarrow P^{-2}\longrightarrow P^{-1}\longrightarrow P^0.
 \]
 In fact, the following stronger statement holds.
 
 \begin{lemma}
 \label{lem:Hom3}
 For $\rho\neq \rho_0$, the object $\Psi(S_\rho)$ is quasi-isomorphic to a complex of locally-free sheaves 
 \[
 P^{-2}\longrightarrow P^{-1}\longrightarrow P^0.
 \]
 Moreover, each irreducible component of the support of $\Psi(S_\rho)$ has codimension at most two.
  \end{lemma}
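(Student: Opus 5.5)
We compute $\Psi(S_\rho)$ fibrewise. Because $\Phi$ is the Fourier--Mukai transform with kernel $\mathcal{O}_{\mathcal{Z}}$ and $\Phi(\mathcal{O}_y)=V_y$, the adjunction and Serre duality for the Calabi--Yau threefold $Y$ (with Calabi--Yau duality for the $3$-CY algebra $A$) give, for every $y\in Y$ and every $i$,
\[
\Hom_{D(Y)}\big(\Psi(S_\rho),\mathcal{O}_y[i]\big)\cong \Ext^i_A(S_\rho,V_y)\cong \Ext^{3-i}_A(V_y,S_\rho)^\vee .
\]
Taking $i=3$, the right-hand side is $\Hom_A(V_y,S_\rho)^\vee$, which vanishes for every $y\in Y$ by Lemma~\ref{lem:HomVySrho}, since $\rho\neq\rho_0$.

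Now take a bounded locally free representative $P^{-3}\to P^{-2}\to P^{-1}\to P^0$ of $\Psi(S_\rho)$, obtained by applying $\Psi$ to a projective resolution of $S_\rho$ of length three. Then $\Hom(\Psi(S_\rho),\mathcal{O}_y[3])$ is the cokernel of the fibre map $(P^{-2})^\vee|_y\to (P^{-3})^\vee|_y$. Its vanishing for all closed points $y$ means that the dual map $(P^{-2})^\vee\to(P^{-3})^\vee$ is surjective at every fibre, hence surjective as a map of sheaves by Nakayama's lemma, with locally free kernel. Dualising again, $P^{-3}\to P^{-2}$ is a split injection of vector bundles locally, with locally free cokernel $P'^{-2}$. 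Replacing $P^{-3}\to P^{-2}$ by $P'^{-2}$ gives a quasi-isomorphic complex $P'^{-2}\to P^{-1}\to P^0$ of locally free sheaves, which proves the first claim.

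For the codimension statement, I would use the standard fact that a complex of locally free sheaves of length $r$ (here amplitude $[-2,0]$, so $r=2$) on a regular scheme has all components of its support of codimension at most $r$. This follows from the Auslander--Buchsbaum formula, or from the local version of the new intersection theorem. Concretely, if $x$ is a generic point of a component of the support of codimension $c$, then over the local ring $\mathcal{O}_{Y,x}$ the complex has nonzero homology of finite length. Choosing a minimal free complex, a nonzero finite-length homology forces length at least $\dim \mathcal{O}_{Y,x}=c$. Alternatively, if $c=3$ at a closed point $y$, then $\Psi(S_\rho)$ near $y$ would have $\mathcal{E}xt^3$-type local duality nonzero, i.e.\ $\Hom(\Psi(S_\rho),\mathcal{O}_y[3])\neq 0$ by Grothendieck local duality, contradicting the vanishing above. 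Both cases of the statement therefore follow.

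The main obstacle is justifying the codimension bound cleanly. I would present the local duality version: for a complex supported at an isolated closed point of the regular threefold, the top local Ext into $\mathcal{O}_y$ is nonzero, which contradicts the vanishing established at $i=3$.
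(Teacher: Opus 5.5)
Your proposal is correct and follows essentially the same route as the paper. The paper also derives $\Hom^3_{D(Y)}(\Psi(S_\rho),\mathcal{O}_y)\cong \Ext^3_A(S_\rho,V_y)\cong\Hom_A(V_y,S_\rho)^*=0$ from CY3 duality and Lemma~\ref{lem:HomVySrho}, then cites Bridgeland--Maciocia \cite[Proposition~5.4 and Corollary~5.5]{BM02}, whose content is exactly your fibrewise Nakayama splitting of $P^{-3}\to P^{-2}$ and your intersection-theorem bound on the codimension of the support.
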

  \begin{proof}
  Let $y\in Y$ and $\rho\neq \rho_0$. Since $A$ is a CY3 algebra, we obtain
 \begin{equation}
 \label{eqn:Hom3}
 \Hom_{D(Y)}^3(\Psi(S_\rho), \mathcal{O}_y) \cong \Ext^3_A(S_\rho, V_y) \cong \Hom_A(V_y,S_\rho)^* = 0
 \end{equation}
 by Lemma~\ref{lem:HomVySrho}. Since $Y$ is quasi-projective, we may apply the intersection theorem from Bridgeland--Maciocia~\cite[Proposition~5.4]{BM02} to obtain the first statement. In particular, $\Psi(S_\rho)$ has homological dimension at most two. The second statement now follows from \cite[Corollary~5.5]{BM02}.
 \end{proof}
 
 We are already in a position to prove Conjecture~\ref{conj:CLintro} for any irreducible representation $\rho\neq \rho_0$ such that $(\theta_\rho=0)$ is not a supporting hyperplane of the chamber $C$. 
   
 \begin{proposition}
 \label{prop:type1}
 For $\rho\neq \rho_0$, suppose that $(\theta_\rho=0)$ is not a supporting hyperplane of $C$. Then 
 \[
 H^k\big(\Psi(S_\rho)\big)=0 \quad\text{for }k\neq -1.
 \]
 Moreover, every irreducible component of the support of $\Psi(S_\rho)$ is of codimension one.
 \end{proposition}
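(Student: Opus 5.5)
Since $(\theta_\rho=0)$ is not a supporting hyperplane of $C$, Lemma~\ref{lem:C} shows that it meets the open chamber $C$. Because $C$ is open and the hyperplane is not a supporting one, there is then $\theta\in C$ with $\theta_\rho<0$. The plan is to turn this into the vanishing
\[
\Hom_A(S_\rho,V_y)=0\qquad\text{for all } y\in Y,
\]
dual to Lemma~\ref{lem:HomVySrho}. The vanishing itself is immediate. The image of a nonzero map $S_\rho\to V_y$ is a submodule isomorphic to $S_\rho$, since $S_\rho$ is simple. It is proper because $V_y$ has dimension vector $v\neq d(S_\rho)$. It satisfies $\theta(S_\rho)=\theta_\rho<0$, which contradicts the $\theta$-stability of $V_y$.

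By the equivalence $\Phi$, this gives
\[
\Hom_{D(Y)}(\Psi(S_\rho),\mathcal{O}_y)\cong \Hom_A(S_\rho,V_y)=0\qquad\text{for all } y\in Y.
\]
I then use the locally-free model $P^{-2}\to P^{-1}\to P^0$ of $\Psi(S_\rho)$ from Lemma~\ref{lem:Hom3}. Its top cohomology $H^0(\Psi(S_\rho))$ is the cokernel of $P^{-1}\to P^0$. For any complex concentrated in degrees $\le 0$ one has
\[
\Hom_{D(Y)}(\Psi(S_\rho),\mathcal{O}_y)\cong \Hom\big(H^0(\Psi(S_\rho)),\mathcal{O}_y\big).
\]
Hence $H^0(\Psi(S_\rho))\otimes k(y)=0$ for every $y$, and Nakayama's lemma gives $H^0(\Psi(S_\rho))=0$. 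So $P^{-1}\to P^0$ is surjective, and its kernel $K$ is locally free. It follows that $\Psi(S_\rho)$ is quasi-isomorphic to a two-term complex of locally free sheaves
\[
d\colon P^{-2}\longrightarrow K
\]
in degrees $-2,-1$.

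Next I would kill $H^{-2}$. By Corollary~\ref{cor:PsiSrhofibre}, $\Psi(S_\rho)$ is supported in the proper subset $\tau^{-1}(\pi(0))$, so $d$ is an isomorphism at the generic point of the irreducible variety $Y$. In particular $\rank P^{-2}=\rank K$. The kernel $H^{-2}=\ker d$ is a subsheaf of a locally free sheaf, hence torsion-free, yet it vanishes generically; therefore $H^{-2}=0$. Thus $d$ is injective, $H^k(\Psi(S_\rho))=0$ for $k\neq -1$, and
\[
\Psi(S_\rho)\cong \operatorname{coker}(d)[1].
\]

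Finally, for the support: $\operatorname{coker}(d)$ is the cokernel of an injective map between locally free sheaves of equal rank. Its support is therefore the zero locus of $\det d$, which is a Cartier divisor, so every irreducible component has codimension exactly one. Alternatively, $\operatorname{coker}(d)$ has homological dimension one, so by Auslander--Buchsbaum it has no associated points of codimension $\ge 2$. This is consistent with the bound of codimension at most two in Lemma~\ref{lem:Hom3}.

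I expect the only delicate step to be the derived-category bookkeeping: identifying $\Hom^0(\Psi(S_\rho),\mathcal{O}_y)$ with $\Hom(H^0,\mathcal{O}_y)$, and making sure the truncation to $[P^{-2}\to K]$ is legitimate. Both are standard once the vanishing at every point $y$ is available.
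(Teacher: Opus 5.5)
Your proof is correct and follows the paper's argument. Stability of $V_y$ for some $\theta\in C$ gives $\Hom_A(S_\rho,V_y)=0$ for all $y$, which reduces $\Psi(S_\rho)$ to a two-term locally free complex in degrees $-2,-1$; the differential is injective because the support lies in $\tau^{-1}(\pi(0))$, and codimension one follows from the length-one resolution. The differences are cosmetic: you truncate the three-term complex of Lemma~\ref{lem:Hom3} by hand via Nakayama and use the determinant locus, where the paper re-invokes the Bridgeland--Maciocia intersection theorem and \cite[Corollary~5.5]{BM02} (and takes $\theta_\rho=0$ rather than $\theta_\rho<0$, which already suffices).
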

 
\begin{proof}
 Lemma~\ref{lem:C} implies that there exists $\theta\in C$ satisfying $\theta_\rho=0$, so $S_\rho$ is not isomorphic to a submodule of $V_y$ for any $y\in Y$. Since $S_\rho$ is simple, it follows that 
 \[
 0=\Hom_A(S_\rho,V_y) \cong \Hom_{D(Y)}(\Psi(S_\rho),\mathcal{O}_y) \quad \text{for all }y\in Y.
 \]
 Applying the same logic as in Lemma~\ref{lem:Hom3} shows that  $\Psi(S_\rho)$ is quasi-isomorphic to a complex of locally-free sheaves of the form
 \[
 P^{-2}\stackrel{f}{\longrightarrow} P^{-1}.
 \]
 The object $\Psi(S_\rho)$ has proper support because it is a closed subset of the proper fibre $\tau^{-1}(\pi(0))$ by Corollary~\ref{cor:PsiSrhofibre}. so $\ker(f)=0$. This proves the first statement, while the statement about the codimension follows from \cite[Corollary~5.5]{BM02}. 
\end{proof}

 \section{The unstable locus in the fibre over the origin}
  It remains to prove Conjecture~\ref{conj:CLintro} for $\rho\neq \rho_0$ such that the hyperplane $(\theta_\rho=0)$ in $\Theta$ is a supporting hyperplane of the closure of the GIT chamber $C$. This means that the closed cone
 \[
 W_\rho:= \overline{C}\cap (\theta_\rho=0)
 \]
 is a codimension-one face of the closure of $C$; we say that  $W_\rho$ is a \emph{GIT wall} of $C$. Each stability condition $\eta\in \Theta$ that lies in the relative interior of $W_\rho$ satisfies $\eta_\rho=0$ and $\eta_\sigma>0$ for all $\sigma\not\in\{\rho_0,\rho\}$.   For any such $\eta\in \relint(W_\rho)$, define the closed set
  \begin{equation}
     \label{eqn:Zrho} 
 Z_\rho := \{ y\in Y \mid V_y\text{ is a strictly }\eta\text{-semistable}\}\cap \tau^{-1}\big(\pi(0)\big)
  \end{equation}
  in $Y$. Note that the locus $\{ y\in Y \mid V_y\text{ is a strictly }\eta\text{-semistable}\}$ is often contained in $\tau^{-1}(\pi(0))$, but this is not always the case, see \cite[Example~1]{BCQ21}. In particular, we cannot omit taking the intersection with $\tau^{-1}\big(\pi(0)\big)$ in \eqref{eqn:Zrho}.

 \begin{lemma}
 \label{lem:Homnonzero}
 The locus $Z_\rho$ coincides with the locus of points $y\in Y$ such that $V_y$ is nilpotent and strictly $\eta$-semistable. In particular,   for each $y\in Z_\rho$, there is a short exact sequence
 \begin{equation}
 \label{eqn:sesV_y}
 0\longrightarrow S_\rho
 \longrightarrow V_y\longrightarrow Q_y\longrightarrow 0
 \end{equation}
 of $\eta$-semistable $A$-modules.\end{lemma}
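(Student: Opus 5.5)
The first sentence is immediate from Corollary~\ref{cor:fibrenilpotent}. By definition \eqref{eqn:Zrho}, $y\in Z_\rho$ if and only if $V_y$ is strictly $\eta$-semistable and $y\in\tau^{-1}(\pi(0))$. By Corollary~\ref{cor:fibrenilpotent}, the second condition is equivalent to $V_y$ being nilpotent. I will also record that $V_y$ is always $\eta$-semistable. Since $\eta\in\relint(W_\rho)\subset\overline{C}$, it is a limit of parameters $\theta\in C$. For every such $\theta$ and every nonzero proper submodule $V'\subset V_y$ we have $\theta(V')>0$, so passing to the limit gives $\eta(V')\geq 0$.

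For the short exact sequence, I would take $y\in Z_\rho$ and use strict semistability. This gives a nonzero proper submodule $V'\subset V_y$ with $\eta(V')=0$. Among all such submodules, choose a minimal one $V'$, which is $\eta$-stable.

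I then want to show $V'\cong S_\rho$, using the fact that $\eta$ lies on the wall. Pick $\theta\in C$ close to $\eta$. We have $\theta(V')>0$, while $\eta(V')=\sum_\sigma (\dim V'_\sigma)\eta_\sigma=0$. Here $\eta_\sigma>0$ for $\sigma\notin\{\rho_0,\rho\}$, $\eta_\rho=0$, and $\eta_{\rho_0}=-\sum_{\sigma\neq\rho_0}(\dim\sigma)\eta_\sigma<0$. It is cleaner to argue on submodules directly. Recall that $V_y$ is generated as an $A$-module by its $\rho_0$-component, which is one-dimensional, since it is a $G$-cluster and so a quotient of $Ae$. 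Hence any proper submodule has $\dim V'_{\rho_0}=0$. Then $\eta(V')=\sum_{\sigma\neq\rho_0,\rho}(\dim V'_\sigma)\eta_\sigma=0$ forces $V'_\sigma=0$ for all $\sigma\neq\rho$. So $V'=V'_\rho\otimes\rho$ is concentrated at the vertex $\rho$. The arrows of the McKay quiver go between vertices, and a loop at $\rho$ would correspond to a summand $\rho\subset\rho\otimes\rho_{\text{given}}$. The cleaner route is nilpotency: $V'$ is a nilpotent module supported at a single vertex, so it has a simple submodule, which must be $S_\rho$. By minimality, $V'=S_\rho$.

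The step needing care is exactly this identification of the destabilising submodule. The key inputs are cyclicity from the $\rho_0$-vertex and nilpotency of $V_y$; note that nilpotency is where the intersection with $\tau^{-1}(\pi(0))$ is used. It remains to set $Q_y:=V_y/S_\rho$. Since $\eta(S_\rho)=\eta_\rho=0$ and $\eta(V_y)=0$, semistability of $Q_y$ follows by the standard argument. Any submodule $Q'\subset Q_y$ pulls back to a submodule $\widetilde{Q}\subset V_y$ containing $S_\rho$, and $\eta(Q')=\eta(\widetilde{Q})-\eta(S_\rho)=\eta(\widetilde{Q})\geq 0$. Finally, $S_\rho$ is $\eta$-semistable because it is simple with $\eta(S_\rho)=0$. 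This gives \eqref{eqn:sesV_y} as a sequence of $\eta$-semistable modules.
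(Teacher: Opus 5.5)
Your proof is correct and follows essentially the same route as the paper. You use Corollary~\ref{cor:fibrenilpotent} for the first sentence, then show that the $\eta$-destabilising submodule of the nilpotent module $V_y$ contains $S_\rho$, and deduce semistability of $Q_y=V_y/S_\rho$ from $\eta(S_\rho)=0$. Your argument that a proper submodule has zero $\rho_0$-component, because $V_y$ is a quotient of $Ae$, and is therefore concentrated at the vertex $\rho$ since $\eta_\sigma>0$ for $\sigma\notin\{\rho_0,\rho\}$, usefully spells out a step the paper compresses into ``It follows that''.
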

 
 \begin{proof}
 
 The first sentence is immediate from Corollary~\ref{cor:fibrenilpotent}. 
 For the second, it is enough by the Morita equivalence between $A$ and $A^\flat$ to prove the statement in the category of $A^\flat$-modules. 
 The $A^\flat$-module
$V_y$ is strictly $\eta$-semistable, so there is a destabilising submodule $S_y\subset V_y$ with $\eta(S_y)=0$, and $V_y$ is nilpotent, so the submodule $S_y$ is nilpotent. It follows that $S_y$ admits a Jordan--Holder filtration whose composition factors are copies of the simple $A^\flat$-module $S_\rho$. In particular,  $S_\rho$ is a submodule of $S_y$ and hence of $V_y$. We have that $\eta(S_\rho)=0$ and $V_z$ is $\eta$-semistable as $z\in Z_\rho$, so $Q_y:= V_y/S_\rho$ is also $\eta$-semistable.
 \end{proof}

\begin{lemma}
\label{lem:h}
 Let $y, z\in Z_\rho$. If $\Hom(V_y,Q_z)\neq 0$, then there is a short exact sequence of $A$-modules
    \begin{equation}
 \label{eqn:sesV_yQ_z}
 0\longrightarrow S_\rho
 \longrightarrow V_y\longrightarrow Q_z\longrightarrow 0.
 \end{equation}
\end{lemma}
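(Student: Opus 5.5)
Take a nonzero homomorphism $h\colon V_y\to Q_z$. Both modules are $\eta$-semistable with $\eta$-value zero: $V_y$ because $y\in Z_\rho$, and $Q_z$ by Lemma~\ref{lem:Homnonzero}, since $\eta(Q_z)=\eta(V_z)-\eta(S_\rho)=0$. The abelian category of $\eta$-semistable modules of slope zero is closed under kernels, images and cokernels, so $\operatorname{im}(h)$ and $\ker(h)$ are again $\eta$-semistable with $\eta=0$. The plan is to show, by a dimension-vector computation combined with $\theta$-stability of $V_y$ for $\theta\in C$, that $h$ must be surjective and that $\ker(h)\cong S_\rho$.

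First I show that $h$ is surjective. Since $V_y$ is generated by its $\rho_0$-component, the image $I:=\operatorname{im}(h)$ is generated by $h(V_{y,\rho_0})$. This space is nonzero: otherwise $h$ would vanish on a generator, giving $h=0$. So $\dim I_{\rho_0}=1=\dim (Q_z)_{\rho_0}$, using that $\rho\neq\rho_0$ and hence $Q_z$ has dimension vector $v-[S_\rho]$, whose $\rho_0$-entry is $1$. Now consider the cokernel $Q_z/I$. It is $\eta$-semistable with $\eta(Q_z/I)=0$, and its dimension vector has zero $\rho_0$-entry. Choose $\theta\in C$ close to $\eta$, so $\theta_\sigma>0$ for all $\sigma\neq\rho_0$. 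Then $\theta(Q_z/I)\ge 0$ with equality only if $Q_z/I=0$. On the other hand, pulling back along $V_z\to Q_z$ exhibits $Q_z/I$ as a quotient of $V_z$ by a nonzero submodule, and $\theta$-stability of $V_z$ forces $\theta(Q_z/I)<0$ unless $Q_z/I=0$. Hence $h$ is surjective.

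Next I identify the kernel. We have $K:=\ker(h)$ with $[K]=[V_y]-[Q_z]=[S_\rho]$, so $K$ is a one-dimensional-vector module concentrated at vertex $\rho$. Its dimension vector is that of $S_\rho$, namely $\dim\rho$ copies of $\rho$ in the $G$-decomposition. As in the proof of Lemma~\ref{lem:Homnonzero}, any module whose dimension vector is $[S_\rho]$ admits a Jordan--H\"older filtration whose factors are simple modules with dimension vectors summing to $[S_\rho]$. A simple nilpotent $A^\flat$-module supported at a single vertex is $S_\rho$. Since $K\subset V_y$ is nilpotent, it follows that $K\cong S_\rho$. This yields the sequence \eqref{eqn:sesV_yQ_z}.

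The main obstacle is the surjectivity step, and specifically making the $\theta$-stability argument rigorous. The subtle point is relating the cokernel of $h$ to a quotient of $V_z$: one needs the preimage of $I$ in $V_z$ to be a proper submodule containing the generator. If that route is awkward, the fallback is to argue directly. The cokernel is a quotient of $Q_z$ killing the $\rho_0$-part, so it is a quotient of $V_z$ through which the generator maps to zero. Such a quotient must therefore be zero, because $V_z$ is generated by its $\rho_0$-part. This is actually simpler and sidesteps stability entirely.
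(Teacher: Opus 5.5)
Your proposal is correct and follows essentially the paper's proof. Surjectivity of $h$ comes from $h$ being nonzero on the $\rho_0$-generator while $(Q_z)_{\rho_0}$ is one-dimensional and generates $Q_z$. Your fallback is exactly the paper's argument, and your $\theta$-stability route via the preimage of $\operatorname{im}(h)$ in $V_z$ (which contains $S_\rho$, hence is nonzero) is also valid, just unnecessary. The kernel step, $[\ker h]=[S_\rho]$ together with nilpotency forcing $\ker h\cong S_\rho$, matches the paper's; the paper phrases it as loops at vertex $\rho$ acting by nilpotent scalars on a one-dimensional space, which is equivalent to your Jordan--H\"older argument.
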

\begin{proof}
 Let $h\colon V_y\to Q_z$ be nonzero. There exists a surjective map of $A$-modules $p\colon Ae\to V_y$, and we write $v_{\rho_0}:= p(1\otimes e) \in V_y$.  Let $v\in V_y$ satisfy $h(v)\neq 0$, and write $v=av_{\rho_0}$ for some $a\in A$. Then $h(v)= ah(v_{\rho_0})$,  so $h(v_{\rho_0})\neq 0$.  But $ \dim (Q_z)_{\rho_0}=1$ and $Q_z$ is a cyclic $A$-module with generator in $(Q_z)_{\rho_0}$, so $h$ is surjective. The dimension vectors of $V_y$ and $V_z$ agree, so $N:=\ker(h)$ satisfies 
 \[
 [N]=[V_y]-[Q_z] = [V_z]-[Q_z]=[S_\rho]\in \Knum(A)
 \]
 by the sequence \eqref{eqn:sesV_y} associated to the point $z$. It remains to show that $N\cong S_\rho$. 
 
It is enough by the Morita equivalence between $A$ and $A^\flat$ to prove $N\cong S_\rho$ 
 in the category of $A^\flat$-modules.
 For this, $V_y$ is
nilpotent by Lemma~\ref{lem:Homnonzero}, hence so is the submodule $N$. Since $\dim N_\rho=1$, any loop in the McKay quiver at vertex $\rho$ acts on $N$ as a scalar, and this action must be zero as $N$ is nilpotent.
\end{proof}

 The support of the object $\Psi(S_\rho)$ is the union of the supports of the cohomology sheaves of $\Psi(S_\rho)$.  Since only finitely many of these sheaves are nonzero, it follows that $\supp \Psi(S_\rho)$ is closed. The proof of the next result is adapted from \cite[Section~11]{IU16}.

 \begin{proposition}
 \label{prop:support}
 The object $\Psi(S_\rho)$ has proper support, and in fact $\supp \Psi(S_\rho) = Z_\rho$.
 \end{proposition}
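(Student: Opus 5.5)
We will identify the support pointwise using the standard criterion: a point $y\in Y$ lies in $\supp \Psi(S_\rho)$ if and only if $\Hom^i_{D(Y)}(\Psi(S_\rho),\mathcal{O}_y)\neq 0$ for some $i$. Since $\Psi$ is an equivalence, this is equivalent to $\Ext^i_A(S_\rho,V_y)\neq 0$ for some $i$. Properness then follows from Corollary~\ref{cor:PsiSrhofibre}, because $\tau^{-1}(\pi(0))$ is proper and the support is closed. So the real content is the equality $\supp\Psi(S_\rho)=Z_\rho$. Corollary~\ref{cor:PsiSrhofibre} gives $\supp\Psi(S_\rho)\subseteq\tau^{-1}(\pi(0))$, so we may restrict to points $y$ for which $V_y$ is nilpotent, by Corollary~\ref{cor:fibrenilpotent}.

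\emph{Vanishing in each degree.} For $y\in\tau^{-1}(\pi(0))$ we compute the groups degree by degree.
\begin{itemize}
\item $i=3$: $\Ext^3_A(S_\rho,V_y)\cong\Hom_A(V_y,S_\rho)^*=0$ by Lemma~\ref{lem:HomVySrho}.
\item $i=0$: $\Hom_A(S_\rho,V_y)\neq 0$ exactly when $S_\rho\subset V_y$. Since $\eta(S_\rho)=0$, this forces $V_y$ to be strictly $\eta$-semistable. Conversely, Lemma~\ref{lem:Homnonzero} shows that this Hom is nonzero whenever $y\in Z_\rho$.
\item $i=2$: by CY3 duality, $\Ext^2_A(S_\rho,V_y)\cong\Ext^1_A(V_y,S_\rho)^*$.
\item Euler characteristic: $\chi_A(S_\rho,V_y)$ depends only on classes in $\Knum(A)$, and equals $\chi_A(S_\rho,\mathcal{O}_0\otimes\CC[G])$. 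For the McKay quiver this is $\dim\rho-\dim\Hom(\rho,\rho_{\text{given}}\otimes\CC[G])+\dim\Hom(\rho,\wedge^2\rho_{\text{given}}\otimes\CC[G])-\dim\rho=0$, since $\rho_{\text{given}}\cong\wedge^2\rho_{\text{given}}^*$ and the multiplicities match.
\end{itemize}
Hence if $\Hom$ and $\Ext^3$ vanish, then $\dim\Ext^1=\dim\Ext^2$. It therefore suffices to show that $\Ext^2_A(S_\rho,V_y)=\Ext^1_A(V_y,S_\rho)^*$ vanishes for $y\notin Z_\rho$.

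\emph{The key step.} Suppose $y\in\tau^{-1}(\pi(0))\setminus Z_\rho$ and that there is a nonsplit extension $0\to S_\rho\to E\to V_y\to 0$. Then $V_y$ is $\eta$-stable, and $E$ is a nilpotent module of dimension vector $v+[S_\rho]$. Following \cite[Section~11]{IU16}, I would argue via a deformation or limit. Take $\theta\in C$ close to $\eta$ with $\theta_\rho>0$ small. The extension is nonsplit, so $S_\rho$ is the socle part at $\rho$ in the appropriate sense. Consider the submodule $E'\subseteq E$ generated by a lift of the generator $v_{\rho_0}\in (V_y)_{\rho_0}$. If $E'\neq E$, then $E'\cong V_y$ and the extension splits. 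So $E=E'$ is cyclic, generated at $\rho_0$. I would then produce a quotient of $E$ of dimension vector $v$ that contains the image of $S_\rho$ and maps onto $V_y$ minus something. This should yield a point $z\in Z_\rho$ together with a nonzero map $V_y\to Q_z$ or $V_z\to V_y$, and Lemma~\ref{lem:h} then forces $V_y$ to fit in \eqref{eqn:sesV_yQ_z}, contradicting $\eta$-stability of $V_y$.

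Concretely, the construction of this quotient goes as follows. Since $E$ is cyclic with $\dim E_\rho=\dim V_\rho+1$ and $\dim E_{\rho_0}=1$, an $\eta$-semistable analysis shows that $E$ has a quotient $V'$ with $[V']=v$ by killing a copy of some $S_\sigma$ in the socle. Any such $V'$ is cyclic, hence is a $G$-cluster $V_z$. If $\sigma\neq\rho$, then $S_\rho\subset V_z$ gives $z\in Z_\rho$, and the composite $V_z\to E\to V_y$ (or the induced $V_y\to Q_z$) gives the contradiction via Lemma~\ref{lem:h}. This step, namely finding a socle simple $S_\sigma$ of $E$ whose quotient is $\theta$-stable, is the main obstacle. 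I would first try to handle it by using that $\Hom_A(E,S_\sigma)=0$ for $\sigma\neq\rho_0$ in the appropriate sense, together with the fact that $\Ext^1_A(S_\sigma,S_\rho)$ corresponds to arrows of the McKay quiver, carefully tracking $\theta$-stability for $\theta\in C$.
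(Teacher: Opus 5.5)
Your reductions are sound. These include properness, the restriction to $y\in\tau^{-1}(\pi(0))$, the vanishing in degrees $0$ and $3$, and the duality $\Ext^2_A(S_\rho,V_y)\cong\Ext^1_A(V_y,S_\rho)^*$. Your Koszul computation $\chi_A(S_\rho,V_y)=\dim\rho\,(1-3+3-1)=0$ is a valid substitute for the paper's trick of moving $y$ to a point outside the support. So everything hinges on showing $\Ext^1_A(V_y,S_\rho)=0$ for $y\in\tau^{-1}(\pi(0))\setminus Z_\rho$.

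There your argument has a genuine gap, and the proposed construction cannot succeed. Take a nonsplit extension $0\to S_\rho\to E\to V_y\to 0$. Any quotient $V'$ of $E$ with $[V']=v$ has kernel $N$ of class $[S_\rho]$. Since $N$ is nilpotent, $N\cong S_\rho$, as in the proof of Lemma~\ref{lem:h}. In particular, your case $\sigma\neq\rho$ is vacuous, because the dimension vectors do not match. Moreover, $\Hom_A(S_\rho,V_y)=0$ because $V_y$ is $\eta$-stable, so $\Hom_A(S_\rho,E)\cong\Hom_A(S_\rho,S_\rho)\cong\CC$. Hence the given copy of $S_\rho$ is the only one in $E$, the only such quotient is $V_y$ itself, and no new point $z\in Z_\rho$ ever appears. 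Likewise, a nonzero map $V_z\to V_y$ is impossible for $z\neq y$, since $\Hom_A(V_z,V_y)\cong\Hom(\mathcal{O}_z,\mathcal{O}_y)=0$. A purely local analysis of $E$ cannot see the other points of $Y$.

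The missing ingredient is global: the vanishing $\Ext^1_A(V_y,V_z)\cong\Ext^1_{\mathcal{O}_Y}(\mathcal{O}_y,\mathcal{O}_z)=0$ for $y\neq z$, which comes from the BKR equivalence. Fix $z\in Z_\rho$ with its destabilising sequence $0\to S_\rho\to V_z\to Q_z\to 0$; note $z\neq y$ because $y\notin Z_\rho$. Applying $\Hom_A(V_y,-)$ gives an exact sequence $\Hom_A(V_y,Q_z)\to\Ext^1_A(V_y,S_\rho)\to\Ext^1_A(V_y,V_z)=0$. If $\Hom_A(V_y,Q_z)\neq 0$, then Lemma~\ref{lem:h} produces a submodule $S_\rho\subset V_y$, contradicting $\eta$-stability of $V_y$. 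Therefore $\Hom_A(V_y,Q_z)=0$, and so $\Ext^1_A(V_y,S_\rho)=0$.

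In your extension language: pushing your extension out along $S_\rho\hookrightarrow V_z$ gives an extension of $V_y$ by $V_z$, which splits. So your class lies in the image of the connecting map from $\Hom_A(V_y,Q_z)=0$, and hence vanishes. This is how the paper, following Ishii--Ueda, closes the argument.
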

 \begin{proof}
The support of $\Psi(S_\rho)$ is a closed subset of the proper fibre $\tau^{-1}(\pi(0))$ by Corollary~\ref{cor:PsiSrhofibre}, so it is proper. To determine the support explicitly, 
 let $y\in Z_\rho$. Lemma~\ref{lem:Homnonzero} implies that 
\[
\Hom_{D(Y)}(\Psi(S_\rho),\mathcal{O}_y) \cong \Hom_A(S_\rho,V_y)\neq 0,
\]
 so $y\in \supp \Psi(S_\rho)$. For the opposite inclusion, suppose $y\not\in Z_\rho$. If $y\not\in \tau^{-1}(\pi(0))$, then taking complements in the inclusion $
 \supp(\Psi(S_\rho))\subseteq \tau^{-1}(\pi(0))$ gives $y\not\in  \supp\Psi(S_\rho)$ as required. Otherwise,  $y$ lies in $\tau^{-1}(\pi(0))$ and $V_y$ is not strictly $\eta$-semistable. To show that $y\not \in \supp \Psi(S_\rho)$, we require
 \[
 0=\Hom_{D(Y)}^k(\Psi(S_\rho),\mathcal{O}_y) \cong \Ext^k_A(S_\rho,V_y)
 \] 
 for all $k\in \ZZ$.  We already know this for $k\not\in \{0,1,2\}$ by Lemma~\ref{lem:Hom3}. For $k=0$, if $\Hom_A(S_\rho,V_y)\neq 0$, then $V_y$ would be strictly $\eta$-semistable which is absurd.  For $k=2$, choose any point $z\in Z_\rho$. Now Lemma~\ref{lem:Homnonzero} gives a short exact sequence  
 \begin{equation}
 \label{eqn:sesV_z}
 0\longrightarrow S_\rho
 \longrightarrow V_z\longrightarrow Q_z\longrightarrow 0.
 \end{equation}
 If $\Hom_A(V_y,Q_z) \neq 0$, then Lemma~\ref{lem:h} determines a submodule $N\cong S_\rho$ of $V_y$ satisfying $\eta(N)>0$, so $N$ destabilises $V_y$, a contradiction. Therefore $\Hom_A(V_y,Q_z)=0$. Since $y\neq z$, we have 
 \begin{equation}
 \label{eqn:ExtBKR}
 \Ext^1_A(V_y,V_z) \cong \Ext^1_{\mathcal{O}_Y}(\mathcal{O}_y,\mathcal{O}_z) =0,
 \end{equation}
 so the long exact sequence associated to $\Hom_A(V_y,-)$ implies that $\Ext^1_A(V_y,S_\rho)=0$. Serre duality gives $\Ext^2_A(S_\rho,V_y)\cong \Ext^1_A(V_y,S_\rho)^*=0$.  Since $\Ext_A^k(S_\rho,V_y)=0$ for all $k\neq 1$, we have 
 \[
 \dim\Ext_A^1(S_\rho,V_y) = -\chi_A(S_\rho,V_y) = -\chi_{\mathcal{O}_Y}\big(\Psi(S_\rho),\mathcal{O}_y\big).
 \]
 Since $Y$ is connected, the value of the Euler characteristic is unchanged if we replace $y$ by any other point of $Y$, and in particular, by any point in the complement of $\supp \Psi(S_\rho)$, for which this Euler characteristic equals zero. Therefore $\Ext^1_A(S_\rho,V_y)=0$, so $y\not\in \supp\Psi(S_\rho)$ after all.  \end{proof}

 Proposition~\ref{prop:support} and Lemma~\ref{lem:Hom3} together imply that each irreducible component of $Z_\rho$ has dimension at least 1. For any point $z\in Z_\rho$, Lemma~\ref{lem:Homnonzero} allows us to construct a destabilising sequence
  \begin{equation}
 \label{eqn:sesV_zdestab}
 0\longrightarrow S_\rho
 \longrightarrow V_z\longrightarrow Q_z\longrightarrow 0
 \end{equation}
of $\eta$-semistable $A$-modules. 
 
 \begin{corollary}
 \label{cor:Lin}
 If points $y, z\in Z_\rho$ satisfy $\Hom_A(V_y,Q_z) \neq 0$, then the $\eta$-semistable $A$-modules $V_y$ and $V_z$ are $\Seshadri$-equivalent.
 \end{corollary}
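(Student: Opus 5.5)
The plan is to apply Lemma~\ref{lem:h} to the nonzero homomorphism $V_y\to Q_z$. It gives a short exact sequence
\[
0\longrightarrow S_\rho\longrightarrow V_y\longrightarrow Q_z\longrightarrow 0.
\]
Then I compare Jordan--H\"older filtrations in the abelian category of $\eta$-semistable $A$-modules $M$ with $\eta(M)=0$. Recall that two $\eta$-semistable modules are $\Seshadri$-equivalent exactly when their associated graded objects (with respect to $\eta$-Jordan--H\"older filtrations) are isomorphic. So the goal is to show $\operatorname{gr}(V_y)\cong S_\rho\oplus \operatorname{gr}(Q_z)\cong \operatorname{gr}(V_z)$.

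The steps, in order, are as follows.

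First, observe that $S_\rho$ is $\eta$-stable. It is simple as an $A$-module and satisfies $\eta(S_\rho)=\eta_\rho=0$, so it has no nonzero proper submodules at all.

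Second, record that $Q_z$ is $\eta$-semistable with $\eta(Q_z)=\eta(V_z)-\eta(S_\rho)=0$. This is part of Lemma~\ref{lem:Homnonzero}.

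Third, use the sequence from Lemma~\ref{lem:h}. Since $y\in Z_\rho$, the module $V_y$ is $\eta$-semistable. Its submodule $S_\rho$ has $\eta=0$, and the quotient is $Q_z$. Take any $\eta$-Jordan--H\"older filtration $0=F_0\subset F_1\subset\cdots\subset F_m=Q_z$ of $Q_z$ and pull it back along $V_y\to Q_z$. Prepending $0\subset S_\rho$ gives a filtration of $V_y$ whose factors are $S_\rho$ followed by the factors $F_i/F_{i-1}$. All of these factors are $\eta$-stable of $\eta$-weight zero, so this is an $\eta$-Jordan--H\"older filtration of $V_y$, and hence $\operatorname{gr}(V_y)\cong S_\rho\oplus\operatorname{gr}(Q_z)$.

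Fourth, apply the same construction to the sequence \eqref{eqn:sesV_zdestab} for $V_z$. This gives $\operatorname{gr}(V_z)\cong S_\rho\oplus\operatorname{gr}(Q_z)$.

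Finally, Jordan--H\"older factors are unique up to isomorphism and permutation in the finite-length abelian category of $\eta$-semistable modules of weight zero (King~\cite{King94}). Therefore the two graded objects agree, and $V_y$ and $V_z$ are $\Seshadri$-equivalent.

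There is essentially no obstacle, since the real content is already in Lemma~\ref{lem:h}. The only point needing care is the third step: one must check that the pulled-back filtration of $V_y$ really consists of $\eta$-semistable submodules of weight zero with stable subquotients. This is immediate because extensions of weight-zero semistable modules are semistable and $S_\rho$ is stable.
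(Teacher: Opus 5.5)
Your proof is correct and takes the same route as the paper. The paper's proof simply says the result follows by comparing the sequence $0\to S_\rho\to V_y\to Q_z\to 0$ from Lemma~\ref{lem:h} with the destabilising sequence \eqref{eqn:sesV_zdestab} for $V_z$, and your Jordan--H\"older argument is the detailed version of that comparison.
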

 \begin{proof}
 The result follows by comparing the short exact sequences  \eqref{eqn:sesV_yQ_z} and \eqref{eqn:sesV_zdestab}.
 \end{proof}

\section{Contracting nonsingular rational curves}
  We now study morphisms from $Y$ induced by variation of GIT quotient, where we vary a stability parameter from the GIT chamber $C$ into a wall of the form $W_\rho$.

 As before, any stability parameter $\eta\in \relint(W_\rho)$ satisfies $\eta_\rho=0$ and $\eta_\sigma> 0$ for each $\sigma\not\in \{\rho_0,\rho\}$. The coarse moduli space $\mathcal{M}_\eta(A,v)$ need not be irreducible in general, but if we let $Y_\eta$ denote the normalisation of the image of the morphism $Y\to \mathcal{M}_\eta(A,v)$ induced by variation of GIT quotient from $\theta\leadsto \eta$, then Zariski's Main Theorem shows that the induced morphism
 \begin{equation}
     \label{eqn:contWrho}
\cont_{W_\rho}\colon Y\longrightarrow Y_{\eta}
 \end{equation}
 has connected fibres. In particular, $\cont_{W_\rho}$ is either an isomorphism or it contracts a curve class. 
 Distinct points $y, z\in Y$ are mapped to the same point under the VGIT morphism $Y\to \mathcal{M}_\eta(A,v)$ if and only if the $G$-clusters $V_y, V_z$ are $\Seshadri$-equivalent in the category of $\eta$-semistable $A$-modules.
 
 Choosing $\eta$ so that $\eta_\sigma\in \ZZ$ for all $\sigma\in \Irr(G)$ ensures that
 \begin{equation}
 \label{eqn:LC+eta}
 L_{C}(\eta) = \bigotimes_{\sigma\neq \rho_0} \det \big(\mathcal{R}_\sigma\big)^{\otimes \eta_\sigma}
 \end{equation}
is a line bundle on $Y$. The choice of chamber $C$ implies that each locally-free sheaf $\mathcal{R}_\sigma$ is globally generated \cite[Proposition~2.4]{CIK18}, 
so the line bundle $L_{C}(\eta)$ is also globally generated. Since $L_C(\eta)$ is obtained by descent from an equivariant line bundle that also determines the polarising ample line bundle on the GIT quotient $\mathcal{M}_\eta(A,v)$ (see, for example, \cite[Lemma~3.3]{CI04}), we have that 
\[
L_{C}(\eta) = {\cont_{W_\rho}}^*(\mathcal{O}_{Y_\eta}(1)).
\]
In particular, the morphism $\cont_{W_\rho}$ is determined by $L_C(\eta)$, in the sense that $\cont_{W_\rho}$ contracts a curve $\ell\subseteq Y$ if and only if $\deg L_C(\eta)\vert_\ell = 0$.

 Recall from \eqref{eqn:Zrho} that $Z_\rho\subseteq Y$ is the intersection of the $\eta$-strictly semistable locus with $\tau^{-1}(\pi(0))$.
\begin{lemma}
\label{lem:deg0or1}
 Suppose that the locus $Z_\rho$ contains a curve $\ell\cong \mathbb{P}^1$ that is contracted by the morphism $\cont_{W_\rho}$. Then for all $\sigma\in \Irr(G)$, the restriction of the tautological bundle $\mathcal{R}_\sigma$ to $\ell$ satisfies 
\[
\mathcal{R}_\sigma\vert_{\ell}\cong \left\{\begin{array}{cr}\mathcal{O}_{\mathbb{P}^1}^{\oplus \dim(\sigma)} & \text{ for }\sigma\neq \rho, \\ 
\mathcal{O}_{\mathbb{P}^1}(1)\oplus \mathcal{O}_{\mathbb{P}^1}^{\oplus \dim(\rho)-1} & \text{ for } \sigma=\rho.
\end{array}\right.
\]
\end{lemma}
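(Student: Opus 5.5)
The plan is to compute the degrees of the restrictions $\mathcal{R}_\sigma\vert_\ell$ and then use global generation to pin down the splitting types. By \cite[Corollary~2.4]{CIK18} each $\mathcal{R}_\sigma$ is globally generated. So $\mathcal{R}_\sigma\vert_\ell\cong\bigoplus_i \mathcal{O}_{\mathbb{P}^1}(a_i)$ with all $a_i\geq 0$, and it is trivial precisely when $\deg \mathcal{R}_\sigma\vert_\ell=0$. Since $\ell$ is contracted by $\cont_{W_\rho}$, we have $\deg L_C(\eta)\vert_\ell=0$. By \eqref{eqn:LC+eta} this reads
\[
0=\sum_{\sigma\neq\rho_0}\eta_\sigma \deg \det(\mathcal{R}_\sigma)\vert_\ell ,
\]
where $\eta_\rho=0$, $\eta_\sigma>0$ for $\sigma\notin\{\rho_0,\rho\}$, and every degree is $\geq 0$. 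Hence $\deg \mathcal{R}_\sigma\vert_\ell=0$, and so $\mathcal{R}_\sigma\vert_\ell$ is trivial, for all $\sigma\neq\rho$; for $\sigma=\rho_0$ this holds anyway since $\mathcal{R}_{\rho_0}\cong\mathcal{O}_Y$. It remains to show that $\mathcal{R}_\rho\vert_\ell\cong\mathcal{O}(1)\oplus\mathcal{O}^{\oplus\dim\rho-1}$. Because $\mathcal{R}_\rho\vert_\ell$ is globally generated, it suffices to prove $\deg\mathcal{R}_\rho\vert_\ell=1$.

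Next I identify a line subbundle of $\mathcal{R}_\rho\vert_\ell$ carrying all of the degree. Work in the $A^\flat$-picture, so that the tautological arrows give bundle maps between the $\mathcal{R}_\sigma\vert_\ell$. For $y\in\ell\subseteq Z_\rho$, Lemma~\ref{lem:Homnonzero} gives $S_\rho\subset V_y$. The socle copy of $S_\rho$ corresponds to the common kernel $M_y\subset(\mathcal{R}_\rho)_y$ of all outgoing arrow maps $\mathcal{R}_\rho\vert_\ell\to\mathcal{R}_\sigma\vert_\ell$. This kernel is exactly one-dimensional: two independent vectors would give $S_\rho^{\oplus 2}\subset V_y$, and then the injectivity argument of Lemma~\ref{lem:h}, together with stability, gives a contradiction. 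So $M\subset\mathcal{R}_\rho\vert_\ell$ is a line subbundle, and $\mathcal{R}_\rho\vert_\ell/M$ embeds as a subbundle of a direct sum of trivial bundles, namely copies of $\mathcal{R}_\sigma\vert_\ell$ (and loops at $\rho$ act through it as well). Such a subbundle has degree $\leq 0$. On the other hand, the quotient $\mathcal{R}_\rho\vert_\ell/M$ is a quotient of a globally generated bundle, so it has degree $\geq 0$. Therefore $\mathcal{R}_\rho\vert_\ell/M$ is trivial and $\deg\mathcal{R}_\rho\vert_\ell=\deg M$.

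The consequence is that, along $\ell$, all the quotient modules $Q_y$ are represented by constant matrices with respect to trivialisations. So $Q_y\cong Q$ is a fixed module, and $\ell$ parametrises a family of nonsplit extensions $0\to S_\rho\to V_y\to Q\to 0$. Pairwise non-isomorphic $V_y$ give an injective morphism $\ell\to\mathbb{P}(\Ext^1_A(Q,S_\rho))$. On the universal extension over this projective space, normalised so that the $\rho_0$-part is trivial, the copy of $S_\rho$ is twisted by $\mathcal{O}(1)$. Hence $M$ is the pullback of $\mathcal{O}(1)$, and $\deg M$ equals the degree of the image curve, which is $\geq 1$.

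The main obstacle is showing that this degree is exactly $1$, i.e.\ that $\ell$ maps isomorphically onto a line. My first attempt is as follows. The $\theta$-stable extensions form an open subset $U\subseteq\mathbb{P}(\Ext^1_A(Q,S_\rho))$. By Corollary~\ref{cor:Lin}, $U$ is identified with the fibre of $\cont_{W_\rho}$ through $\ell$, which is proper, so $U$ is the whole projective space. I would then bound $\dim\Ext^1_A(Q,S_\rho)\leq 2$ so that this fibre is $\mathbb{P}^1$ itself, making $\ell\to\mathbb{P}^1$ an isomorphism of degree $1$. For the bound I would compute $\chi_A(Q,S_\rho)$ via $[Q]=v-[S_\rho]$, using $\chi_A(V_y,S_\rho)=0$ (from \eqref{eqn:Hom3} and Serre duality) and $\Hom_A(Q,S_\rho)=0$. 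If the dimension turns out larger, the fallback is to show that $\ell$ is a line in this projective space by noting that its image is cut out linearly by the conditions defining $Z_\rho$.
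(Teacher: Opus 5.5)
\textbf{The gap.} Your opening step matches the paper: nonnegative splitting from global generation, then $\deg L_C(\eta)\vert_\ell=0$ with $\eta_\sigma>0$ for $\sigma\notin\{\rho_0,\rho\}$. This correctly reduces the lemma to showing $d:=\deg\mathcal{R}_\rho\vert_\ell=1$. But your proposal never establishes $d=1$.

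Your Euler-characteristic bound does not work. From $\chi_A(Q,S_\rho)=\chi_A(V_y,S_\rho)-\chi_A(S_\rho,S_\rho)=0$, $\Hom_A(Q,S_\rho)=0$ and Serre duality you only get $\dim\Ext^1_A(Q,S_\rho)=\dim\Ext^1_A(S_\rho,Q)-\dim\Hom_A(S_\rho,Q)$, which bounds nothing.

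More seriously, the geometry of the extension family over $\ell$ cannot by itself give $d=1$ in the generality of the lemma. If the fibre of $\cont_{W_\rho}$ through $\ell$ is a surface, then a priori $\mathbb{P}(\Ext^1_A(Q,S_\rho))$ could be a $\mathbb{P}^2$ and $\ell$ a conic in it, giving $\deg M=2$. That is exactly the situation where the lemma is needed: Proposition~\ref{prop:notypeII} uses it to exclude type $\II$ walls. Your fallback (``cut out linearly'') has no content there. For walls of type $\I$ or $\III$ you could bound $\dim\mathbb{P}(\Ext^1_A(Q,S_\rho))\leq 1$ by the fibre dimension, but that does not cover the lemma as stated.

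The intermediate step is also unsupported, in two places:
\begin{itemize}
\item Neither Lemma~\ref{lem:h} nor $\theta$-stability rules out $S_\rho^{\oplus 2}\subseteq V_y$, since $\theta_\rho>0$.
\item If the McKay quiver has loops at $\rho$ (as happens at every vertex for $G\subset\SL(2,\CC)\subset\SL(3,\CC)$), then $\mathcal{R}_\rho\vert_\ell/M$ maps into a sum containing $\mathcal{R}_\rho\vert_\ell$ itself. So you do not get degree $\leq 0$.
\end{itemize}

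\textbf{How the paper and a direct argument finish.} The paper uses a global input instead. Since the $\mathcal{R}_\sigma$ form a $\ZZ$-basis of $K(Y)$, the $\det(\mathcal{R}_\sigma)$ generate $\Pic(Y)$ over $\ZZ$. Only $\det(\mathcal{R}_\rho)$ has nonzero degree $d$ on $\ell$, and $d>1$ is then excluded by appeal to perfectness of the intersection pairing. (As written, that step tacitly needs the class of $\ell$ to be indivisible.)

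A self-contained way to finish bypasses your extension analysis entirely. It reruns the computation from the proof of Proposition~\ref{prop:contractP1} with $d$ still unknown.
\begin{itemize}
\item Write $\mathcal{R}_\rho\vert_\ell\cong\bigoplus_k\mathcal{O}_{\mathbb{P}^1}(a_k)$ with $a_k\geq 0$. By \cite[(2.2)]{CI04} and the triviality of $\mathcal{R}_\sigma\vert_\ell$ for $\sigma\neq\rho$, we have $h^0(\mathcal{O}_{\mathbb{P}^1}(a_k-1))=a_k$ and all $h^1$ vanish. So $\Phi(\mathcal{O}_\ell(-1))$ is a single module $N$ in degree $0$ whose underlying $G$-representation is $\CC^{d}\otimes\rho$.
\item $N$ is nilpotent because $\ell\subseteq\tau^{-1}(\pi(0))$. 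Also $\End_A(N)\cong\End(\mathcal{O}_\ell(-1))\cong\CC$.
\item Hence $d\geq 1$, since $N\neq 0$.
\item If $d\geq 2$, take a top and a bottom composition factor of $N$, both $S_\rho$. The composite $N\twoheadrightarrow S_\rho\hookrightarrow N$ is then a nonzero, non-invertible endomorphism, which is a contradiction.
\end{itemize}
So $d=1$, and as a by-product $N\cong S_\rho$.
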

\begin{proof}
 Since $v_\sigma:=\dim(\sigma) = \rank(\mathcal{R}_\sigma)$, Grothendieck's decomposition theorem 
 for locally-free sheaves on $\mathbb{P}^1$ 
 gives integers $d_{\sigma,1}\geq d_{\sigma,2}\geq \dots \geq d_{\sigma,v_\sigma}$ such that 
  \begin{equation}
 \label{eqn:Tirestricted}
 \mathcal{R}_\sigma\vert_{\ell}\cong \bigoplus_{1\leq k\leq \dim(\sigma)} \mathcal{O}_{\mathbb{P}^1}(d_{\sigma,k}). 
 \end{equation}
 Each sheaf $\mathcal{R}_\sigma$ is globally generated \cite[Proposition~2.4]{CIK18}, hence so is $\mathcal{R}_\sigma\vert_\ell$. It follows that $d_{\sigma,v_\sigma}\geq 0$, otherwise the composition of the surjective map $H^0(\mathcal{R}_\sigma\vert_\ell)\otimes {\mathcal{O}_\mathbb{P}^1}\to \mathcal{R}_\sigma\vert_\ell$ with the projection to $\mathcal{O}_{\mathbb{P}^1}(d_{\sigma,v_\sigma})$ would be surjective, which is absurd. The line bundle $L_{C}(\eta)$ has degree zero on $\ell$, and we compute by \eqref{eqn:LC+eta} that
 \[
 0 = \deg L_{C}(\eta)\vert_{\ell} = \sum_{\sigma\neq \rho_0} \eta_\sigma \deg \det (\mathcal{R}_\sigma)\vert_{\ell} = \sum_{\sigma\neq \rho_0} \eta_\sigma (d_{\sigma,1}+\dots +d_{\sigma,v_\sigma}).
 \]
 Since $\eta_\rho=0$, the sum runs only over those $\sigma\in \Irr(G)$ satisfying $\sigma\not\in \{\rho_0, \rho\}$. Moreover, we have $\eta_\sigma>0$ for $\sigma\not\in \{\rho_0,\rho\}$ and $d_{\sigma,k}\geq 0$ for $1\leq k\leq v_\sigma$, so $d_{\sigma,k}=0$ for all $\sigma\neq \rho$ and all $1\leq k\leq v_\sigma$. It follows that $\mathcal{R}_\sigma\vert_{\ell}\cong \mathcal{O}_{\mathbb{P}^1}^{\oplus v_\sigma}$ when $\sigma\neq \rho$. 
 
 It remains to consider the case $\sigma=\rho$. The previous paragraph implies that $\det(\mathcal{R}_\rho\vert_\ell)$ is globally generated, so $\det(\mathcal{R}_\rho\vert_{\ell}) \cong \mathcal{O}_{\mathbb{P}^1}(d)$ for $d:=d_{\rho,1}+\dots +d_{\rho,v_\rho}\geq 0$. Since $d_{\rho,1}\geq d_{\rho,2}\geq \dots \geq d_{\rho,v_\rho}\geq 0$, it suffices to show that $d=1$. For this, the locally-free sheaves $\mathcal{R}_\sigma$ for $\sigma\in \Irr(G)$ provide a $\ZZ$-basis for the Grothendieck group $K(Y)$ of coherent sheaves on $Y$, so the line bundles $\det(\mathcal{R}_\sigma)$ for $\sigma\neq \rho_0$ generate $\Pic(Y)$ over $\ZZ$. However, $\det(\mathcal{R}_\sigma\vert_{\ell})\cong \mathcal{O}_{\mathbb{P}^1}$ for $\sigma\neq \rho$ by the previous paragraph, so $\det(\mathcal{R}_\rho\vert_\ell)$ is the only one of the given generators of $\Pic(Y)$ that has nonzero-degree on the curve $\ell$. If $d>1$, then the intersection pairing on $Y$ would not be perfect, a contradiction.
 \end{proof}

 \begin{proposition}
 \label{prop:contractP1}
 Suppose that the locus $Z_\rho\subseteq Y$ contains a curve $\ell\cong \mathbb{P}^1$ that is contracted by the morphism $\cont_{W_\rho}$. Then $\ell=Z_\rho$ and
  \[
 \Psi(S_\rho) \cong \det(\mathcal{R}_\rho)^{-1}\vert_{Z_\rho}.
 \]
 \end{proposition}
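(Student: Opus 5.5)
Let $\ell\cong\mathbb{P}^1$ be the given curve. The plan is to prove $\Psi(S_\rho)\cong\det(\mathcal{R}_\rho)^{-1}\vert_\ell$ first, and then read off $\ell=Z_\rho$ from Proposition~\ref{prop:support}: once $\Psi(S_\rho)$ is known to be a sheaf supported on $\ell$, we get $Z_\rho=\supp\Psi(S_\rho)=\ell$. Lemma~\ref{lem:deg0or1} says $\det(\mathcal{R}_\rho)\vert_\ell\cong\mathcal{O}_{\mathbb{P}^1}(1)$, so the candidate object is $F:=\mathcal{O}_\ell(-1)$. Since $\Phi$ is an equivalence with quasi-inverse $\Psi$, it suffices to show $\Phi(\mathcal{O}_\ell(-1))\cong S_\rho$.

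To compute $\Phi(\mathcal{O}_\ell(-1))$, I would work fibrewise over the $G$-representations. For each $\sigma$, the $\sigma$-isotypic part of $\Phi(F)=\Rderived q_*p^*F$ is $\Rderived\Gamma(Y,F\otimes\mathcal{R}_\sigma^\vee)\otimes\sigma$, up to dualising conventions. In other words, the multiplicity space is $\Rderived\Gamma(\ell,\mathcal{R}_\sigma^{\vee}\vert_\ell(-1))$, or the version with $\mathcal{R}_\sigma$ depending on whether $\mathcal{R}$ or its dual appears in $p_*\mathcal{O}_{\mathcal{Z}}$. This convention has to be fixed carefully, and I expect that with the correct choice the twist $-1$ is exactly what is needed. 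We then apply Lemma~\ref{lem:deg0or1}:
\begin{itemize}
\item for $\sigma\neq\rho$, $\mathcal{R}_\sigma\vert_\ell$ is trivial, so the multiplicity space is a sum of copies of $\Rderived\Gamma(\mathcal{O}_{\mathbb{P}^1}(-1))=0$;
\item for $\sigma=\rho$, we get $\Rderived\Gamma\big(\mathcal{O}_{\mathbb{P}^1}(-1)\otimes(\mathcal{O}(\pm1)\oplus\mathcal{O}^{\oplus\dim\rho-1})\big)$. With the correct sign this is $\Rderived\Gamma(\mathcal{O}_{\mathbb{P}^1})=\CC$ in degree $0$, plus copies of $\Rderived\Gamma(\mathcal{O}(-1))=0$.
\end{itemize}
Hence $\Phi(F)$ is concentrated in degree $0$ and is a $G$-module isomorphic to $\rho$. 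It is supported at the origin by Theorem~\ref{thm:BKR}, since $\ell\subseteq\tau^{-1}(\pi(0))$, so it is a nilpotent $A$-module of dimension vector $[S_\rho]$. As in the end of the proof of Lemma~\ref{lem:h}, any loop at vertex $\rho$ acts by a nilpotent scalar, hence by zero, so $\Phi(F)\cong S_\rho$. Applying $\Psi$ gives $\Psi(S_\rho)\cong\mathcal{O}_\ell(-1)\cong\det(\mathcal{R}_\rho)^{-1}\vert_\ell$.

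The main obstacle is this sign/duality bookkeeping. We need the isotypic decomposition of $p_*\mathcal{O}_{\mathcal{Z}}$ to pair with $\mathcal{O}_\ell(-1)$ so that the $\rho$-component contributes $\mathcal{O}_{\mathbb{P}^1}$ rather than $\mathcal{O}(-2)$, which has $H^1=\CC$. I would check this by computing $\Hom_A(Ae_\sigma,\Phi(F))=\Hom_{D(Y)}(\Psi(Ae_\sigma),F)$, using $\Psi(Ae_\sigma)\cong\mathcal{R}_\sigma^\vee$ or $\mathcal{R}_\sigma$ according to the BKR conventions. If it turns out that $\mathcal{O}_\ell(-1)\otimes\mathcal{R}_\rho^\vee$ appears, I would instead use Serre duality on $Y$ (where $\omega_Y$ is trivial), or replace $F$ by the dual twist. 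The statement forces the answer to be $\mathcal{O}_\ell(-1)$, consistent with the $\SL(2)$ case of \cite{KV00}.
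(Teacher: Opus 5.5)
Your strategy is the paper's: compute $\Phi(\det(\mathcal{R}_\rho)^{-1}\vert_\ell)$ one isotypic component at a time using Lemma~\ref{lem:deg0or1}, conclude that it is $S_\rho$ in degree zero, apply $\Psi$, and then deduce $\ell=Z_\rho$ from Proposition~\ref{prop:support}. Your remark that a nilpotent module of dimension vector $[S_\rho]$ must be $S_\rho$ is a detail the paper leaves implicit. The gap is that you never settle the one step on which everything hinges, namely whether $\mathcal{R}_\sigma$ or $\mathcal{R}_\sigma^\vee$ appears in the isotypic decomposition. Moreover, the formula you write first is the wrong one.

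This is not a convention you are free to choose. The paper sets $\mathcal{R}:=p_*\mathcal{O}_{\mathcal{Z}}\cong\bigoplus_\sigma\mathcal{R}_\sigma\otimes\sigma$ and $\Phi=\Rderived q_*p^*$. Since $p$ is flat and finite, the projection formula gives
\[
\Rderived\Gamma\big(\CC^3,\Phi(F)\big)\cong\Rderived\Gamma(\mathcal{Z},p^*F)\cong\Rderived\Gamma\big(Y,F\otimes p_*\mathcal{O}_{\mathcal{Z}}\big)\cong\bigoplus_{\sigma}\Rderived\Gamma(Y,F\otimes\mathcal{R}_\sigma)\otimes\sigma,
\]
with no dual. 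This is \cite[(2.2)]{CI04}, which the paper cites. With your $\mathcal{R}_\sigma^\vee$ formula, the $\rho$-component would be $\Rderived\Gamma\big(\mathcal{O}_{\mathbb{P}^1}(-2)\oplus\mathcal{O}_{\mathbb{P}^1}(-1)^{\oplus\dim\rho-1}\big)=\CC[-1]$. That would give $\Phi(\mathcal{O}_\ell(-1))\cong S_\rho[-1]$, off by a shift from the claim.

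Your fallback plans would not rescue that scenario either:
\begin{itemize}
\item Replacing $F$ by $\mathcal{O}_\ell(1)$ produces nonzero components $\Rderived\Gamma(\mathcal{O}_{\mathbb{P}^1}(1))^{\oplus\dim\sigma}$ for every $\sigma\neq\rho$, including $\rho_0$.
\item Serre duality on $Y$ cannot move the nonzero cohomology of $\Phi(\mathcal{O}_\ell(-1))$ from degree $1$ to degree $0$.
\item Saying that ``the statement forces the answer'' is circular.
\end{itemize}

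So the argument is complete only once you insert the projection-formula computation above. With it, your bullet points give exactly $\Phi(\mathcal{O}_\ell(-1))\cong\CC\otimes\rho$ in degree $0$, and the rest goes through as you describe.
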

 \begin{proof}
  For $k\in \mathbb{Z}$, write $\Phi^k(-)=H^k(\Phi(-))$ for the $k$th cohomology $A$-module with respect to the standard $t$-structure on $D^b(A)$. Apply \cite[(2.2)]{CI04} to obtain an isomorphism
  \begin{equation}
  \label{eqn:Phiksum}
\Phi^k\big(\det(\mathcal{R}_\rho)^{-1}\vert_{\ell})\big) \cong 
  \bigoplus_{\sigma\in \Irr(G)} H^k\big(\mathcal{R}_\sigma\otimes \det(\mathcal{R}_\rho)^{-1}\vert_{\ell}\big)\otimes \sigma.
  \end{equation}
  Lemma~\ref{lem:deg0or1} shows that $\det(\mathcal{R}_\rho)^{-1}\vert_\ell \cong \det(\mathcal{R}_\rho^\vee)\vert_\ell \cong \mathcal{O}_{\mathbb{P}^1}(-1)$ and that $\mathcal{R}_\sigma\vert_\ell \cong \mathcal{O}_{\mathbb{P}^1}^{\oplus \dim(\sigma)}$ for $\sigma\neq \rho$. It follows that for $\sigma\neq \rho$ and for all $k\in \mathbb{Z}$, we have \[
  H^k\big(\mathcal{R}_\sigma\otimes \det(\mathcal{R}_\rho)^{-1}\vert_{\ell}\big)
   \cong 
   H^k\big(\mathcal{O}_{\mathbb{P}^1}(-1)\big)^{\oplus \dim(\sigma)}= 0.
   \]
 On the other hand, we use Lemma~\ref{lem:deg0or1} to compute
 \[
 H^k\big(\mathcal{R}_\rho\otimes \det(\mathcal{R}_\rho)^{-1}\vert_{\ell}\big)
 \cong 
 H^k(\mathcal{O}_{\mathbb{P}^1}) \oplus H^k\big(\mathcal{O}_{\mathbb{P}^1}(-1)\big)^{\oplus \dim(\rho)-1} 
 \cong 
 \left\{\begin{array}{cr} \CC & \text{ for } k=0 \\ 0 & \text{ otherwise.}
 \end{array}\right.
 \]
 Substituting back into \eqref{eqn:Phiksum} shows that $\Phi^k(\det(\mathcal{R}_\rho)^{-1}\vert_{\ell}))=0$ for $k\neq 0$, hence 
\[
\Phi\big(\det(\mathcal{R}_\rho)^{-1}\vert_{\ell}\big)\cong \Phi^0\big(\det(\mathcal{R}_\rho)^{-1}\vert_{\ell}\big)\cong H^0\big(\mathcal{R}_\rho\otimes \det(\mathcal{R}_\rho)^{-1}\vert_{\ell}\big)\otimes \rho\cong \CC\otimes \rho= S_\rho
 \]
 as a complex concentrated in degree zero. Applying $\Psi$ gives 
 \[
 \Psi(S_\rho)\cong \det(\mathcal{R}_\rho)^{-1}\vert_{\ell}.
 \]
 In particular, the support of the object $\Psi(S_\rho)$ is the curve $\ell \subseteq Z_\rho$. However, Proposition~\ref{prop:support} shows that $\supp \Psi(S_\rho)=Z_\rho$, so $\ell = Z_\rho$ as Zariski closed sets.
    \end{proof}

 \section{Case-by-case analysis of the walls}
 The stability parameter $\eta$ that determines the line bundle $L_C(\eta)$ from \eqref{eqn:LC+eta} is general in the wall $W_\rho=\overline{C}\cap (\theta_\rho=0)$ of the chamber $C$, so $L_C(\eta)$ lies in the relative interior of a codimension-one face of the nef cone of $Y$. It follows that $\cont_{W_\rho}$ cannot be factored as the composition of more than one nontrivial birational morphism. Following \cite[Definition~3.5]{CI04}, we say that $W_\rho$ is a wall of \emph{type $\0$, $\I$, $\II$, or $\III$} as follows:
 \begin{itemize}
     \item  type $\0$ if $\cont_{W_\rho}$ is an isomorphism.
     \item type $\I$ if $\cont_{W_\rho}$ contracts a curve to a point.
     \item type $\II$ if $\cont_{W_\rho}$ contracts a surface to a point.
     \item type $\III$ if $\cont_{W_\rho}$ contracts a surface to a curve.
 \end{itemize}
 This classification allows us to study the objects $\Psi(S_\rho)$ for which $W_\rho$ is a wall of the chamber $C$ according to the type of the wall.

 We begin this case-by-case analysis by studying walls of type $\I$ and $\III$.

\begin{proposition}
\label{prop:typeIorIII}
  Let $W_\rho$ be a wall of type $\I$ or $\III$. 
  The locus $Z_\rho$ in $Y$ is isomorphic to $\mathbb{P}^1$, and
  \[
 \Psi(S_\rho) \cong \det(\mathcal{R}_\rho)^{-1}\vert_{Z_\rho}.
 \]
 In particular, $\Psi(S_\rho)$ is a sheaf and hence $H^k\big(\Psi(S_\rho)\big)=0$ for $k\neq 0$.
 \end{proposition}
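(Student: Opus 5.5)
The plan is to reduce to Proposition~\ref{prop:contractP1}: it suffices to exhibit a curve $\ell\cong\mathbb{P}^1$ with $\ell\subseteq Z_\rho$ and $\ell$ contracted by $\cont_{W_\rho}$. Once such an $\ell$ is found, Proposition~\ref{prop:contractP1} gives $\ell=Z_\rho$ and $\Psi(S_\rho)\cong\det(\mathcal{R}_\rho)^{-1}\vert_{Z_\rho}$, which is a sheaf concentrated in degree zero. Since $\ell=Z_\rho$ as closed sets and $\ell\cong\mathbb{P}^1$, we also get $Z_\rho\cong\mathbb{P}^1$.

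To produce $\ell$, I would take an extremal ray of $\overline{\operatorname{NE}}(Y)$ contracted by $\cont_{W_\rho}$. Because $L_C(\eta)$ lies in the relative interior of a codimension-one face of the nef cone, the curves it kills span a single extremal ray $R$. By Lemma~\ref{lem:curveintauinverse0}, $R$ is spanned by a proper curve in $\tau^{-1}(\pi(0))$. For walls of type $\I$ or $\III$, the exceptional locus of $\cont_{W_\rho}$ is either a finite union of curves or a surface fibred over a curve. I would use the standard description of contractions of extremal rays on smooth threefolds with $K_Y=0$ relative to $\tau$ (small contraction, or divisorial contraction to a curve) to see that the contracted curves are, respectively, $\mathbb{P}^1$'s or general fibres of a ruling that are smooth $\mathbb{P}^1$'s. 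Using the $\CC^*$-degeneration argument of Lemma~\ref{lem:curveintauinverse0}, I would then choose such a smooth rational curve $\ell$ inside $\tau^{-1}(\pi(0))$. In type $\III$, a fibre over a point of the image curve lying over $\pi(0)$ should work. Possibly some care is needed to avoid singular fibres, for instance by taking a general fibre over the image curve, which lies in $\tau^{-1}(\pi(0))$ if that curve maps to $\pi(0)$.

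Next, show $\ell\subseteq Z_\rho$. Take distinct $y,z\in\ell$. They map to the same point of $\mathcal{M}_\eta(A,v)$, so $V_y$ and $V_z$ are $\Seshadri$-equivalent $\eta$-semistable modules. They are non-isomorphic, since $Y$ is a fine moduli space and $y\neq z$, so neither can be $\eta$-stable. Otherwise its $\Seshadri$-class would be its isomorphism class, and the other module, being $\Seshadri$-equivalent with the same graded object, would be isomorphic to it. Hence both are strictly $\eta$-semistable. Since $\ell\subseteq\tau^{-1}(\pi(0))$, this gives $\ell\subseteq Z_\rho$ by \eqref{eqn:Zrho}.

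The main obstacle is the geometric input in the second step: guaranteeing that a contracted curve inside the central fibre is actually a \emph{nonsingular} rational curve $\cong\mathbb{P}^1$, rather than a singular or nonreduced limit. In type $\III$ especially, the fibres of the exceptional surface over special points may be reducible. My first attempt would be to choose the fibre over a general point of the image curve. Failing that, I would take an irreducible component of a fibre: components of fibres of a divisorial contraction to a curve on a smooth threefold are smooth rational curves, by Mori/Cutkosky, since $\cont_{W_\rho}$ is a $K$-trivial contraction and locally analytically of this type.
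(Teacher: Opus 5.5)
Your outline matches the paper's proof: reduce to Proposition~\ref{prop:contractP1}, use Lemma~\ref{lem:curveintauinverse0} to find a curve $\ell'$ in the contracted extremal ray lying in $\tau^{-1}(\pi(0))$, show that contracted curves lie in the strictly $\eta$-semistable locus, and extract a nonsingular rational curve from $\ell'$. Your $\Seshadri$-equivalence argument via the fine moduli property correctly proves the containment in the strictly $\eta$-semistable locus, which the paper only asserts.

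The step that needs fixing is the last one, which you rightly flagged as the obstacle. Mori's and Cutkosky's theorems concern $K$-negative contractions, so they do not apply to the crepant contraction $\cont_{W_\rho}$. The paper instead uses Andreatta--Wi\'sniewski~\cite[Corollary~1.5.1]{AW98}: for a crepant contraction of a smooth variety with fibres of dimension at most one, the reduced structure on every irreducible component of every fibre is $\mathbb{P}^1$.

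With that result you can drop the ``general fibre'' idea. In type $\III$ the contracted surface is typically non-proper, and its image curve need not lie over $\pi(0)$. Instead, take an irreducible component of $\ell'$ directly. Since $\ell'$ is connected and the nef bundle $L_C(\eta)$ has degree zero on it, $\ell'$ lies in a single fibre of $\cont_{W_\rho}$. Its components are therefore fibre components, hence copies of $\mathbb{P}^1$ lying in $Z_\rho$.
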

 \begin{proof}
  By Proposition~\ref{prop:contractP1}, it suffices to show that the locus $Z_\rho$ contains a curve $\ell\cong \mathbb{P}^1$ that is contacted by $\cont_{W_\rho}$. The morphism $\cont_{W_\rho}$ contracts all curves whose numerical class lies in the extremal ray of the Mori cone of curves $\overline{\text{NE}}(Y)$ that is dual to the codimension-one face of $\Nef(Y)$ containing $L_C(\eta)$. The exceptional locus of $\cont_{W_\rho}$ is contained in the locus of $\eta$-strictly semistable points, and hence so is each curve in the numerical class of the extremal ray. Lemma~\ref{lem:curveintauinverse0} shows that a curve $\ell'$ in this numerical class also lies in $\tau^{-1}(\pi(0))$, so $\ell'\subseteq Z_\rho$. Since $W_\rho$ is of type $\I$ or $\III$, the morphism $\cont_{W_\rho}$ is of fibre dimension at most one,  and it is a crepant contraction from the nonsingular variety $Y$,
  so the underlying reduced scheme of each irreducible component of the fibre $\cont_{W_\rho}^{-1}(y)$ is  $\mathbb{P}^1$ by \cite[Corollary~1.5.1]{AW98}. In particular, the curve $\ell'$ is an irreducible component of the fibre that contains $\ell:=\mathbb{P}^1$ as a closed subscheme. The result now follows from Proposition~\ref{prop:contractP1} because $\ell\subseteq \ell'\subseteq Z_\rho$. 
 \end{proof}

 Next, we consider the type $\II$ case.
 
 \begin{proposition}
 \label{prop:notypeII}
 The chamber $C$ does not have a wall of type $\II$ of the form $W_\rho=\overline{C}\cap(\theta_\rho=0)$.
 \end{proposition}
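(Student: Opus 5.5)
Suppose, for a contradiction, that $W_\rho$ is a wall of type $\II$. Then $\cont_{W_\rho}$ contracts an irreducible surface $E\subseteq Y$ to a point. The plan is to show that the strictly $\eta$-semistable modules parametrised by $E$ fall into only finitely many S-equivalence classes, all of one very rigid form, and then to use Corollary~\ref{cor:Lin} together with the support computation of Proposition~\ref{prop:support} to reach a contradiction.

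First I would place $E$ inside $Z_\rho$. The exceptional locus of $\cont_{W_\rho}$ lies in the strictly $\eta$-semistable locus. Since $E$ is proper and $\mathbb{C}^3/G$ is affine, $\tau(E)$ is a point. Moreover $E$ is swept out by curves in the contracted extremal ray, and the $\mathbb{C}^*$-equivariance argument of Lemma~\ref{lem:curveintauinverse0} applies: $E$ is proper and is moved by $\mathbb{C}^*$, so it must be fixed. Hence $\tau(E)=\pi(0)$, and therefore $E\subseteq Z_\rho$.

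Next I would analyse a point $y\in E$ via Lemma~\ref{lem:Homnonzero}, which gives $0\to S_\rho\to V_y\to Q_y\to 0$. All points of $E$ map to the same point of $\mathcal{M}_\eta(A,v)$, so the modules $V_y$ for $y\in E$ share a common polystable representative $S_\rho^{\oplus m}\oplus Q$, where the factors of $Q$ are $\eta$-stable. The key observation should be that $\dim (V_y)_\rho=\dim\rho$ forces $m\le \dim\rho$. Combining Lemma~\ref{lem:h} with $\Ext^1_A(V_y,V_z)=0$ for $y\neq z$ (as in \eqref{eqn:ExtBKR}), I would then show that $\Psi(S_\rho)$ restricted to a neighbourhood of $E$ behaves like the object supported on the fibre of a contraction with fibre of dimension $2$.

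The decisive step, and the one I expect to be the main obstacle, is to derive the actual contradiction. Two routes seem available.

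The first route is numerical. By the argument in the proof of Lemma~\ref{lem:deg0or1}, on every curve $\ell\subseteq E$ contracted by $\cont_{W_\rho}$, each $\mathcal{R}_\sigma$ with $\sigma\neq\rho$ is trivial and $\det\mathcal{R}_\rho$ has degree $1$. Applying the same argument to the restrictions to $E$ itself, using global generation and the fact that $\eta_\sigma>0$, I would obtain $\det\mathcal{R}_\sigma|_E\cong\mathcal{O}_E$ for every $\sigma\neq\rho$. Consequently $\Pic(Y)\to\Pic(E)$ would factor through the span of $\det\mathcal{R}_\rho$, so all curves in $E$ would be numerically proportional in $Y$. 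Now compute $K_E=(K_Y+E)|_E=E|_E$. For a crepant contraction of a divisor to a point, $-K_E$ is ample, so $E|_E$ is anti-ample. However, $E|_E$ is a line bundle on $Y$ restricted to $E$, hence a multiple of $\det\mathcal{R}_\rho|_E$, which is nef because $\mathcal{R}_\rho$ is globally generated. A line bundle cannot be both anti-ample and a multiple of a nef bundle unless the multiplier is negative. The contradiction should therefore come from the perfect pairing argument of Lemma~\ref{lem:deg0or1}: the generator $\det\mathcal{R}_\rho$ pairs to $1$ with a minimal curve, whereas $E\cdot\ell=K_E\cdot\ell\le -1$ and $(\det\mathcal{R}_\rho)|_E$ must be compatible with integrality. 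I am unsure this closes the argument, since it may only yield $E|_E=-k\det\mathcal{R}_\rho|_E$ with $k\geq 1$, which is consistent.

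If the numerical route fails, I would fall back on the second route: compute $\Psi(S_\rho)$ directly as $\det(\mathcal{R}_\rho)^{-1}$ twisted appropriately on $E$, following the method of Proposition~\ref{prop:contractP1}. The aim is to show that $\Phi$ of such a sheaf has cohomology in some $\sigma$-component with $\sigma\neq\rho$. This would contradict the fact that $\Phi(\Psi(S_\rho))=S_\rho$ has dimension vector supported only at $\rho$, equivalently the relation $\chi(\mathcal{R}_\sigma^\vee\otimes-)=0$ in the relevant range.
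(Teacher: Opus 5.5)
There is a genuine gap: neither of your two routes produces a contradiction. Route~1 cannot succeed. As you suspected, the constraints it yields are mutually consistent: $\det(\mathcal{R}_\sigma)|_E$ trivial for $\sigma\neq\rho$, all curves in $E$ numerically proportional, and $E|_E\equiv -k\det(\mathcal{R}_\rho)|_E$ with $k\geq 1$. For example, $E\cong\mathbb{P}^2$ with $E|_E\cong\mathcal{O}(-3)$ and $\det(\mathcal{R}_\rho)|_E\cong\mathcal{O}(1)$ (the numerics of local $\mathbb{P}^2$) satisfies all of them. So no intersection-theoretic or integrality argument on $E$ gives a contradiction. Route~2 is logically flawed. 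Nothing tells you that $\Psi(S_\rho)$ is a twist of $\det(\mathcal{R}_\rho)^{-1}$ on $E$. Showing that $\Phi$ of some guessed sheaf on $E$ has a nonzero $\sigma$-component with $\sigma\neq\rho$ only shows the guess is wrong, not that the wall cannot exist. The S-equivalence analysis in your second paragraph is never used.

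The missing idea is to pass from the surface to a curve and then use the support computation rather than numerics. By Reid, the contracted surface $E$ is Gorenstein with $\omega_E^{-1}$ ample. Hidaka--Watanabe (normal case) or Reid's work on nonnormal del Pezzo surfaces then provides a nonsingular rational curve $\ell\cong\mathbb{P}^1$ in $E$. This is exactly the hypothesis you tacitly used when applying Lemma~\ref{lem:deg0or1} to curves in $E$, and it is not automatic. Since $\ell\subseteq E\subseteq Z_\rho$ is contracted by $\cont_{W_\rho}$, Proposition~\ref{prop:contractP1} gives $\Psi(S_\rho)\cong\det(\mathcal{R}_\rho)^{-1}|_\ell$. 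Proposition~\ref{prop:support} then gives $Z_\rho=\supp\Psi(S_\rho)=\ell$, which is absurd because $E\subseteq Z_\rho$.

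Your conclusion $E\subseteq Z_\rho$ is correct, but the justification should be different. If $\tau(E)=t\neq\pi(0)$, the translates $\lambda\cdot E$ would sweep out a threefold contracted by the birational morphism $\tau$. Lemma~\ref{lem:curveintauinverse0} only moves a curve class into the fibre; it does not place a given subvariety there.
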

 \begin{proof}
 Suppose otherwise, so a surface $S$ is contracted to a point by $\cont_{W_\rho}$. The crepant resolution $\tau\colon Y\to \mathbb{C}^3/G$ factors via $\cont_{W_\rho}$, so $\cont_{W_\rho}$ is a crepant resolution of a canonical threefold.  Reid~\cite{Reid79} showed that $S$ is Gorenstein with $\omega_S^{-1}$ ample. 
 We claim that $S$ contains a nonsingular rational curve $\ell\cong \mathbb{P}^1$. When $S$ is normal, this follows from Hidaka--Watanabe~\cite[Theorems~2.2 and~3.4]{HW81}, and for non-normal $S$, it follows from Reid~\cite[Proposition 4.9]{Reid94} (compare also Mori~\cite[(3.35)]{Mori82}). Then $\ell\subseteq Z_\rho$ because $S\subseteq Z_\rho$, so Proposition~\ref{prop:contractP1} gives $\ell=Z_\rho$. This is absurd because $S\subseteq Z_\rho$.
 \end{proof}

Finally, suppose $C$ has a wall of type $\0$ of the form $W_\rho=\overline{C}\cap (\theta_\rho=0)$, and choose $\eta\in \relint(W_\rho)$ as before. For each $z\in Z_\rho$, recall from \eqref{eqn:sesV_z} that there is a destabilising short exact sequence
 \[
 0\longrightarrow S_\rho\longrightarrow V_z\longrightarrow Q_z\longrightarrow 0
 \]
 of $\eta$-semistable $A$-modules.
  
  \begin{lemma}
 \label{lem:Hom2}
  For any wall $W_\rho$ of type $\0$, we have  
 \[
\Hom_{D(Y)}^2\big(\Psi(S_\rho),\mathcal{O}_y\big) = 0 \text{ for all }y\in Y.
 \]
 \end{lemma}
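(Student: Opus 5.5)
We will prove the equivalent statement $\Ext^2_A(S_\rho,V_y)=0$ for all $y\in Y$, using $\Hom^2_{D(Y)}(\Psi(S_\rho),\mathcal{O}_y)\cong\Ext^2_A(S_\rho,V_y)$. Since $A$ is CY3, Serre duality gives $\Ext^2_A(S_\rho,V_y)\cong\Ext^1_A(V_y,S_\rho)^*$. So the aim is to show $\Ext^1_A(V_y,S_\rho)=0$, following the argument in the proof of Proposition~\ref{prop:support}. We split according to whether or not $y\in Z_\rho$.

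\emph{Case $y\notin Z_\rho$.} By Proposition~\ref{prop:support}, $y\notin\supp\Psi(S_\rho)$. Hence $\Hom^k_{D(Y)}(\Psi(S_\rho),\mathcal{O}_y)=0$ for every $k$, and in particular for $k=2$. This case uses nothing about the type of the wall.

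\emph{Case $y\in Z_\rho$.} This is where type $\0$ is used. Since $\cont_{W_\rho}$ is an isomorphism, distinct points of $Y$ map to distinct points of $\mathcal{M}_\eta(A,v)$. By the discussion preceding \eqref{eqn:LC+eta}, this means that for distinct $y,z\in Z_\rho$ the modules $V_y$ and $V_z$ are not $\Seshadri$-equivalent. Corollary~\ref{cor:Lin} then gives $\Hom_A(V_y,Q_z)=0$ for all $z\in Z_\rho\setminus\{y\}$. Such a $z$ exists: Proposition~\ref{prop:support} and Lemma~\ref{lem:Hom3} show that every irreducible component of $Z_\rho$ has dimension at least one, so $Z_\rho$ is not the single point $y$.

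Fix such a $z\neq y$ and apply $\Hom_A(V_y,-)$ to the destabilising sequence \eqref{eqn:sesV_zdestab}, namely $0\to S_\rho\to V_z\to Q_z\to 0$. This gives an exact sequence
\[
\Hom_A(V_y,Q_z)\longrightarrow \Ext^1_A(V_y,S_\rho)\longrightarrow \Ext^1_A(V_y,V_z).
\]
The first term vanishes by the previous paragraph. The last term vanishes by \eqref{eqn:ExtBKR}, since $y\neq z$ and $\Ext^1_{\mathcal{O}_Y}(\mathcal{O}_y,\mathcal{O}_z)=0$. Therefore $\Ext^1_A(V_y,S_\rho)=0$, and Serre duality gives the claim.

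The main obstacle is justifying carefully that ``type $\0$'' yields the absence of nontrivial $\Seshadri$-equivalences among the $V_y$ with $y\in Z_\rho$. The plan is to use that $\cont_{W_\rho}$ is an isomorphism, which already forces injectivity of the composite $Y\to Y_\eta\to \mathcal{M}_\eta(A,v)$ on points. We then invoke the stated criterion that two points have the same image in $\mathcal{M}_\eta(A,v)$ exactly when the corresponding $G$-clusters are $\Seshadri$-equivalent as $\eta$-semistable modules.
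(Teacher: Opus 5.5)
Your argument is the paper's argument step for step. You reduce to $y\in Z_\rho$ via Proposition~\ref{prop:support}, pick $z\in Z_\rho\setminus\{y\}$, and apply $\Hom_A(V_y,-)$ to $0\to S_\rho\to V_z\to Q_z\to 0$. You then kill $\Ext^1_A(V_y,V_z)$ by \eqref{eqn:ExtBKR} and $\Hom_A(V_y,Q_z)$ by Corollary~\ref{cor:Lin}, and finish with Serre duality.

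The one step whose justification does not hold as written is the one you flag yourself. The morphism $\cont_{W_\rho}$ lands in $Y_\eta$, which is the \emph{normalisation} of the image of $Y\to\mathcal{M}_\eta(A,v)$. A normalisation map is finite and birational, but it need not be injective on points: it glues points whenever the image has several analytic branches through a point. So ``$\cont_{W_\rho}$ is an isomorphism'' does not by itself make $Y\to\mathcal{M}_\eta(A,v)$ injective. The paper gets this injectivity for type $\0$ walls by citing \cite[Lemma~3.3]{CI04}.

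You can also avoid the citation, because you never need full injectivity, only one good $z$. Since $Y\to Y_\eta$ is an isomorphism and the normalisation is finite, the fibre of $Y\to\mathcal{M}_\eta(A,v)$ through $y$ is finite. By the stated criterion, this means only finitely many $z\in Y$ have $V_z$ $\Seshadri$-equivalent to $V_y$. The irreducible component of $Z_\rho$ through $y$ has dimension at least one, so it contains a point $z$ outside this finite set. For that $z$, Corollary~\ref{cor:Lin} gives $\Hom_A(V_y,Q_z)=0$, and the rest of your argument goes through unchanged.
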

 \begin{proof}
 This follows for $y\not\in \supp(\Psi(S_\rho))$ by \cite[Lemma~5.3]{BM02}. Proposition~\ref{prop:support} shows that it's enough to prove the result for $y\in Z_\rho$. 
 Each irreducible component of $Z_\rho$ has dimension at least one 1 by Proposition~\ref{prop:support} and Lemma~\ref{lem:Hom3}, so there exists $z\in Z_\rho$ such that $z\neq y$. Apply $\Hom_A(V_y,-)$ to the above destabilising short exact sequence to obtain an exact sequence
 \[
 \Hom_A(V_y,Q_z)\longrightarrow \Ext^1_A(V_y,S_\rho)\longrightarrow \Ext^1_A(V_y,V_z)
 \]
of $\CC$-vector spaces. Since $y\neq z$, we have $\Ext^1_A(V_y,V_z)=0$ precisely as in \eqref{eqn:ExtBKR}. Now, suppose for a contradiction that $\Hom_A(V_y,Q_z)\neq 0$. Corollary~\ref{cor:Lin} implies that $V_y$ and $V_z$ are $\Seshadri$-equivalent, so the points $y$ and $z$ of $Y$ are identified by the VGIT morphism $Y \to \mathcal{M}_\eta(A,v)$. However, $W_\rho$ is a wall of type $\0$, so this morphism is injective by Craw--Ishii~\cite[Lemma~3.3]{CI04}, giving $y=z$,
 a contradiction. Thus $\Hom_A(V_y,Q_z)=0$ after all, and the above exact sequence gives $\Ext_A^1(V_y,S_\rho)=0$, so  
\[
\Hom_{D(Y)}^2\big(\Psi(S_\rho),\mathcal{O}_y\big) \cong \Ext^2_A(S_\rho, V_y) \cong\Ext_A^1(V_y,S_\rho)^* =0 
\]
for all $y\in Z_\rho$ as required. 
 \end{proof}

 \begin{proposition}
\label{prop:type0}
 For any wall $W_\rho$ of type $\0$, we have $H^k\big(\Psi(S_\rho)\big)=0$ for all $k\neq 0$.
 Moreover, every irreducible component of $\supp\Psi(S_\rho) = Z_\rho$ has dimension two.
\end{proposition}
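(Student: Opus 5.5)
We mirror the argument of Proposition~\ref{prop:type1}, now using the vanishing in degree two supplied by Lemma~\ref{lem:Hom2}. By Lemma~\ref{lem:Hom3}, $\Psi(S_\rho)$ is quasi-isomorphic to a complex of locally-free sheaves $P^{-2}\to P^{-1}\to P^0$. Lemma~\ref{lem:Hom2} gives $\Hom^2_{D(Y)}(\Psi(S_\rho),\mathcal{O}_y)=0$ for every $y\in Y$. Together with the vanishing of $\Hom^3$ from \eqref{eqn:Hom3}, we apply the Bridgeland--Maciocia intersection theorem \cite[Proposition~5.4]{BM02} once more. This shows that $\Psi(S_\rho)$ is quasi-isomorphic to a two-term complex of locally-free sheaves
\[
P^{-1}\stackrel{f}{\longrightarrow} P^0.
\]
We will arrange the indexing so that the lowest possible nonzero $\Hom^k(-,\mathcal{O}_y)$ corresponds to $P^0$. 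That is, vanishing of $\Hom^k$ for $k\geq 2$ for all $y$ truncates the resolution to degrees $[-1,0]$.

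Next we show that $\ker(f)=0$. The kernel $H^{-1}(\Psi(S_\rho))=\ker f$ is a subsheaf of the locally-free sheaf $P^{-1}$ on the integral scheme $Y$, so it is torsion free. On the other hand, it is supported on $\supp\Psi(S_\rho)=Z_\rho$, which is proper by Proposition~\ref{prop:support} and contained in $\tau^{-1}(\pi(0))$, a proper subset of $Y$. A torsion-free sheaf supported on a proper closed subset must vanish, so $\ker f=0$. Hence $\Psi(S_\rho)\cong \operatorname{coker}(f)$ is a sheaf in degree $0$, and $H^k(\Psi(S_\rho))=0$ for $k\neq 0$.

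For the dimension statement, $\Psi(S_\rho)$ now has homological dimension at most one, since it is resolved by $0\to P^{-1}\to P^0\to 0$. By \cite[Corollary~5.5]{BM02}, every irreducible component of its support therefore has codimension at most one in the threefold $Y$, i.e.\ dimension at least two. Since the support is proper and lies in $\tau^{-1}(\pi(0))$, it has dimension at most two: the fibre of a crepant resolution of a threefold quotient singularity has dimension $\le 2$, and in any case it cannot be all of $Y$. Combining these with $\supp\Psi(S_\rho)=Z_\rho$ from Proposition~\ref{prop:support}, every irreducible component of $Z_\rho$ has dimension exactly two.

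The main obstacle is checking that the intersection theorem really yields the two-term complex in degrees $[-1,0]$ and not merely some two-term shape. This requires careful matching of the index conventions of \cite[Proposition~5.4]{BM02} with the vanishing of $\Hom^2$ and $\Hom^3$ for all points, exactly as was done for degrees $[-2,-1]$ in Proposition~\ref{prop:type1} using the vanishing of $\Hom^0$ and $\Hom^3$. Everything else is formal once this is in place.
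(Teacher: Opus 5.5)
Your proposal is correct and is essentially the paper's proof: the vanishing of $\Hom^2$ (Lemma~\ref{lem:Hom2}) and of $\Hom^3$ feeds into \cite[Proposition~5.4]{BM02} to give $P^{-1}\to P^0$, the proper support forces $\ker f=0$ (your torsion-freeness argument just spells out the paper's one-line step), and \cite[Corollary~5.5]{BM02} together with Proposition~\ref{prop:support} gives the dimension claim. The indexing you flag as the main obstacle is not a problem: $\Hom^k(\Psi(S_\rho),\mathcal{O}_y)\cong\Ext^k_A(S_\rho,V_y)$ can only be nonzero for $k\in\{0,1\}$, so the intersection theorem places the complex in degrees $[-1,0]$, by the same bookkeeping as in Lemma~\ref{lem:Hom3} and Proposition~\ref{prop:type1}.
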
  
\begin{proof} 
The vanishing of $\Hom_{D(Y)}^2(\Psi(S_i),\mathcal{O}_y)$ for all $y\in Y$ from Lemma~\ref{lem:Hom2} allows us to strengthen the argument in the proof of Lemma~\ref{lem:Hom3} to deduce that $\Psi(S_\rho)$ is quasi-isomorphic to a complex of locally-free sheaves of the form
 \[
 P^{-1}\stackrel{f}{\longrightarrow} P^{0}.
 \]
 The object $\Psi(S_\rho)$ has proper support by Proposition~\ref{prop:support}, so 
 $\ker(f)=0$. We may therefore identify $\Psi(S_\rho)$ with the cokernel of a map of locally free sheaves of the same rank, so it is a coherent sheaf, giving $H^k\big(\Psi(S_\rho)\big)=0$ for all $k\neq 0$. The assertion about the dimension of the support of $\Psi(S_\rho)$ follows from \cite[Corollary~5.5]{BM02} and Proposition~\ref{prop:support}.  
\end{proof}

\begin{proof}[Proof of Theorem~\ref{thm:mainintro}]
Let $\rho\neq \rho_0$. If the hyperplane $(\theta_\rho=0)$ intersects the chamber $C$, then 
 \[
 H^k\big(\Psi(S_\rho)\big)=0 \quad\text{for }k\neq -1
 \]
 and every irreducible component of the support of $\Psi(S_\rho)$ is a surface by Proposition~\ref{prop:type1}. It follows that the complex $\Psi(S_\rho)$ is concentrated in degree $-1$, so shifting it to the right by one produces the new object $\Psi(S_\rho)[-1]$ which is a coherent sheaf. Also, since $\Psi$ is an equivalence of derived categories, we have
 \[
 \End_{D(Y)}\big(\Psi(S_\rho)\big) \cong \End_A(S_\rho) \cong \CC,
 \]
 so the support of $\Psi(S_\rho)$ is connected. This completes the analysis in case \ensuremath{(-)}.

 Otherwise, Lemma~\ref{lem:C} shows that $\overline{C}\cap (\theta_\rho=0)$ is a wall of the form $W_\rho$. This wall cannot be of type $\II$ by Proposition~\ref{prop:notypeII}, so it must be of type $\0$, $\I$ or $\III$. Suppose first that $W_\rho$ is of type $\I$ or $\III$. Then Proposition~\ref{prop:typeIorIII} shows that the locus $Z_\rho$ in $Y$ is isomorphic to $\mathbb{P}^1$, and
  \[
 \Psi(S_\rho) \cong \det(\mathcal{R}_\rho)^{-1}\vert_{Z_\rho}.
 \]
 In particular, $\Psi(S_\rho)$ is a sheaf and hence 
 \[
 H^k\big(\Psi(S_\rho)\big)=0\quad \text{for }k\neq 0.
 \]
 Lemma~\ref{lem:deg0or1} shows that $\det(\mathcal{R}_\rho)$ has degree one on the curve $\ell:= Z_\rho$, so $\det(\mathcal{R}_\rho)^{-1}\vert_{Z_\rho}\cong \mathcal{O}_{\ell}(-1)$. This completes the analysis in case \ensuremath{(0)}. 
 
 Finally, suppose that $W_\rho$ is of type $\0$. 
  Proposition~\ref{prop:type0} shows that 
  \[
 H^k\big(\Psi(S_\rho)\big)=0 \quad\text{for }k\neq 0,
 \]
 so Conjecture~\ref{conj:CLintro} holds, with $k=-1$ if and only if the hyperplane $(\theta_\rho=0)$ intersects $C$. 
 Moreover, Proposition~\ref{prop:type0} also shows that every irreducible component of $\supp\Psi(S_\rho)$ has dimension two. The same argument as in case \ensuremath{(-)} above shows that the support of $\Psi(S_\rho)$ is connected.
\end{proof}

 \begin{proof}[Proof of Theorem~\ref{thm:BCQconjectureintro}]
  Case \ensuremath{(0)} from Theorem~\ref{thm:mainintro} occurs only when the nontrivial representation $\rho$ determines a wall $W_\rho$ of type $\I$ or $\III$, in which case the result follows from Proposition~\ref{prop:typeIorIII}.
  \end{proof}

\section{A sign coherent matrix}
\label{sec:signcoherent}
 As an application of Theorem~\ref{thm:mainintro}, we prove Theorem~\ref{thm:signcoherentintro}.  
 Write $\Pic(Y)_{\mathbb{Q}}$
  for the rational Picard group of $Y$. The \emph{linearisation map} for $Y=\ghilb(\mathbb{C}^3)$ is the linear map of rational vector spaces
  \begin{equation}
      \label{eqn:linearisation}
   L_C\colon \Theta\longrightarrow \Pic(Y)_{\mathbb{Q}}
  \end{equation}
   that sends $\eta\in \Theta$ to $\bigotimes_{\sigma\neq \rho_0} \det(\mathcal{R}_\sigma)^{\otimes \eta_\sigma}$ as in \eqref{eqn:LC+eta}. 

   Let $K(Y)$ and $K_0(Y)$ denote the Grothendieck groups of the category of coherent sheaves on $Y$ and the category of coherent sheaves on $Y$ with support in $\tau^{-1}(\pi(0))$ respectively. Consider the topological filtration 
 \[
K(Y)_{\mathbb{Q}}=F^0\supseteq F^1\supseteq F^2\supseteq \{0\}
  \]
 where $F^m$ denotes the subspace spanned by sheaves with support of codimension at least $m$, and the dimension filtration
  \[
 \{0\}\subseteq F_0\subseteq F_1 \subseteq F_2 = K_0(Y)_{\mathbb{Q},
 }\]
  where $F_m$ is spanned by sheaves with support of dimension at most $m$. Craw--Ishii~\cite[Proposition~5.1]{CI04} implies that the Euler form gives a perfect pairing $\chi\colon K(Y)_{\QQ}\times K_{0}(Y)_{\QQ}\to \QQ$, where 
 \[
 K_0(Y)_{\QQ} = \bigoplus_{\rho\in \Irr(G)} \QQ [\Psi(S_\rho)],
 \]
 and $F^1=F_0^\perp$ and $F^2=F_1^\perp$ with respect to $\chi$. In addition, \cite[Corollary~5.2]{CI04} shows that the derived equivalence induces an isomorphism on Grothendieck groups that identifies $\Theta$ with $F^1$. Since $\Pic(Y)_{\mathbb{Q}}\cong F^1/F^2$,  we obtain a short exact sequence
       \begin{equation}
       \label{eqn:sesPic}
 \begin{tikzcd}
 0\ar[r] & F^2\ar[r] & F^1\ar[r,"L_C"] & \Pic(Y)_{\QQ}\ar[r] & 0
  \end{tikzcd}
  \end{equation}
of $\QQ$-vector spaces.  The \emph{Gale dual} map to $L_C$ is the map $G_C$ in the dual short exact sequence 
      \begin{equation}
 \begin{tikzcd}
   \label{eqn:sesPicdual}
 0 & F_2/F_1\ar[l] & F_2/F_0\ar[l,swap,"G_C"] & \Pic(Y)_{\QQ}^*\ar[l] & 0\ar[l]
  \end{tikzcd}
  \end{equation}
of $\QQ$-vector spaces. Recall the geometric interpretation of $G_C$ by the first author \cite[Theorem~1.1]{C21}:

   \begin{proposition}
   \label{prop:GaleDual}
  The classes $\{[\Psi(S_\rho)] \mid \rho\neq \rho_0\}$ provide a basis for the $\QQ$-vector space $F_2/F_0$. In addition, the $\QQ$-linear map $G_C\colon F_2/F_0 \to F_2/F_1$ is determined by  \begin{equation}
     \label{eqn:GCintro}
G_{\chamber}\big([\Psi(S_\rho)]\big) = \sum_{k\in \mathbb{Z}} (-1)^{k}\sum_{V} \ell_V\big(H^k\big(\Psi(S_\rho)\big)\big)[\mathcal{O}_{V}] 
 \end{equation}
  for $\rho\neq \rho_0$, where the second sum runs over all irreducible surfaces $V$ in the support of $H^k(\Psi(S_\rho))$, and where $\ell_V(-)$ computes the length of the stalk of the sheaf at the generic point of $V$.   \end{proposition}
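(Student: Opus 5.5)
The plan is to obtain this as a consequence of \cite[Theorem~1.1]{C21}, after checking that its hypotheses hold in the present setting, and to spell out the two ingredients on which it rests.

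\emph{Basis statement.} The perfect pairing $\chi\colon K(Y)_{\QQ}\times K_0(Y)_{\QQ}\to\QQ$ of \cite[Proposition~5.1]{CI04}, together with $K_0(Y)_{\QQ}=\bigoplus_\rho \QQ[\Psi(S_\rho)]$, gives the starting point. We have $F^1=F_0^\perp$, and \cite[Corollary~5.2]{CI04} identifies $F^1$ with $\Theta$, the annihilator of $v$. On the $K_0$ side, $[\mathcal{O}_y]=[\Psi(V_y)]=\sum_\rho(\dim\rho)[\Psi(S_\rho)]$, and $F_0$ is one-dimensional because $\tau^{-1}(\pi(0))$ is connected and all points of $Y$ are numerically equivalent. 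Hence $F_0=\QQ\sum_\rho(\dim\rho)[\Psi(S_\rho)]$. Since the coefficient of $[\Psi(S_{\rho_0})]$ in this generator is $1$, the classes $[\Psi(S_\rho)]$ for $\rho\neq\rho_0$ project to a basis of $F_2/F_0$.

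\emph{Formula for $G_C$.} By construction, the dual of the short exact sequence \eqref{eqn:sesPic} identifies $F_2/F_0=(F^1)^*$ and $F_2/F_1=(F^2)^*$. Under these identifications, $G_C$ is the natural quotient map, so we only need to compute the image of $[\Psi(S_\rho)]$ in $F_2/F_1$.

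\begin{enumerate}
\item Expand in the Grothendieck group: $[\Psi(S_\rho)]=\sum_k(-1)^k[H^k(\Psi(S_\rho))]$.
\item For each coherent sheaf $\mathcal{F}$ supported in $\tau^{-1}(\pi(0))$, use the filtration by support to write $[\mathcal{F}]\equiv\sum_V \ell_V(\mathcal{F})[\mathcal{O}_V]$ modulo $F_1$, where $V$ runs over the irreducible surfaces in $\supp\mathcal{F}$. This is the standard dévissage: filter $\mathcal{F}$ so that the successive quotients are sheaves on integral subschemes. Those supported on surfaces $V$ are torsion-free rank-one-type pieces whose classes agree with $[\mathcal{O}_V]$ modulo $F_1$, and there are exactly $\ell_V(\mathcal{F})$ of them.
\item Since $\{[\mathcal{O}_{E_i}]\}$ is a basis of $F_2/F_1$, the result is \eqref{eqn:GCintro}.
\end{enumerate}

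The main obstacle is the compatibility of the sign conventions and identifications in \cite{C21}. One must confirm that the Gale dual defined via \eqref{eqn:sesPicdual} really is the quotient $F_2/F_0\to F_2/F_1$, and not its negative or a twist by duality on $Y$. I would verify this by pairing with $F^1$ and checking that the annihilator of $F^2$ inside $F_2$ is $F_1$, using $F^2=F_1^\perp$. Granting this, the remainder of the argument is formal.
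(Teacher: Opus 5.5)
Your proposal is correct and follows the paper's route. The paper proves the basis statement exactly as you do, by quotienting $\bigoplus_\rho\QQ[\Psi(S_\rho)]$ by the line spanned by $\sum_\rho(\dim\rho)[\Psi(S_\rho)]$ and using $\dim\rho_0=1$; it obtains the formula for $G_C$ by citing \cite[Theorem~1.1]{C21}, and your duality-plus-d\'evissage argument is a faithful unpacking of that citation. The sign issue you flag is harmless: $G_C$ is the transpose of the inclusion $\ker(L_C)=F^2\hookrightarrow F^1$, so it does not depend on the sign convention for $L_C$.
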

  
  \begin{proof}
 Apply \cite[Theorem~1.1\one]{C21} and extend scalars over $\QQ$ to see that $F_2/F_0$ is isomorphic to the quotient of the vector space $\bigoplus_{\rho\in \Irr(G)} \QQ[\Psi(S_\rho)]$ by the 1-dimensional vector subspace spanned by the element $\sum_{\rho\in \Irr(G)} (\dim\rho) [\Psi(S_\rho)]$. Since $\dim \rho_0=1$, we may use the relation 
 \[
 \sum_\rho (\dim \rho) [\Psi(S_\rho)]=0
 \]
 to solve for the class $[\Psi(S_{\rho_0})]$.  This proves the first statement, while the second statement follows immediately from \cite[Theorem~1.1\two]{C21} after extending scalars over $\QQ$.
  \end{proof}

  List the irreducible components of the exceptional locus of  $\tau\colon Y\to \mathbb{C}^3/G$ as $E_1, \dots, E_m$, where $E_1, \dots, E_l$ are proper for some $\ell\leq m$, and $E_{l+1}, \dots , E_m$ are not proper. The following result is well known.

  \begin{lemma}
 The surface $E_i$ in $Y$ is proper if and only if $E_i\subseteq \tau^{-1}(\pi(0))$.
  \end{lemma}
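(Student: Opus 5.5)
The plan is to prove the two implications separately. Throughout, $E_i$ is an irreducible surface contained in the exceptional locus of $\tau$, and $\tau(E_i)$ is an irreducible closed subset of $\CC^3/G$.

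\emph{Proper implies contained in the fibre.} If $E_i$ is proper, then $\tau(E_i)$ is a proper, irreducible, closed subvariety of the affine variety $\CC^3/G$, so it is a single point $t$. To see that $t=\pi(0)$, use the $\CC^*$-action on $\CC^3$ by scalars. This action commutes with $G$, so it induces compatible actions on $Y$ and on $\CC^3/G$, and $\tau$ is equivariant, exactly as in the proof of Lemma~\ref{lem:curveintauinverse0}. The exceptional locus is $\CC^*$-stable, and $\CC^*$ is connected, so it preserves each irreducible component $E_i$. Hence $\lambda\cdot t=t$ for all $\lambda\in\CC^*$. Every $\CC^*$-orbit in $\CC^3/G$ has $\pi(0)$ in its closure, since $\lambda\cdot\pi(x)=\pi(\lambda x)\to\pi(0)$ as $\lambda\to 0$. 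Therefore the only fixed point is $\pi(0)$, and $t=\pi(0)$. This gives $E_i\subseteq\tau^{-1}(\pi(0))$.

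\emph{Contained in the fibre implies proper.} If $E_i\subseteq\tau^{-1}(\pi(0))$, then $E_i$ is a closed subscheme of the fibre of the projective morphism $\tau$ (Theorem~\ref{thm:BKR}) over the point $\pi(0)$. That fibre is a projective scheme over $\CC$, so its closed subscheme $E_i$ is proper.

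The only step needing care is the $\CC^*$-invariance of each individual component $E_i$. I would handle it by noting that $\CC^*$ is connected, so it cannot permute the finitely many irreducible components of the invariant exceptional locus. Everything else in the argument is formal.
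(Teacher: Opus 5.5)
Your proof is correct. The easy direction (contained in the fibre implies proper) is the same as the paper's. For the converse, you and the paper both use affineness of $\CC^3/G$ to get $\tau(E_i)=t$, a single point, and then exploit the scalar $\CC^*$-action, but the decisive step differs.

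The paper argues by contradiction via birationality. If $t\neq\pi(0)$, the translates $\lambda\cdot E_i$ lie over the distinct points $\lambda\cdot t$. Then $\CC^*\cdot E_i$ is a three-dimensional subset of $Y$ on which $\tau$ has two-dimensional fibres, which is impossible for a birational morphism from the irreducible threefold $Y$.

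You instead show that $E_i$ is itself $\CC^*$-stable, since a connected group cannot move an irreducible component of the invariant exceptional locus. Hence $t$ is a $\CC^*$-fixed point, and $\pi(0)$ is the only fixed point because it lies in the closure of every orbit.

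The paper's version never uses that $E_i$ is a component of an invariant closed set, so it applies directly to any proper surface in $Y$. Yours trades the dimension count against birationality for the finiteness of the set of components, and it also shows that each $E_i$ is $\CC^*$-invariant.
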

  \begin{proof}
    The fibre $\tau^{-1}(\pi(0))$ is proper, and so is each closed subvariety $E_i$ in $\tau^{-1}(\pi(0))$.  Conversely, since $\mathbb{C}^3/G$ is affine, the image of the proper irreducible subvariety $E_i$ under the proper morphism $\tau$ is a closed point  $t \in \CC^3/G$.
    Suppose that $t \ne \pi(0)$. 
    The $\mathbb{C}^*$-equivariance of $\tau : Y \to \mathbb{C}^3/G$ 
    implies that
    the $\mathbb{C}^*$-orbit
    $\mathbb{C}^* \cdot E_i \subset Y$ has dimension three, and that
    the surfaces $\lambda \cdot E_i$
    with $\lambda \in \mathbb{C}^*$ are all contracted by $\tau$.
    This contradicts the fact that $\tau$ is birational, so $E_i\subseteq \tau^{-1}(\pi(0))$ after all.
  \end{proof}

 \begin{lemma}
 \label{lem:F2/F1basis}
  The classes $[\mathcal{O}_{E_1}], \dots , [\mathcal{O}_{E_l}]$ provide a basis for the rational vector space $F_2/F_1$.
  \end{lemma}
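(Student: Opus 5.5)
We have to show that the classes $[\mathcal{O}_{E_1}],\dots,[\mathcal{O}_{E_l}]$ span $F_2/F_1$ and are linearly independent there.

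\emph{Spanning.} By definition, $F_2=K_0(Y)_{\QQ}$ is spanned by classes of coherent sheaves $\mathcal{F}$ supported in $\tau^{-1}(\pi(0))$. Since $\tau$ is birational, the fibre $\tau^{-1}(\pi(0))$ has dimension at most two, and its two-dimensional irreducible components are exceptional, proper surfaces. By the preceding lemma these are exactly $E_1,\dots,E_l$. For such an $\mathcal{F}$, take a filtration whose subquotients are of the form $\mathcal{O}_V$-modules for integral $V\subseteq\tau^{-1}(\pi(0))$, or equivalently use d\'evissage. Modulo sheaves supported in dimension at most one, this gives
\[
[\mathcal{F}] \equiv \sum_{i=1}^{l}\ell_{E_i}(\mathcal{F})\,[\mathcal{O}_{E_i}] \pmod{F_1},
\]
where $\ell_{E_i}$ is the length of the stalk at the generic point of $E_i$. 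The underlying fact is that a torsion-free rank-$r$ sheaf on $E_i$ agrees with $\mathcal{O}_{E_i}^{\oplus r}$ up to sheaves with smaller support. Hence the $[\mathcal{O}_{E_i}]$ span $F_2/F_1$.

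\emph{Independence.} The same formula shows that the map
\[
\ell=(\ell_{E_1},\dots,\ell_{E_l})\colon F_2\longrightarrow\QQ^l
\]
is additive on short exact sequences, because length at a generic point is additive. It is therefore well defined on $K_0(Y)_{\QQ}$, and it vanishes on $F_1$. Since $\ell([\mathcal{O}_{E_i}])$ is the $i$th standard basis vector, the classes $[\mathcal{O}_{E_i}]$ are linearly independent in $F_2/F_1$.

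\emph{Main obstacle.} The delicate point is to verify that $\ell$ genuinely descends to the Grothendieck group with $\QQ$-coefficients, that is, that it is compatible with all relations. This holds by exactness of localisation at the generic point of $E_i$.

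Alternatively, independence can be checked with the perfect pairing $\chi$ and the identification $F^2=F_1^\perp$, which gives $F_2/F_1\cong (F^1/F^2)^*\cong\Pic(Y)_{\QQ}^*$. One then pairs the $[\mathcal{O}_{E_i}]$ against divisor classes; for example, the divisors $\mathcal{O}_Y(E_j)$ give the intersection numbers $E_i\cdot E_j\cdot H$ with an ample class $H$. This route is harder, so I would use the length argument.
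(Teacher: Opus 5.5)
Your proof is correct and follows essentially the same route as the paper. The paper also gets spanning from the congruence $[\mathcal{F}]\equiv\sum_{i}\ell_{E_i}(\mathcal{F})[\mathcal{O}_{E_i}]$ modulo $F_1$, citing SGA6 for the d\'evissage you sketch. It gets independence from the fact that each $\ell_{E_i}$ extends to a homomorphism $K_0(Y)_{\QQ}\to\QQ$ vanishing on $F_1$, citing Fulton's Example~18.3.11, which is exactly your exactness-of-localisation argument.
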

  \begin{proof}
  For any sheaf $F$ on $Y$ whose support lies in $\tau^{-1}(\pi(0))$, its class in $K_0(Y)$ satisfies 
  \[
  [F] = \sum_{1\leq i\leq l} \ell_{E_i}(F)[\mathcal{O}_{E_i}] \mod F_1
  \]
  by \cite[Expose X, Proposition~1.1.2]{SGA6}, so the classes $[\mathcal{O}_{E_1}], \dots , [\mathcal{O}_{E_l}]$ span $F_2/F_1$. 
  To show linear independence, recall from \cite[Example~18.3.11]{Fulton98} that for any integral subscheme $Z \subset \tau^{-1}(\pi(0))$ of maximal dimension $2$, the length
  $\ell_Z(\mathcal{F})$ of a coherent sheaf $\mathcal{F}$ supported on $\tau^{-1}(\pi(0))$
  along the generic point of $Z$
  extends to a group homomorphism
  \[
  \ell_Z \colon K_0(Y)_\QQ \to \QQ.
  \]
  Suppose that $\alpha = \sum_i a_i [\mathcal{O}_{E_i}] \in F_1$.
  Then $a_i = \ell_{E_i}(\alpha) = 0$
  for all $1\leq i\leq l$ as required.
  \end{proof}

 \begin{proof}[Proof of Theorem~\ref{thm:signcoherentintro}]
  Theorem~\ref{thm:mainintro} provides a unique value $k\in \{-1,0\}$ that depends on $\rho$ for which $H^k(\Psi(S_\rho))\neq 0$, so the first sum from \eqref{eqn:GCintro} collapses. Expanding in terms of the basis of $F_2/F_1$ from  Lemma~\ref{lem:F2/F1basis} gives
   \begin{equation}
     \label{eqn:GCbasis}
G_{\chamber}\big([\Psi(S_\rho)]\big) = \left\{\begin{array}{rr}
\sum_{1\leq i\leq l} \ell_{E_i}\big(H^0\big(\Psi(S_\rho)\big)\big)[\mathcal{O}_{E_i}]  & \text{if }k=0; \\
-\sum_{1\leq i\leq l} \ell_{E_i}\big(H^{-1}\big(\Psi(S_\rho)\big)\big)[\mathcal{O}_{E_i}]  & \text{if }k=-1,
\end{array}\right.
 \end{equation}
  for each $\rho\neq \rho_0$. As a result, the $[\Psi(S_\rho)]$-column of the matrix representating $G_C$ in the chosen bases therefore satisfies either:
 \begin{enumerate}
 \item[\ensuremath{(+)}] every nonzero entry is positive; this happens if and only if $k=0$ and the support of $\Psi(S_\rho)$ has dimension two;
 \item[\ensuremath{(0)}] the column is the zero vector; this happens if and only if the support of $\Psi(S_\rho)$ has dimension one; or
  \item[\ensuremath{(-)}] every nonzero entry is negative; this happens if and only if $k=-1$ and the support of $\Psi(S_\rho)$ has dimension two.
 \end{enumerate}
 Therefore the matrix representing $G_C$ is sign coherent. \end{proof}

 \begin{example}[\textbf{Dihedral case}]
 \label{exa:BentoBox}
 Let $G\subset \SL(3,\CC)$ be the dihedral group of order 24 generated by 
 \[
 \begin{pmatrix} e^{\pi i/6} & 0 & 0 \\ 
 0 & e^{7\pi i/6} & 0 \\ 0 & 0 & e^{2\pi i/3} \end{pmatrix}\quad\text{and}\quad
  \begin{pmatrix} 
 0 & 1 & 0 \\ 
 -1 & 0 & 0 \\ 
 0 & 0 & 1
 \end{pmatrix}.
 \]
  The McKay quiver $Q$ was computed by  
 Nolla de Celis~\cite[Figure~5]{NolladeCelis21}, and the first author made many predictions in \cite[Example~6.5]{C21}, all of which are valid in light of Theorems~\ref{thm:mainintro}, \ref{thm:signcoherentintro} and \ref{thm:BCQconjectureintro}.  In particular, the sign coherent matrix from Theorem~\ref{thm:signcoherentintro} is shown in (6.5) of \emph{loc.~cit.}, and the support of each object $\Psi(S_\rho)$ is computed explicitly in cases $(+)$ and $(-)$ described in \emph{loc.~cit.}. In addition, in case $(0)$ of the same example,  Theorem~\ref{thm:BCQconjectureintro} gives that 
 \[
 \Psi(S_\rho)\cong \det(\mathcal{R}_\rho)^{-1}\vert_{Z_\rho}
 \]
 for the representations $\rho\in \{\rho_1, \rho_{4^-}, \rho_{10^+}, \rho_{10^-}\}$, where each $Z_\rho$ is a nonsingular rational curve. 
  \end{example}

 \begin{example}[\textbf{Trihedral case}]
 \label{exa:trihedralNdC}
 Consider the trihedral subgroup $G\subset \SL(3,\CC)$ obtained from the cyclic group action of type $\frac{1}{13}(1,3,9)$; more explicitly, $G$ is the group
 of order 39 generated by
 \[
 \begin{pmatrix} e^{2 \pi i/13} & 0 & 0 \\ 
 0 & e^{6 \pi i/13} & 0 \\ 0 & 0 & e^{18 \pi i/13} \end{pmatrix}\quad\text{and}\quad
  \begin{pmatrix} 
 0 & 1 & 0 \\ 
 0 & 0 & 1 \\ 
 1 & 0 & 0
 \end{pmatrix}.
 \]
 Following Nolla de Celis~\cite[Section~4]{NolladeCelis21}, we list the 7 irreducible representations as
 \begin{equation}
    \label{eqn:irreps}
 \{\rho_0, \rho_{0}^\prime, \rho_{0}^{\prime\prime}, V_{1}, V_2, V_4, V_7\},
  \end{equation}
 where $\rho_{0}^\prime, \rho_{0}^{\prime\prime}$ are nontrivial of dimension one, and each $V_i$ has dimension three.
 
 The exceptional fibre $\tau^{-1}(\pi(0))$ in $Y=\ghilb(\CC^3)$ consists of two proper divisors $E_1$ and $E_2$. List the indecomposable summands of the tautological bundle as $\mathcal{R}_{0}, \mathcal{R}_{0}^\prime, \mathcal{R}_{0}^{\prime\prime}, \mathcal{R}_{1}, \mathcal{R}_2, \mathcal{R}_4, \mathcal{R}_7$ with $\mathcal{R}_{0}\cong \mathcal{O}_Y$. The key calculation of Nolla de Celis~\cite[Proposition~2]{NolladeCelis21} associates one relation between the determinants of the tautological bundles for each surface $E_1$ and $E_2$, namely
   \begin{align}
 \mathcal{R}_0^\prime\otimes \mathcal{R}_0^{\prime\prime} & \cong\det(\mathcal{R}_4)   & \text{for }E_1, \label{eqn:rels1} \\
 \det(\mathcal{R}_1) & \cong  \det(\mathcal{R}_2) & \text{for }E_2. \label{eqn:rels2} 
 \end{align}
 In addition, the line bundles $\det(\mathcal{R}_{2}), \det(\mathcal{R}_4), \det(\mathcal{R}_7)$ are independent in $\Pic(Y)$, and we extend this to a $\ZZ$-basis by breaking symmetry in equation \eqref{eqn:rels1} by adding $\mathcal{R}_0^\prime$ as the first basis element.

 To compute the matrix defining the linearisation map $L_C$ as in \cite[Definition~6.1]{C21}, list the basis elements of $\Theta$ for $\rho\neq \rho_{0}$ in the same order as in \eqref{eqn:irreps} and then use \eqref{eqn:rels1}-\eqref{eqn:rels2} to compute that 
 \[
\setlength{\arraycolsep}{2pt}
   \renewcommand{\arraystretch}{0.8}
L_C=\mbox{\footnotesize$\left(\begin{array}{crcccc}
1 & -1 & 0 & 0 & 0 & 0  \\
0 & 0 & 1 & 1 & 0 & 0  \\
0 & 1 & 0 & 0 & 1 & 0  \\
0 & 0 & 0 & 0 & 0 & 1    
\end{array}\right).$}
\]
For example, the second column expresses $\mathcal{R}_0^{\prime\prime}$ as $\det(\mathcal{R}_4)\otimes (\mathcal{R}_0^{\prime})^{-1}$. The exponents on the relations \eqref{eqn:rels1} and \eqref{eqn:rels2} determine the first and second columns of the matrix defining the inclusion of $\ker(L_C)$ in $\Theta$, and the transpose of that matrix, namely 
 \begin{equation}
    \label{eqn:Ktbento}
\setlength{\arraycolsep}{2pt}
   \renewcommand{\arraystretch}{0.8}
G_C=\mbox{\footnotesize$\left(\begin{array}{rrcrrc}
1 & 1 &  0 &  0 & -1  & 0 \\
0 & 0 &  1 & -1 & 0  & 0  
\end{array}\right)$}
\end{equation}
 defines the Gale dual map expressed in the basis $\{[\mathcal{O}_{E_1}], [\mathcal{O}_{E_2}]\}$ of $F_2/F_1$. Note that $G_C$ is sign-coherent as in Theorem~\ref{thm:signcoherentintro}. The signs in the columns of $G_C$ characterise the trichotomy:

 \begin{enumerate}
 \item[\ensuremath{(+)}] The columns of $G_C$ containing a positive entry are indexed by representations $\rho_{0}^\prime, \rho_{0}^{\prime\prime}, V_1$ of type \ensuremath{(+)}. The positive entries in the columns for $\rho_{0}^\prime, \rho_{0}^{\prime\prime}$ appear in the first row, so Proposition~\ref{prop:GaleDual} shows that 
 $\Psi(S_{0^\prime}), \Psi(S_{0^{\prime\prime}})$ are sheaves with support $Z_{\rho_0^\prime}=Z_{\rho_0^{\prime\prime}}=E_1$, while the positive entry for the column $V_1$ is in row two, so the sheaf $\Psi(S_{V_1})$ has support $Z_{V_1}=E_2$. 
 \item[\ensuremath{(0)}] The column of $G_C$ that is the zero vector indicates that $\Psi(S_{V_7})\in \ker(G_C)=F_1/F_0$, that is, $\Psi(S_{V_7})$ is supported on curves. In fact, \cite[Figure~11]{NolladeCelis21} shows that $\mathcal{R}_7$ is the unique tautological bundle that has nonzero degree on the curve denoted $C_2$; this agrees with Lemma~\ref{lem:deg0or1}. In particular, Proposition~\ref{prop:contractP1} implies that
 \[
 \Psi(S_{V_7})\cong \det(\mathcal{R}_7)^{-1}\vert_{C_2}\cong\mathcal{O}_{C_2}(-1)
 \]
 as in  Theorem~\ref{thm:mainintro}\ensuremath{(0)}.
 \item[\ensuremath{(-)}]  The columns of $G_C$ containing a negative entry are indexed by representations $V_2, V_4$ of type \ensuremath{(-)}. Again, by examining the rows containing the negative entries, Proposition~\ref{prop:GaleDual} implies that the sheaves $\Psi(S_{V_2})[-1]$ and $\Psi(S_{V_4})[-1]$ have support equal to $E_2$ and $E_1$ respectively.
 \end{enumerate}
 
 In addition, the `Socle' column of \cite[Table~10]{NolladeCelis21} shows that  neither $V_2$ nor $V_4$ lies in the socle of any $G$-cluster, so the coordinate hyperplanes $(\theta_\rho=0)$ for  $\rho\in \{V_2, V_4\}$ intersect the GIT chamber $C$, which is consistent with Theorem~\ref{thm:mainintro}\ensuremath{(-)}. Conversely, the representations $\rho_{0}^\prime, \rho_{0}^{\prime\prime}$ lie in the socle of $G$-clusters parametrised by $E_1$, the representation $V_{1}$ lies in the socle of $G$-clusters parametrised by $E_2$, and 
 $V_{7}$ lies in the socle of the $G$-clusters parametrised by the curve $C_2$ from \cite[Figure~9]{NolladeCelis21}, so 
 the coordinate hyperplanes  $(\theta_\rho=0)$ for each $\rho\in \{\rho_{0}^\prime, \rho_{0}^{\prime\prime}, V_1, V_7\}$ provide supporting hyperplanes for the chamber $C$. These calculations are consistent with cases $(+)$ and $(0)$ of Theorem~\ref{thm:mainintro}.
 
\end{example}

 \small{
\bibliographystyle{plain}


\begin{thebibliography}{10}

\bibitem{SGA6}
{\em Th\'eorie des intersections et th\'eor\`eme de {R}iemann-{R}och}.
\newblock Lecture Notes in Mathematics, Vol. 225. Springer-Verlag, Berlin-New
  York, 1971.
\newblock S{\'e}minaire de G{\'e}om{\'e}trie Alg{\'e}brique du Bois-Marie
  1966--1967 (SGA 6), Dirig{\'e} par P. Berthelot, A. Grothendieck et L.
  Illusie. Avec la collaboration de D. Ferrand, J. P. Jouanolou, O. Jussila, S.
  Kleiman, M. Raynaud et J. P. Serre.

\bibitem{AW98}
M.~Andreatta and J.~Wi{\'s}niewski.
\newblock On contractions of smooth varieties.
\newblock {\em J. Algebr. Geom.}, 7(2):253--312, 1998.

\bibitem{BCZ17}
A.~Bayer, A.~Craw, and Z.~Zhang.
\newblock Nef divisors for moduli spaces of complexes with compact support.
\newblock {\em Selecta Math. (N.S.)}, 23(2):1507--1561, 2017.

\bibitem{BCQ15}
R.~Bocklandt, A.~Craw, and A.~Quintero~V\'{e}lez.
\newblock Geometric {R}eid's recipe for dimer models.
\newblock {\em Math. Ann.}, 361(3-4):689--723, 2015.

\bibitem{BCQ21}
R.~Bocklandt, A.~Craw, and A.~Quintero~V\'{e}lez.
\newblock Correction to: {G}eometric {R}eid's recipe for dimer models.
\newblock {\em Math. Ann.}, 380(1-2):911--913, 2021.

\bibitem{BKR01}
T.~Bridgeland, A.~King, and M.~Reid.
\newblock The {M}c{K}ay correspondence as an equivalence of derived categories.
\newblock {\em J. Amer. Math. Soc.}, 14(3):535--554, 2001.

\bibitem{BM02}
T.~Bridgeland and A.~Maciocia.
\newblock Fourier-{M}ukai transforms for {$K3$} and elliptic fibrations.
\newblock {\em J. Algebraic Geom.}, 11(4):629--657, 2002.

\bibitem{CCL17}
S.~Cautis, A.~Craw, and T.~Logvinenko.
\newblock Derived {R}eid's recipe for abelian subgroups of {${\rm
  SL}_3(\mathbb{C})$}.
\newblock {\em J. Reine Angew. Math.}, 727:1--48, 2017.

\bibitem{CL09}
S.~Cautis and T.~Logvinenko.
\newblock A derived approach to geometric {M}c{K}ay correspondence in dimension
  three.
\newblock {\em J. Reine Angew. Math.}, 636:193--236, 2009.

\bibitem{CL14}
S.~Cautis and T.~Logvinenko.
\newblock Erratum to ``{A} derived approach to geometric {M}c{K}ay
  correspondence in dimension three'' ({J}. reine angew. {M}ath. 636 (2009),
  193--236).
\newblock {\em J. Reine Angew. Math.}, 689:243--244, 2014.

\bibitem{Crawthesis}
A.~Craw.
\newblock The {M}c{K}ay correspondence and representations of the {M}c{K}ay
  quiver, 2001.
\newblock PhD thesis, University of Warwick, Available from
  \url{https://sites.google.com/view/alastair-craw/}.

\bibitem{C21}
A.~Craw.
\newblock Gale duality and the linearisation map for noncommutative crepant
  resolutions, 2021.
\newblock In submission, available from \url{https://arxiv.org/pdf/2109.09565}.

\bibitem{CI04}
A.~Craw and A.~Ishii.
\newblock Flops of {$G$}-{H}ilb and equivalences of derived categories by
  variation of {GIT} quotient.
\newblock {\em Duke Math. J.}, 124(2):259--307, 2004.

\bibitem{CIK18}
A.~Craw, Y.~Ito, and J.~Karmazyn.
\newblock Multigraded linear series and recollement.
\newblock {\em Math. Z.}, 289(1-2):535--565, 2018.

\bibitem{CQV15}
A.~Craw and A.~Quintero~V\'elez.
\newblock Cohomology of wheels on toric varieties.
\newblock {\em Hokkaido Math. J.}, 44(1):47--79, 2015.

\bibitem{Fulton98}
W.~Fulton.
\newblock {\em Intersection Theory}, volume~2 of {\em Ergebnisse der Mathematik
  und ihrer Grenzgebiete, 3. Folge}.
\newblock Springer-Verlag, 2 edition, 1998.

\bibitem{GroHilb}
A.~Grothendieck.
\newblock Techniques de construction et th{\'e}oremes d'existence en
  g{\'e}om{\'e}trie alg{\'e}brique. {IV}: {Les} schemas de {Hilbert}.
\newblock \emph{Sem. {Bourbaki}} 13(1960/61), {No}. 221, 28 p., 1961.

\bibitem{GM02}
J.~Guo and R.~Mart{\'i}nez-Villa.
\newblock Algebra pairs associated to {McKay} quivers.
\newblock {\em Comm. Algebra}, 30(2):1017--1032, 2002.

\bibitem{HW81}
F.~Hidaka and K.~Watanabe.
\newblock Normal {Gorenstein} surfaces with ample anti-canonical divisor.
\newblock {\em Tokyo J. Math.}, 4:319--330, 1981.

\bibitem{IU16}
A.~Ishii and K.~Ueda.
\newblock Dimer models and crepant resolutions.
\newblock {\em Hokkaido Math. J.}, 45(1):1--42, 2016.

\bibitem{ItoNakajima00}
Y.~Ito and H.~Nakajima.
\newblock Mc{K}ay correspondence and {H}ilbert schemes in dimension three.
\newblock {\em Topology}, 39(6):1155--1191, 2000.

\bibitem{ItoNakamura99}
Y.~Ito and I.~Nakamura.
\newblock Hilbert schemes and simple singularities.
\newblock In {\em New trends in algebraic geometry ({W}arwick, 1996)}, volume
  264 of {\em London Math. Soc. Lecture Note Ser.}, pages 151--233. Cambridge
  Univ. Press, Cambridge, 1999.

\bibitem{KV00}
M.~Kapranov and E.~Vasserot.
\newblock Kleinian singularities, derived categories and {H}all algebras.
\newblock {\em Math. Ann.}, 316(3):565--576, 2000.

\bibitem{King94}
A.~King.
\newblock Moduli of representations of finite-dimensional algebras.
\newblock {\em Quart. J. Math. Oxford Ser. (2)}, 45(180):515--530, 1994.

\bibitem{Logvinenko10}
T.~Logvinenko.
\newblock Reid's recipe and derived categories.
\newblock {\em J. Algebra}, 324(8):2064--2087, 2010.

\bibitem{McKay80}
J.~McKay.
\newblock Graphs, singularities, and finite groups.
\newblock In {\em The {S}anta {C}ruz {C}onference on {F}inite {G}roups ({U}niv.
  {C}alifornia, {S}anta {C}ruz, {C}alif., 1979)}, volume~37 of {\em Proc.
  Sympos. Pure Math.}, pages 183--186. Amer. Math. Soc., Providence, R.I.,
  1980.

\bibitem{Mori82}
S.~Mori.
\newblock Threefolds whose canonical bundles are not numerically effective.
\newblock {\em Ann. of Math. (2)}, 116(1):133--176, 1982.

\bibitem{NolladeCelis21}
\'A. Nolla~de Celis.
\newblock On {R}eid's recipe for non-abelian groups.
\newblock In {\em Mc{K}ay correspondence, mutation and related topics},
  volume~88 of {\em Adv. Stud. Pure Math.}, pages 95--118. Math. Soc. Japan,
  Tokyo, [2023] \copyright 2023.

\bibitem{Reid79}
M.~Reid.
\newblock Canonical {$3$}-folds.
\newblock In {\em Journ\'ees de {G}\'eometrie {A}lg\'ebrique d'{A}ngers,
  {J}uillet 1979/{A}lgebraic {G}eometry, {A}ngers, 1979}, pages 273--310.
  Sijthoff \& Noordhoff, Alphen aan den Rijn---Germantown, Md., 1980.

\bibitem{Reid94}
M.~Reid.
\newblock Nonnormal del {P}ezzo surfaces.
\newblock {\em Publ. Res. Inst. Math. Sci.}, 30(5):695--727, 1994.

\bibitem{Stacks}
The {S}tacks~{P}roject {A}uthors.
\newblock Stacks {P}roject.
\newblock \url{https://stacks.math.columbia.edu/tag/0AVT}.

\bibitem{VdB04Duke}
M.~Van~den Bergh.
\newblock Three-dimensional flops and non-commutative rings.
\newblock {\em Duke Mathematical Journal}, 122(3):423--455, 2004.

\end{thebibliography}
}
\end{document}